\newtheorem{thm}[subsection]{Theorem}
\newtheorem{cor}[subsection]{Corollary}
\newtheorem{lemma}[subsection]{Lemma}
\newtheorem{remark}[subsection]{Remark}
\theoremstyle{definition}
\numberwithin{equation}{section}
\def\phi{{\varphi}}
\def\cA{{\mathcal A}}
\def\cB{{\mathcal B}}
\def\cC{{\mathcal C}}
\def\cD{{\mathcal D}}
\def\cF{{\mathcal F}}
\def\cH{{\mathcal H}}
\def\cI{{\mathcal I}}
\def\cN{{\mathcal N}}
\def\cO{{\mathcal O}}
\def\cS{{\mathcal S}}
\def\cV{{\mathcal V}}
\def\cW{{\mathcal W}}
\def\gg{{\mathfrak g}}
\def\gl{{\mathfrak l}}
\def\go{{\mathfrak o}}
\def\gp{{\mathfrak p}}
\def\gs{{\mathfrak s}}
\newfont{\german}{eufm10}
\begin{document}

\pagestyle{plain}

\title
{Orbifolds of Gaiotto-Rap\v{c}\'ak $Y$-algebras}

\author{Masoumah Al-Ali}
\address{Saudi Electronic University, Riyadh, Saudia Arabia}
\email{m.alali@seu.edu.sa}

\author{Andrew R. Linshaw}
\address{University of Denver}
\email{andrew.linshaw@du.edu}

\thanks{A. L. is supported by Simons Foundation Grant \#635650 and NSF Grant DMS-2001484.}

\begin{abstract} 
The universal two-parameter $\cW_{\infty}$-algebra is a classifying object for vertex algebras of type $\cW(2,3,\dots, N)$ for some $N$. Gaiotto and Rap\v{c}\'ak recently introduced a large family of such vertex algebras called $Y$-algebras, which includes many known examples such as the principal $\cW$-algebras of type $A$. These algebras admit an action of $\mathbb{Z}_2$, and in this paper we study the structure of their orbifolds. Aside from the extremal cases of either the Virasoro algebra or the $\cW_3$-algebra, we show that these orbifolds are generated by a single field in conformal weight $4$, and we give strong finite generating sets. \end{abstract}

\keywords{Vertex algebra, universal $\cW_{\infty}$-algebra, $\cW$-algebra, orbifold}
\maketitle

\section{Introduction}
Gaiotto and Rap\v{c}\'ak introduced an important family of vertex algebras $Y_{N_1, N_2, N_3}[\psi]$ called  $Y$-algebras, which are indexed by three integers $N_1, N_2, N_3\geq 0$ and a complex parameter $\psi$ \cite{GR}. These vertex algebras are associated to interfaces of twisted $\cN=4$ supersymmetric gauge theories with gauge groups $U(N_1)$,  $U(N_2)$, and $U(N_3)$. The interfaces satisfy a permutation symmetry which is expected to induce a corresponding symmetry on the vertex algebras. This led Gaiotto and Rap\v{c}\'ak to conjecture a triality of isomorphisms of $Y$-algebras. Define $\psi = -\frac{\epsilon_2}{\epsilon_1}$ where $\epsilon_1 + \epsilon_2 +\epsilon_3 =0$,
and set 
$$ Y^{\epsilon_1, \epsilon_2, \epsilon_3}_{N_1, N_2, N_3} := Y_{N_1, N_2, N_3}[\psi].$$
In this, notation the triality conjecture is
\begin{equation} \label{GRconj}
Y^{\epsilon_{\sigma(1)}, \epsilon_{\sigma(2)}, \epsilon_{\sigma(3)}}_{N_{\sigma(1)}, N_{\sigma(2)}, N_{\sigma(3)}} \cong Y^{\epsilon_1, \epsilon_2, \epsilon_3}_{N_1, N_2, N_3},\  \text{for} \ \sigma \in S_3.\end{equation}
In the case when one of the labels $N_1, N_2, N_3$ is zero, $Y_{N_1, N_2, N_3}[\psi]$ is a simple vertex algebra, and it can be realized (up to a Heisenberg algebra) as a certain coset of an affine $\cW$-(super)algebra. In this case, the triality conjecture is equivalent to the statement that there are three different such realizations, and this was proven by Creutzig and the second author in \cite{CLIII}. 

More precisely, for $n\geq 1$, consider the $\cW$-algebras
$$\cW^k(\gs\gl_{n+m}, f_{n+m}),\qquad \psi = k+n+m,$$ where $f_{n+m}$ is the nilpotent element which is principal in $\gs\gl_n$ and trivial in $\gs\gl_m$. Note that $\cW^k(\gs\gl_{n+m}, f_{n,m})$ is a common generalization of the principal $\cW$-algebra $\cW^k(\gs\gl_n)$ (the case $m=0$), the affine vertex algebra $V^k(\gs\gl_{m+1})$ (the case $n=1$), the subregular $\cW$-algebra $\cW^k(\gs\gl_{n+1}, f_{\text{subreg}})$ (the case $m=1$), and the minimal $\cW$-algebra $\cW^k(\gs\gl_{m+2}, f_{\text{min}})$ (the case $n=2$). As in \cite{CLIII}, we replace $k$ with the shifted level $\psi = k+n+m$, and we set
$$\cW^{\psi}(n,m) := \cW^k(\mathfrak{sl}_{n+m}, f_{n+m}) =  \cW^{\psi - n-m}(\mathfrak{sl}_{n+m}, f_{n+m}).$$
For $m\geq 1$, $\cW^{\psi}(n,m)$ has affine vertex subalgebra
$$  \left\{
\begin{array}{ll}
V^{-\psi-m-1}(\gg\gl_m) = \cH \otimes V^{-\psi-m-1}(\gs\gl_m) & m\geq 2,
\smallskip
\\ \cH & m=1, \\
\end{array} 
\right.
$$ where $\cH$ is the Heisenberg algebra.
For $n=0$, a different definition of $\cW^{\psi}(0,m)$ was given \cite{CLIII}, and this is omitted for brevity. Consider the affine cosets
$$\cC^{\psi}(n, m) =   \left\{
\begin{array}{lll}
 \text{Com}(V^{\psi - m-1}(\mathfrak{gl}_m), \cW^{\psi}(n, m)) &  & m\geq 2, \ n\geq 0,
\smallskip
\\  \text{Com}(\cH, \cW^{\psi}(n, 1)) &  & m=1, \ n\geq 0,
\smallskip
\\ \cW^{\psi}(n, 0) &  & m = 0,\ n\geq 2.
\end{array} 
\right.$$
Up to a Heisenberg algebra, these cosets are just the above $Y$-algebras, that is,
\begin{equation}
\begin{split} 
Y_{0, M, N}[\psi] &= \cC^\psi(N-M, M) \otimes \cH , \qquad M \leq N, \\
Y_{0, M, N}[\psi] &= \cC^{-\psi+1}(M-N, N)  \otimes \cH, \qquad M > N. \\
\end{split} \end{equation}

Next, for $n\geq 1$ and $n\geq m$, there is a similar family of affine $\cW$-superalgebras 
$$\cW^k(\gs\gl_{n|m}, f_{n|m}),\qquad \psi = k+n+m,$$ where $f_{n|m}$ is the nilpotent element which is principal in $\gs\gl_n$ and trivial in $\gs\gl_m$. As above, $\cW^k(\gs\gl_{n|m}, f_{n|m})$ is a common generalization of the principal $\cW$-algebra $\cW^k(\gs\gl_n)$ (the case $m=0$), the affine vertex superalgebra $V^k(\gs\gl_{1|m})$ (the case $n=1$), the principal $\cW$-superalgebra $\cW^k(\gs\gl_{n|1}, f_{\text{subreg}})$ (the case $m=1$), and the minimal $\cW$-superalgebra $\cW^k(\gs\gl_{2|m}, f_{\text{min}})$ (the case $n=2$). 
In the case $n=m$, we need to take $\cW^k(\gp\gs\gl_{n|n}, f_{n|n})$ instead of $\cW^k(\gs\gl_{n|n}, f_{n|n})$ to get a simple algebra. Again, we replace the level $k$ with the shifted level $\psi = k+n-m$, and set
$$\cV^{\psi}(n,m) =  \left\{
\begin{array}{ll}
\cW^k(\gs\gl_{n|m}, f_{n|m}) & n\geq 1,\quad m \neq n,
\smallskip
\\ \cW^k(\gp\gs\gl_{n|n}, f_{n|n}) & n\geq 2,  \quad m = n. \\
\end{array} 
\right.
$$
For $m\geq 1$, $\cV^{\psi}(n,m)$ has affine vertex subalgebra
$$  \left\{
\begin{array}{ll}
V^{-\psi-m+1}(\gg\gl_m) = \cH \otimes V^{-\psi-m+1}(\gs\gl_m) & m\geq 2, \quad m \neq n,
\smallskip
\\ V^{-\psi-n+1}(\gs\gl_n)  & n\geq 2, \quad m = n,
\\ \cH & m=1. \\
\end{array} 
\right.
$$
The extremal cases $\cV^{\psi}(1,1)$, $\cV^{\psi}(1,0)$, and $\cV^{\psi}(0,m)$ are defined differently in \cite{CLIII}, and this is omitted for brevity. Define
\begin{equation*} \begin{split} &  \cD^{\psi}(n,m) = \left\{
\begin{array}{ll}
\text{Com}(V^{-\psi-m+1}(\gg\gl_m), \cV^{\psi}(n,m)) & n\geq 0, \quad m\geq 2, \quad m\neq n ,
\smallskip 
\\ \text{Com}(V^{-\psi-n+1}(\gs\gl_n), \cV^{\psi}(n,n))^{GL_1}  & n\geq 2, \quad m = n ,
\smallskip 
\\ \text{Com}(\cH, \cV^{\psi}(n,1)) & n \geq 2, \quad m=1,
\\  \cV^{\psi}(n,0)& n\geq 2, \quad m = 0.
\end{array} 
\right.
\end{split} \end{equation*}
Again, we omit the definition of the extremal cases $\cD^{\psi}(1,1)$, $\cD^{\psi}(1,0)$, and $\cD^{\psi}(0,1)$, which can be found in \cite{CLIII}. Note that $\cD^{\psi}(n,m)$ is just the affine coset for $n\neq m$, but in the case $n=m$ there is an action of $GL_1$ by outer automorphisms, and it is necessary to take the $GL_1$-orbifold of the affine coset. Up to a Heisenberg algebra, these cosets are just the above $Y$-algebras:
\begin{equation}
\begin{split} 
Y_{L, 0, N}[\psi] &= \cD^\psi(N, L) \otimes \cH, \\
Y_{L, M, 0}[\psi] &= \cD^{-\psi+1}(M, L) \otimes \cH.
\end{split}
\end{equation}

The main result of \cite{CLIII}, which is equivalent to the Gaiotto-Rap\v{c}\'ak triality conjecture in the case where one of the labels is zero, is the following.
\begin{thm} \label{CLIIImain} Let $n\geq  m \geq 0$ be non-negative integers. As one-parameter vertex algebras
$$\cD^\psi(n, m)  \cong \cC^{\psi^{-1}}(n-m, m) \cong \cD^{\psi'}(m, n), \qquad \psi' = \frac{1}{\psi} +\frac{1}{\psi'} =1.$$
\end{thm}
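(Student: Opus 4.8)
The plan is to realize every vertex algebra occurring in the statement as a quotient of the universal two-parameter $\cW_{\infty}$-algebra $\cW_{\infty}$. This algebra is freely generated of type $\cW(2,3,4,\dots)$ over a two-dimensional base, which may be coordinatized by the central charge $c$ and a second structure parameter $\lambda$ (controlling, for instance, the self-OPE of the weight-$3$ generator), and it has the classifying property that any simple one-parameter vertex algebra of type $\cW(2,3,\dots,N)$ whose weight-$2$ field is a Virasoro field and whose higher generators are primary is a quotient of a one-parameter specialization of $\cW_{\infty}$, such a specialization being determined by the associated rational curve $\psi\mapsto(c(\psi),\lambda(\psi))$. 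Granting this, the theorem becomes the assertion that the three families $\cD^{\psi}(n,m)$, $\cC^{\psi^{-1}}(n-m,m)$ and $\cD^{\psi'}(m,n)$ trace out the same curve after the indicated substitutions $\psi\mapsto\psi^{-1}$ and $\psi\mapsto\psi'=\psi/(\psi-1)$.

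\textbf{Step 1: the cosets are $\cW_{\infty}$-quotients.} First I would combine the structure theory of the hook-type $\cW$-(super)algebras $\cW^{k}(\gs\gl_{n+m},f_{n+m})$ and $\cW^{k}(\gs\gl_{n|m},f_{n|m})$ with general finite-generation results for cosets to show that, for generic $\psi$, the algebras $\cC^{\psi}(n-m,m)$ and $\cD^{\psi}(n,m)$ are strongly generated by a single field in each weight $2,3,4,\dots$; here the affine subalgebra has been removed by the coset, and when $n=m$ the residual $GL_1$-symmetry by the orbifold built into the definition of $\cD$. One then checks that the weight-$2$ field is Virasoro, that the remaining generators can be chosen primary, and that the algebra is simple for generic $\psi$. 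By the classifying property this produces surjections onto each of the three families from one-parameter specializations of $\cW_{\infty}$, and hence a rational curve in the $(c,\lambda)$-plane attached to each.

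\textbf{Step 2: computing $c$ and $\lambda$.} Next I would compute these two invariants as explicit rational functions of $\psi$. The central charge comes from the Kac--Wakimoto central-charge formula for $\cW^{k}(\gs\gl_{n+m},f_{n+m})$, and for its super analogue, minus the central charge of the affine subalgebra ($V(\gg\gl_m)$, or $\cH$, or $V(\gs\gl_n)$ when $n=m$); this part is routine. The parameter $\lambda$ is the delicate one: it is encoded in a single structure constant among the weight $2,3,4$ generators of the coset, and since those generators admit no closed-form description, a direct OPE computation is infeasible. Instead I would use that $\lambda$ is a rational function of $\psi$ of controlled degree and determine it from finitely many specializations — collapsing levels at which the coset degenerates to a recognizable vertex algebra (a lattice-type or affine-type coset, or a principal $\cW$-algebra when $m=0$, via Feigin--Frenkel duality), together with the low-rank anchors $n=1$ and $m=1$, where $\cV^{\psi}$ and $\cW^{\psi}$ are affine or minimal $\cW$-(super)algebras and $\lambda$ can be extracted directly. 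These anchor values, transported by the triality relations themselves, then pin down $\lambda(\psi)$ for all $n$ and $m$.

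\textbf{Step 3: matching the curves and the endgame.} It then remains to verify the identity
\[
\bigl(c_{\cD(n,m)}(\psi),\,\lambda_{\cD(n,m)}(\psi)\bigr)=\bigl(c_{\cC(n-m,m)}(\psi^{-1}),\,\lambda_{\cC(n-m,m)}(\psi^{-1})\bigr)=\bigl(c_{\cD(m,n)}(\psi'),\,\lambda_{\cD(m,n)}(\psi')\bigr)
\]
as an equality of rational functions of $\psi$ (with $\psi'=\psi/(\psi-1)$), which is a finite algebraic check once the formulas of Step 2 are in hand. Since a one-parameter family of the above type is recovered from its curve — the generic member being the simple quotient of the corresponding $\cW_{\infty}$-quotient — the coincidence of the three curves yields the claimed isomorphisms of the simple one-parameter vertex algebras for generic $\psi$, and therefore, the matching being an identity of rational functions, an isomorphism of one-parameter vertex algebras over a suitable localization of $\C[\psi]$. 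The extremal and exceptional members — $\cD^{\psi}(1,1)$, $\cD^{\psi}(1,0)$, $\cD^{\psi}(0,1)$, $\cC^{\psi}(0,m)$, and the Virasoro and $\cW_3$ endpoints — would be dealt with by separate direct identifications. I expect the main obstacle to be Step 2, namely obtaining the closed form for $\lambda$: the weight-$3$ and weight-$4$ generators of the coset are not accessible by direct computation, so one must locate enough independent, computable degenerations of the family to determine the rational function; by contrast Steps 1 and 3 are structural input together with routine algebra.
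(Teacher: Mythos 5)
Your outline is correct and follows essentially the same strategy as the actual proof, which appears in \cite{CLIII} (the present paper only quotes the theorem): realize each coset as a one-parameter quotient of $\cW(c,\lambda)$ via strong finite generation of type $\cW(2,3,\dots,N)$, compute the truncation curve $\psi\mapsto(c(\psi),\lambda(\psi))$ — with $\lambda$ pinned down by its bounded degree as a rational function together with finitely many computable degenerations and low-rank anchors — and then verify that the three curves coincide under $\psi\mapsto\psi^{-1}$ and $\psi\mapsto\psi'$. Your identification of Step 2 (the closed form for $\lambda$) as the main technical obstacle is also accurate.
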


This theorem is a vast generalization of well-known results such as Feigin-Frenkel duality \cite{FFII} and the coset realization of principal $\cW$-algebras of type $A$ \cite{ACL}, which are the special cases $\cD^\psi(n, 0) \cong \cC^{\psi^{-1}}(n, 0)$ and $\cD^{\psi}(n, 0) \cong \cD^{\psi'}(0, n)$, respectively. The $Y$-algebras are important because they serve as building blocks for many interesting vertex algebras. Conjecturally, they are isomorphic to the $W_{N_1, N_2, N_3}$-algebras of \cite{BFM}, which act on the moduli space of spiked instantons of certain toric Calabi-Yau threefolds \cite{RSYZ}. In these example, the toric diagram has three two-dimensional faces, and each face is labelled by integers $N_1, N_2, N_3$ that indicate an action of the gauge groups $U(N_1)$, $U(N_2)$ and $U(N_3)$. As explained in the introduction of \cite{CLIII}, when one of these labels is zero, we expect that the $Y_{0, M, N}$-algebra has representation categories which are Kazhdan-Lusztig categories $KL_k(\gs\gl_M)$ and $KL_\ell(\gs\gl_N)$ for some $k, \ell$. The extension of a tensor product of two $Y$-algebras along a common $KL_k(\gs\gl_M)$ should correspond to a toric Calabi-Yau threefold whose toric diagram has four faces, and this procedure can be iterated to create more complicated vertex algebras.

For the rest of this paper, we will use the cosets $\cC^{\psi}(n,m)$ as our realization of the $Y$-algebras. One of the key ingredients in the proof of Theorem \ref{CLIIImain} is that aside from the extremal cases $\cC^{\psi}(0,0), \cC^{\psi}(0,1), \cC^{\psi}(1,0)$ (which are isomorphic to $\mathbb{C}$), and $\cC^{\psi}(0,2), \cC^{\psi}(0,2)$ (which are isomorphic to the Virasoro algebra), $\cC^{\psi}(n,m)$ can be realized explicitly as a quotient of the universal two-parameter $\cW_{\infty}$-algebra $\cW(c,\lambda)$ constructed by the second author in \cite{LII}. 

\subsection{Orbifolds of $Y$-algebras}
The two-parameter vertex algebra $\cW(c,\lambda)$ has full automorphism group $\mathbb{Z}_2$ \cite[Cor. 5.3]{LII}. Aside from the above extremal cases, all the algebras $\cC^{\psi}(n,m)$ inherit this $\mathbb{Z}_2$-symmetry. In this paper, our main goal is to study the structure of the orbifold $\cW(c,\lambda)^{\mathbb{Z}_2}$ as a two-parameter vertex algebra, and $\cC^{\psi}(n,m)^{\mathbb{Z}_2}$ as one-parameter vertex algebras, or equivalently, for generic values of $\psi$. We shall adopt the following notation: if a vertex algebra $\cV$ has a minimal strong generating set consisting of $d_i$ fields in weight $n_i$ for $i = 1,2,\dots, r$, we say that $\cV$ is of type $\cW(n_1^{d_1}, \dots, n_r^{d_r})$.

Certain special cases of this problem have previously been studied in the physics literature \cite{B-H}. For example, it was conjectured that $\cW^k(\gs\gl_3)^{\mathbb{Z}_2}$ should be of type $\cW(2,6,8,10,12)$, and this was proven in our earlier paper \cite{AL}. Similar conjectures were given in \cite{B-H} for $\cW^k(\gs\gl_n)$ for $n = 4,5,6$:
\begin{enumerate}
\item $\cW^k(\mathfrak{sl}_4)^{\mathbb{Z}_2}$ should be of type $\cW(2,4,6,8,10,12)$.
\item $\cW^k(\mathfrak{sl}_5)^{\mathbb{Z}_2}$ should be of type $\cW(2,4,6,8^2,9,10^3,11,12^3,13,14^2)$.
\item $\cW^k(\mathfrak{sl}_6)^{\mathbb{Z}_2}$ should be of type $\cW(2,4,6^2,8^2,9,10^3,11,12^3,13,14^2)$.
\end{enumerate}

Also, the parafermion algebra of $\gs\gl_2$, namely, $N^k(\gs\gl_2) = \text{Com}(\cH, V^k(\gs\gl_2))$, is just $\cC^{\psi}(1,1)$. It was conjectured in \cite{B-H} that $N^k(\gs\gl_2)^{\mathbb{Z}_2}$ should be of type $\cW(2,4,6,8,10)$, and this was proven by Kanade and the second author \cite{KL}.  We mention that Jiang and Wang \cite{JWI,JWII} have classified the irreducible modules and computed the fusion rings for the simple parafermion algebras $K_k(\gs\gl_2)^{\mathbb{Z}_2} = \text{Com}(\cH, L_k(\gs\gl_2))^{\mathbb{Z}_2}$ for $k \in \mathbb{N}$, which are $C_2$-cofinite and rational. Here $L_k(\gs\gl_2)$ denotes the simple affine vertex algebra. Due to the well-known isomorphism $K_k(\gs\gl_2) \cong \cW_r(\gs\gl_k)$ for $r =-k + \frac{k+1}{k+2}$ of Arakawa, Lam, and Yamada \cite{ALY}, the orbifolds $K_k(\gs\gl_2)^{\mathbb{Z}_2}$ are really orbifolds of certain rational $\cW$-algebras of type $A$. One of the motivations for this work is to study the representation theory of $\cW_r(\gs\gl_k)^{\mathbb{Z}_2}$ for a general nondegenerate admissible level $r$, which are $C_2$-cofinite and rational \cite{AI,AII}.

\subsection{Main result} Our main result is that after a suitable localization of the ring $\mathbb{C}[c,\lambda]$, $\cW(c,\lambda)^{\mathbb{Z}_2}$ is generated by the weight $4$ field $W^4$ as a two-parameter vertex algebra. A consequence is that after a suitable localization of the ring $\mathbb{C}[\psi]$, all the one-parameter quotients $\cC^{\psi}(n,m)^{\mathbb{Z}_2}$ for $n\geq 4$ and $m=0$, and for $n,m\geq 1$, are generated by $W^4$. In particular, this holds for $\cW^k(\gs\gl_n)^{\mathbb{Z}_2}$ for all $n\geq 4$. We also give a finite strong generating set for $\cC^{\psi}(n,m)^{\mathbb{Z}_2}$. Our strong generating set is not typically minimal, although it is minimal for $\cW^k(\gs\gl_n)^{\mathbb{Z}_2}$ for $n = 4,5,6$ and the above conjectures from \cite{B-H} are true. We will also give the minimal strong generating set for $\cW^k(\gs\gl_7)^{\mathbb{Z}_2}$, which illustrates the subtlety of this problem in general. Our strong generating set yields a finite generating set for Zhu's algebra $A(\cC^{\psi}(n,m)^{\mathbb{Z}_2})$ \cite{Z}. The description of this algebra is an important step in understanding the representation theory of these algebras. The strategy of proof is as follows.
\begin{enumerate}
\item First, we consider the special case $\cC^{\psi}(n,0) = \cW^k(\gs\gl_n)$ for $n \geq 4$, which has the advantage that unlike $\cC^{\psi}(n,m)$ for $m\geq 1$, it admits a large level limit $\cW^{\text{free}}(\gs\gl_n) = \lim_{k\rightarrow \infty} \cW^k(\gs\gl_n)$ which is a free field algebra. We find both minimal weak and strong finite generating sets for $\cW^{\text{free}}(\gs\gl_n)^{\mathbb{Z}_2}$. These give rise to weak and strong finite generating sets for $\cW^{k}(\gs\gl_n)^{\mathbb{Z}_2}$, respectively, which are no longer minimal.

\item Using the weak and strong generating sets for $\cW^{\text{free}}(\gs\gl_n)^{\mathbb{Z}_2}$ for all $n$, we find economical (but not minimal) infinite weak and strong generating sets for $\cW(c,\lambda)^{\mathbb{Z}_2}$. These yield weak and strong finite generating sets for $\cC^{\psi}(n,m)^{\mathbb{Z}_2}$ for all $n,m$.

\item Using the partial OPE algebra of $\cW(c,\lambda)$ which appears in \cite{LII}, we show that all weak generators for $\cW(c,\lambda)^{\mathbb{Z}_2}$ from (2), and hence the entire algebra, can be generated by the field $W^4$.
\end{enumerate}

\section{Vertex algebras} \label{sec:VOA} We will assume that the reader is familiar with vertex algebras, and we use the same notation as the previous paper \cite{CLIII} of the second author. In this section, we briefly recall the definition and basic properties of free field algebras, $\cW$-algebras, and the universal two-parameter algebra $\cW(c,\lambda)$.

\subsection{Free field algebras} 
A free field algebra is a vertex superalgebra $\cV$ with weight grading
$$\cV = \bigoplus_{d \in \frac{1}{2} \mathbb{Z}_{\geq 0} }\cV[d],\qquad \cV[0] \cong \mathbb{C},$$ with strong generators $\{X^i|\ i \in I\}$ satisfying OPE relations
$$ X^i(z) X^j(w) \sim a_{i,j} (z-w)^{-\text{wt}(X^i) - \text{wt}(X^j)},\quad a_{i,j} \in \mathbb{C},\ \quad a_{i,j} = 0\ \ \text{if} \ \text{wt}(X^i) +\text{wt}(X^j)\notin \mathbb{Z}.$$
We now recall the four families of free field algebras that were introduced in \cite{CLIII}.

\smallskip

\noindent {\it Even algebras of orthogonal type}. For each $n\geq 1$ and even $k \geq 2$, $\cO_{\text{ev}}(n,k)$ is the vertex algebra with even generators $a^1,\dots, a^n$ of weight $\frac{k}{2}$, which satisfy
$$a^i(z) a^j(w) \sim \delta_{i,j} (z-w)^{-k}.$$ 
In the case $k=2$, $\cO_{\text{ev}}(n,k)$ just the rank $n$ Heisenberg algebra $\cH(n)$. It has no conformal vector for $k>2$, but for all $k$ it is a simple vertex algebra and has full automorphism group the orthogonal group $\text{O}(n)$.

\smallskip

\noindent {\it Even algebras of symplectic type}. For each $n\geq 1$ and odd $k \geq 1$,  $\cS_{\text{ev}}(n,k)$ is the vertex algebra with even generators $a^i, b^i$ for $i=1,\dots, n$ of weight $\frac{k}{2}$, which satisfy 
\begin{equation} \begin{split} a^i(z) b^{j}(w) &\sim \delta_{i,j} (z-w)^{-k},\qquad b^{i}(z)a^j(w)\sim -\delta_{i,j} (z-w)^{-k},\\ a^i(z)a^j(w) &\sim 0,\qquad\qquad\qquad \ \ \ \ b^i(z)b^j (w)\sim 0.\end{split} \end{equation} In the case $k=1$, $\cS_{\text{ev}}(n,k)$ is just the rank $n$ $\beta\gamma$-system $\cS(n)$. For $k>1$, $\cS_{\text{ev}}(n,k)$ has no conformal vector, but for all $k$ it is simple and has full automorphism group the symplectic group $\text{Sp}(2n)$.

\smallskip

\noindent {\it Odd algebras of symplectic type}. For each $n\geq 1$ and even $k \geq 2$, $\cS_{\text{odd}}(n,k)$ is the vertex superalgebra with odd generators $a^i, b^i$ for $i=1,\dots, n$ of weight $\frac{k}{2}$, which satisfy
\begin{equation} \begin{split} a^{i} (z) b^{j}(w)&\sim \delta_{i,j} (z-w)^{-k},\qquad b^{j}(z) a^{i}(w)\sim - \delta_{i,j} (z-w)^{-k},\\ a^{i} (z) a^{j} (w)&\sim 0,\qquad\qquad\qquad\ \ \ \  b^{i} (z) b^{j} (w)\sim 0. \end{split} \end{equation} In the case $k=2$, $\cS_{\text{odd}}(n,k)$ is just the rank $n$ symplectic fermion algebra $\cA(n)$. It has no conformal vector for $k>2$, but for all $k$ it is simple and has full automorphism group $\text{Sp}(2n)$.

\smallskip

\noindent {\it Odd algebras of orthogonal type}. For each $n\geq 1$ and odd $k \geq 1$, we define $\cO_{\text{odd}}(n,k)$ to be the vertex superalgebra with odd generators $a^i$ for $i=1,\dots, n$ of weight $\frac{k}{2}$, satisfying
\begin{equation}a^i(z) a^j(w) \sim \delta_{i,j} (z-w)^{-k}.\end{equation} For $k=1$, $\cO_{\text{odd}}(n,k)$ is just the free fermion algebra $\cF(n)$. As above, $\cO_{\text{odd}}(n,k)$ has no conformal vector for $k>1$, but it is simple and has full automorphism group $\text{O}(n)$.

\subsection{$\cW$-algebras}\label{sec:W-algebras} 
Let $\gg$ be a simple, finite-dimensional Lie (super)algebra equipped with a nondegenerate, invariant (super)symmetric bilinear form $( \ \ | \ \ )$, and let $f$ be a nilpotent element in the even part of $\gg$. Associated to $\gg$ and $f$ and any complex number $k$, is the $\cW$-(super)algebra $\cW^k(\gg,f)$. The definition is due to Kac, Roan, and Wakimoto \cite{KRW}, and it generalizes the definition for $\gg$ a Lie algebra and $f$ a principal nilpotent given by Feigin and Frenkel \cite{FFI}.

Fix a basis $\{q^\alpha\}_{\alpha \in S}$ for $\gg$ which is homogeneous with respect to parity, and define the corresponding structure constants and parity by
$$[q^\alpha, q^\beta] = \sum_{\gamma \in S}f^{\alpha\beta}_\gamma q^\gamma,\qquad {|\alpha|} = \begin{cases} 0 & \ q^\alpha \ \text{even}, \\ 1 & \ q^\alpha \ \text{odd}. \end{cases}$$
The level $k$ affine vertex algebra of $\gg$ associated to the bilinear form $( \ \ | \ \ )$ has strong generators $\{X^\alpha\}_{\alpha \in S}$ satisfying
$$X^\alpha(z)X^\beta(w) \sim k(q^\alpha|q^\beta) (z-w)^{-2} + \sum_{\gamma\in S}f^{\alpha \beta}_\gamma X^\gamma (w)(z-w)^{-1}.$$
We define $X_\alpha$ to be the field corresponding to $q_\alpha$ where $\{q_\alpha\}_{\alpha \in S}$ is the dual basis of $\gg$ with respect to $( \ \ | \ \  )$. Let $f$ be a nilpotent element in the even part of $\gg$, which we complete to an $\gs\gl_2$-triple $\{f, x, e\} \subseteq \gg$ satisfying 
$$[x, e] =e, \qquad [x, f]=-f, \qquad [e, f]= 2x.$$ We have the decomposition of $\gg$
$$\gg = \bigoplus_{k \in  \frac{1}{2}\mathbb Z} \gg_k, \qquad \gg_k = \{  a \in \gg |\ [x, a] = ka \}. $$
Write $S = \bigcup_k S_k$ and $S_+ = \bigcup_{k>0} S_k$, where $S_k$ corresponds to a basis of $\gg_k$.

As in \cite{KW}, define the complex $C(\gg, f, k) = V^k(\gg) \otimes F(\gg_+) \otimes F(\gg_{\frac{1}{2}})$, where $F(\gg_+)$ is a free field superalgebra associated to the vector superspace $\gg_+ = \bigoplus_{k \in  \frac{1}{2}\mathbb{Z}_{>0}} \gg_k$, and $F(\gg_{\frac{1}{2}})$ is the neutral vertex superalgebra associated to $\gg_{\frac{1}{2}}$ with bilinear form $\langle a, b \rangle = ( f | [a, b] )$. Then $F(\gg_+)$  is strongly generated by fields $\{\varphi_\alpha, \varphi^\alpha\}_{\alpha \in S_+}$, where $\varphi_\alpha$ and $\varphi^\alpha$ have the opposite parity of $q^\alpha$. The OPEs are
$$\varphi_\alpha(z) \varphi^\beta(w) \sim \delta_{\alpha, \beta}(z-w)^{-1}, \qquad \varphi_\alpha(z) \varphi_\beta(w) \sim 0 \sim \varphi^\alpha(z) \varphi^\beta(w).$$
$F(\gg_{\frac{1}{2}})$  is strongly generated by fields $\{\Phi_\alpha\}_{\alpha \in S_{\frac{1}{2}}}$ and $\Phi^\alpha$ and $q^\alpha$ have the same parity. Their OPEs are
\begin{equation*}
\Phi_\alpha(z) \Phi_\beta(w) \sim \langle q^\alpha , q^\beta \rangle (z-w)^{-1}  \sim (f | [q^\alpha, q^\beta])(z-w)^{-1}.
\end{equation*}
There is a $\mathbb{Z}$-grading on $C(\gg, f, k)$ by charge, and a weight $1$ odd field $d(z)$ of charge $-1$,
\begin{equation}
\begin{split}
d(z) &= \sum_{\alpha \in S_+} (-1)^{|\alpha|} :X^\alpha \varphi^\alpha:  
-\frac{1}{2} \sum_{\alpha, \beta, \gamma \in S_+}  (-1)^{|\alpha||\gamma|} f^{\alpha \beta}_\gamma :\varphi_\gamma \varphi^\alpha \varphi^\beta: + \\
&\quad  \sum_{\alpha \in S_+} (f | q^\alpha) \varphi^\alpha + \sum_{\alpha \in S_{\frac{1}{2}}} :\varphi^\alpha \Phi_\alpha:,
\end{split} 
\end{equation}
whose zero-mode $d_0$ is a square-zero differential on $C(\gg, f, k)$. The $\cW$-algebra $\cW^k(\gg, f)$ is defined to be the homology $H(C(\gg, f, k), d_0)$. 
It has Virasoro element \begin{equation} \begin{split}  L & =  L_{\text{sug}} + \partial x + L_{\text{ch}} + L_{\text{ne}}, \ \text{where} \\ L_{\text{sug}} &=  \frac{1}{2(k+h^\vee)} \sum_{\alpha \in S} (-1)^{|\alpha|} :X_\alpha X^\alpha:,\\ L_{\text{ch}} &= \sum_{\alpha \in S_+} \left(-m_\alpha :\varphi^\alpha \partial \varphi_\alpha: + (1-m_\alpha) :(\partial\varphi^\alpha )\varphi_\alpha:  \right),\\ L_{\text{ne}} &= \frac{1}{2} \sum_{\alpha \in S_{\frac{1}{2}}} :(\partial \Phi^\alpha) \Phi_\alpha : . \end{split} \end{equation} Here $m_\alpha=j$ if $\alpha \in S_j$. The key structural theorem is the following.

\begin{thm} \cite[Thm 4.1]{KW}
Let $\gg$ be a simple finite-dimensional Lie superalgebra with an invariant bilinear
form $( \ \ |  \ \ )$, and let $x, f$ be a pair of even elements of $\gg$ such that ${\rm ad}\ x$ is diagonalizable with
eigenvalues in $\frac{1}{2} \mathbb Z$ and $[x,f] = -f$. Denote by $\gg^f$ the centralizer of $f$ in $\gg$, and suppose that all eigenvalues of ${\rm ad}\ x$ on $\gg^f$ are non-positive, so that
$\gg^f = \bigoplus_{j\leq 0} \gg^f_j$. Then
\begin{enumerate}
\item For each $q^\alpha \in \gg^f_{-j}$, ($j\geq  0$) there exists a $d_0$-closed field $K^\alpha$ of weight $1 + j$, with respect to $L$.
\item The homology classes of the fields $K^\alpha$, where $\{ q^\alpha\}$ is a basis of $\gg^f$, freely generate $\cW^k(\gg, f)$.
\item $H_0(C(\gg, f, k), d_0) = \cW^k(\gg, f)$ and $H_j(C(\gg, f, k), d_0) = 0$ if $j \neq 0$.
\end{enumerate}
\end{thm}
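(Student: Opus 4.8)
The plan is to follow the standard cohomological strategy, deducing the theorem from a computation of the BRST cohomology $H(C(\gg,f,k),d_0)$ by degenerating to a classical problem. One works throughout in a fixed conformal weight space (with respect to $L$), which is finite dimensional, so that all the filtrations and spectral sequences below converge. The first step is to equip $C = C(\gg,f,k)$ with a PBW-type (Li) filtration under which $V^k(\gg)$ degenerates to the supercommutative vertex algebra $\C[J_\infty \gg^*]$ of functions on the arc space of $\gg^*$, while the ghost and neutral sectors become (super)commutative exterior algebras; combining this with a rescaling of the currents, one arrives at an associated graded complex that is classical, i.e.\ supercommutative, whose differential is assembled out of a Koszul piece coming from $\sum_{\alpha\in S_+}(-1)^{|\alpha|}:X^\alpha\varphi^\alpha:$, the linear $\chi$-twist $\sum_{\alpha\in S_+}(f|q^\alpha)\varphi^\alpha$, the neutral term $\sum_{\alpha\in S_{1/2}}:\varphi^\alpha\Phi_\alpha:$, and the cubic Chevalley--Eilenberg term.

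The second and technically central step is to compute this classical cohomology via a sequence of Koszul and Lie-algebra-cohomology reductions. The Koszul piece and the $\chi$-twist impose the constraints $X^\alpha = (f|q^\alpha)$ for $\alpha\in S_+$ --- these fields, together with their derivatives, form a regular sequence in $\C[J_\infty \gg^*]$ --- cutting out the arc space of the affine subspace $\{X\in\gg^*: X|_{\gg_+}=(f|\cdot)\}$ and killing the antighosts $\varphi_\alpha$. The neutral term pairs the $\Phi_\alpha$ with the ghosts $\varphi^\alpha$ for $\alpha\in S_{1/2}$ through the form $\langle a,b\rangle=(f|[a,b])$, which is nondegenerate on $\gg_{1/2}$; this is a further Koszul (symplectic) reduction that removes all of these fields, leaving only the ghosts $\varphi^\alpha$ with $\alpha\in S_{\ge 1}$. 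Finally the cubic term computes the Lie algebra cohomology $H^\bullet(\gg_{\ge 1},\,-)$ of the remaining coordinate ring, and Kostant's transversality theorem --- the group $\exp(\gg_{\ge 1})$ acts freely on the constraint scheme with quotient the Slodowy slice $f+\gg^f$ --- shows that this cohomology is concentrated in degree $0$ and equals $\C[J_\infty(f+\gg^f)]$. Since $f+\gg^f$ is an affine space with coordinates indexed by a basis of $\gg^f$, the associated graded cohomology is concentrated in charge $0$ and is freely generated by fields $\bar X^\alpha$, one for each $q^\alpha\in\gg^f_{-j}$, of conformal weight $1+j$; the hypothesis $\gg^f=\bigoplus_{j\le 0}\gg^f_j$ is precisely what forces these weights to be $\ge 1$.

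The third step is degeneration and the construction of generators. Since the associated graded cohomology lives in charge $0$, while every higher differential of the spectral sequence strictly changes the charge, the spectral sequence collapses and $H(C,d_0)$ agrees with the associated graded cohomology as a charge- and weight-graded vector space. This is statement (3): $H_0=\cW^k(\gg,f)$ and $H_j=0$ for $j\neq 0$. For (1), given $q^\alpha\in\gg^f_{-j}$ one lifts $\bar X^\alpha$ to a genuine $d_0$-closed field $K^\alpha$ of weight $1+j$ by successive approximation: begin with $X^\alpha$, whose symbol is $\bar X^\alpha$, and iteratively correct by terms of strictly lower filtration degree, at each stage solving the obstruction equation using acyclicity of the associated graded differential off charge $0$; the process terminates in each weight space because the filtration is bounded there. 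Finally (2): the symbols of the classes $[K^\alpha]$ are the $\bar X^\alpha$, which freely generate the associated graded cohomology, so normally ordered monomials in the $K^\alpha$ and their derivatives have linearly independent symbols and are therefore linearly independent; and they strongly generate $\cW^k(\gg,f)$ since the $\bar X^\alpha$ strongly generate the associated graded. As a cross-check, the Euler--Poincar\'e principle equates the graded character of $H(C,d_0)$ with the alternating character of $C$, which one computes to be exactly the character of the free vertex algebra on $\bigoplus_{j\le 0}\gg^f_j$.

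I expect the main obstacle to be the second step: setting up the filtration and rescaling so that the associated graded differential really is the classical BRST differential, and then carrying out the Koszul-type reductions cleanly in the half-integral case, where the interplay between the $\gg_{1/2}$-ghosts $\varphi^\alpha$, the neutral fermions $\Phi_\alpha$, and the symplectic form on $\gg_{1/2}$ is exactly the subtlety that forces one to introduce the factor $F(\gg_{1/2})$ in the first place. Invoking Kostant transversality at the level of arc spaces, and verifying the convergence and degeneration of the spectral sequence weight by weight while keeping track of the charge, the conformal weight, and the filtration degree simultaneously, is the other place where care is required.
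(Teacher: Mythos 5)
This theorem is not proved in the paper at all: it is quoted verbatim from Kac--Wakimoto \cite[Thm 4.1]{KW}, so there is no in-text argument to compare against. Your outline is a coherent sketch of a known proof, but it follows a genuinely different route from the one in [KW]. You run a single Li-type filtration on the whole complex, degenerate to a classical BRST complex, perform Koszul reductions, and invoke Kostant--Gan--Ginzburg transversality at the level of arc spaces to identify the classical cohomology with $\mathbb{C}[J_\infty(f+\gg^f)]$ concentrated in charge $0$; this is essentially the de Boer--Tjin/Arakawa/De Sole--Kac strategy. Kac and Wakimoto instead first introduce modified currents $J^{\{v\}}$ and split the complex as a tensor product $C = C_+ \otimes C_-$ of two subcomplexes, show $H(C_-,d_0)\cong\mathbb{C}$ by a direct Koszul/homotopy argument (this is where the nondegeneracy of $(f|[\cdot,\cdot])$ on $\gg_{1/2}$ and of the pairing $\gg_{\geq 1}\times\gg_{\leq -1}\to\mathbb{C}$ enters), and then analyze $C_+$ with an auxiliary bigrading, obtaining (1)--(3) with an Euler--Poincar\'e character count for free generation; no arc-space geometry is needed. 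Their argument is more elementary and self-contained; yours buys the identification of the associated graded with functions on the jet scheme of the Slodowy slice, which is what one wants for associated varieties. The one caveat is that your second step defers the real work --- exactness of the arc-space Koszul complex (regularity of the constraint sequence in $\mathbb{C}[J_\infty\gg^*]$) and vanishing of the higher Lie algebra cohomology over the arc space --- to an unproved invocation of ``Kostant transversality at the level of arc spaces,'' so as written this is an outline of a proof rather than a proof; but the skeleton is sound and each deferred ingredient is a theorem in the literature.
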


\subsection{Free field limits of $\cW$-algebras} \label{sec:freewalg} 
We recall the notion of a {\it deformable family} of vertex algebras from \cite{LI,CLI,CLII}. It is a vertex algebra $\cB$ defined over the ring $F_K$ of rational functions $\frac{p(\kappa)}{q(\kappa)}$ in a formal variable $\kappa$, where $\text{deg} \ p \leq \text{deg} \ q$, and the zeroes of $q$ lie in some at most countable subset $K \subseteq \mathbb{C}$. Then $\cB^{\infty} = \lim_{\kappa \rightarrow \infty} \cB$ is a well-defined vertex algebra over $\mathbb{C}$. 

Let $\gg$ be a Lie (super)algebra with a nondegenerate form $( \ \ |  \ \ )$, and let $f \in \gg$ be an even nilpotent. It was shown in \cite{CLIII} that after rescaling the generators of $\cW^k(\gg,f)$ by $\frac{1}{\sqrt{k}}$, there exists a deformable family $\cW(\gg,f)$ with parameter $\kappa = \sqrt{k}$ such that $$\cW^k(\gg,f) \cong \cW(\gg,f) / (\kappa - \sqrt{k}) \cW(\gg,f),\ \text{for all} \ k \neq 0.$$ Here $(\kappa - \sqrt{k}) \cW(\gg,f)$ denotes the vertex algebra ideal generated by $\kappa - \sqrt{k}$, regarded as an element of the weight zero subspace. Moreover, its large level limit $\lim_{\kappa \rightarrow \infty} \cW(\gg,f)$, which we denote by $\cW^{\text{free}}(\gg,f)$, has the following property.

\begin{thm} \label{thm:wfreelimit} \cite[Thm. 3.5 and Cor. 3.4]{CLIII} Let $\gg$ be a Lie superalgebra  with invariant, nondegenerate supersymmetric bilinear form, and let $f\in \gg$ be an even nilpotent. Then 
$$\cW^{\text{free}}(\gg,f) \cong \bigotimes_{i=1}^m \cV_i,$$  where each $\cV_i$ is one of the standard free field algebras $\cO_{\text{ev}}(n,k)$, $\cO_{\text{odd}}(n,k)$,  $\cS_{\text{ev}}(n,k)$, or $\cS_{\text{odd}}(n,k)$.
\end{thm}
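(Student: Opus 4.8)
The plan is to use the free strong generating set provided by the Kac--Wakimoto structure theorem together with a degree count in the deformation parameter $\kappa=\sqrt{k}$. Recall that $\cW^k(\gg,f)$ is freely strongly generated by fields $K^\alpha$ indexed by a homogeneous basis $\{q^\alpha\}$ of $\gg^f$, with $K^\alpha$ of the same parity as $q^\alpha$ and of conformal weight $1+j$ whenever $q^\alpha\in\gg^f_{-j}$. In the deformable family $\cW(\gg,f)$ these are rescaled to $\widehat{K}^\alpha=\kappa^{-1}K^\alpha$, so that $\cW^{\text{free}}(\gg,f)$ is freely strongly generated by the limits of the $\widehat{K}^\alpha$; hence, by the reconstruction theorem, it is enough to compute the operator products $\widehat{K}^\alpha(z)\widehat{K}^\beta(w)$ in the limit $\kappa\to\infty$ and to recognize that they coincide with the defining relations of a tensor product of the four standard free field algebras. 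I would do this in two parts: first, that only the most singular, field-independent term of each operator product survives the limit, so $\cW^{\text{free}}(\gg,f)$ is \emph{some} free field algebra; and second, that the resulting scalar pairing is block diagonal for the conformal weight grading, with a nondegenerate form on each block, after which the normal form of free field algebras identifies the factors.

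For the first part, by freeness the coefficient of $(z-w)^{-n-1}$ in $K^\alpha(z)K^\beta(w)$ is a normally ordered differential polynomial in the $K^\gamma$ of conformal weight $\text{wt}(K^\alpha)+\text{wt}(K^\beta)-n-1$. Unless this is the constant term, which occurs only for $n+1=\text{wt}(K^\alpha)+\text{wt}(K^\beta)$, it is a sum of monomials of normal-ordering degree $m\geq 1$ in generators of strictly smaller total weight. Under the rescaling $K^\gamma=\kappa\widehat{K}^\gamma$ such a monomial picks up a factor $\kappa^m$, while its coefficient, being a structure constant of the deformable family $\cW(\gg,f)$, has $\kappa$-degree at most $2-m$ at infinity; a refinement of this bound, using that the only positive powers of $\kappa$ in the Kac--Wakimoto construction come from the affine central term and attach only to field-independent parts of operator products, improves it to $1-m$ for monomials of positive degree. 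Combined with the overall factor $\kappa^{-2}$ in $\widehat{K}^\alpha(z)\widehat{K}^\beta(w)=\kappa^{-2}K^\alpha(z)K^\beta(w)$, every monomial of positive degree is then $O(\kappa^{-1})$ and vanishes in the limit, so $\widehat{K}^\alpha(z)\widehat{K}^\beta(w)\sim A_{\alpha\beta}(z-w)^{-\text{wt}(K^\alpha)-\text{wt}(K^\beta)}$ with $A_{\alpha\beta}=\lim_{\kappa\to\infty}\kappa^{-2}\big(K^\alpha_{(N-1)}K^\beta\big)\in\C$, $N=\text{wt}(K^\alpha)+\text{wt}(K^\beta)$. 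By definition of a free field algebra, $A_{\alpha\beta}=0$ already when $\text{wt}(K^\alpha)+\text{wt}(K^\beta)\notin\Z$, so the integer-weight and half-integer-weight generators are automatically uncoupled.

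For the second part, I would determine the surviving top term of each $\widehat{K}^\alpha$ --- from a free field (Miura-type) realization of $\cW^k(\gg,f)$, or directly from the recursive construction of the cocycle $K^\alpha$ --- and read off $A_{\alpha\beta}$ from it. For $q^\alpha\in\gg^f_{-j}$ with $j$ an integer this top term is, up to a nonzero scalar and lower-order corrections, $\partial^{\,j}X_{\ad(e)^{\,j}q^\alpha}$, and contracting two such terms with $X_a(z)X_b(w)\sim k(a\mid b)(z-w)^{-2}$ after rescaling gives $A_{\alpha\beta}=c_j\,(q^\alpha\mid\ad(e)^{\,2j}q^\beta)$ for a nonzero constant $c_j$ depending only on $j$; here the right-hand side vanishes unless $q^\alpha,q^\beta$ lie in the same $\gg^f_{-j}$, since $\ad(e)^{\,j+j'}q^\beta\neq 0$ needs $j+j'\leq 2j'$ and, symmetrically, $j+j'\leq 2j$. (For half-integer $j$ the top term involves instead the neutral free fields $F(\gg_{\frac12})$, and the same analysis using their pairing $\langle q^\alpha,q^\beta\rangle=(f\mid[q^\alpha,q^\beta])$ yields the analogous conclusion.) Thus $(A_{\alpha\beta})$ is block diagonal, one block for each $j$ with $\gg^f_{-j}\neq 0$, and on each block it is a nonzero multiple of an $\mathfrak{sl}_2$-equivariant pairing on $\gg^f_{-j}$ which is nondegenerate by the standard isotypic decomposition argument: $\gg^f_{-j}$ is the lowest weight space of the spin-$j$ part of $\gg$, $\ad(e)^{\,2j}$ carries it isomorphically onto the highest weight space, and $(\cdot\mid\cdot)$ pairs these perfectly; since the form is even, it is moreover block diagonal for the parity splitting $\gg^f_{-j}=(\gg^f_{-j})_{\bar 0}\oplus(\gg^f_{-j})_{\bar 1}$, so each parity component carries a nondegenerate form. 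By the skew-symmetry of operator products, this form is symmetric on $(\gg^f_{-j})_{\bar 0}$ precisely when $j\in\Z$ and on $(\gg^f_{-j})_{\bar 1}$ precisely when $j\in\Z+\tfrac12$; choosing an orthonormal basis in the symmetric case and a symplectic basis in the skew case exhibits the subalgebra generated by $\{\widehat{K}^\alpha:q^\alpha\in(\gg^f_{-j})_{\bar p}\}$ in the limit as $\cO_{\text{ev}}$ or $\cS_{\text{ev}}$ when $\bar p=\bar 0$, and as $\cS_{\text{odd}}$ or $\cO_{\text{odd}}$ when $\bar p=\bar 1$, each with parameter $k=2+2j$ and rank determined by the dimension. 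Since generators attached to distinct pairs $(j,\bar p)$ have trivial mutual operator products in the limit, $\cW^{\text{free}}(\gg,f)$ is the tensor product of these factors, which is the assertion.

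The hard part will be the second step: the fields $K^\alpha$ are defined only implicitly as homology representatives, so one must extract just enough of the top weight-filtration piece of each $\widehat{K}^\alpha$ to pin down the pairing $A_{\alpha\beta}$, and in particular to confirm that the universal constants $c_j$ are nonzero --- were one of them to vanish, the corresponding block would generate a commutative polynomial vertex algebra rather than one of the four standard types, and the theorem would fail. The first step is mostly careful bookkeeping of powers of $\kappa$ through the construction of the generators and the differential $d_0$.
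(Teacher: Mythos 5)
This theorem is quoted from \cite{CLIII} and is not proved in the present paper; your outline follows essentially the same strategy as the proof there (Thm.~3.3--3.5 of \cite{CLIII}): free strong generation by the Kac--Wakimoto fields $K^\alpha$, a degree count in $\kappa=\sqrt{k}$ showing that only the field-independent most singular term of each rescaled OPE survives the limit, and identification of the resulting scalar pairing on $\gg^f_{-j}$ with a nonzero multiple of $(a\mid \ad(e)^{2j}b)$, whose nondegeneracy and symmetry type follow from $\mathfrak{sl}_2$-representation theory together with skew-symmetry of the OPE. The two points you flag as the hard part --- the refined bound $\deg_\kappa c_m\le 1-m$ for monomials of positive normal-ordering degree, and the nonvanishing of the universal constants $c_j$ extracted from the leading filtration piece of $K^\alpha$ --- are exactly where the substance of the argument in \cite{CLIII} lies, so your proposal is a faithful blueprint of that proof rather than an alternative to it.
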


Recall the decomposition $\gg^f = \bigoplus_{j\geq 0} \gg^f_{-j}$, where $j \in  \frac{1}{2} \mathbb{N}$. Fix a basis $J^f_{-j}$ for $\gg^f_{-j}$, and recall the field $K^{\alpha} \in \cW^k(\gg,f)$ of weight $1+j$ corresponding to $q^{\alpha} \in J^f_{-j}$. We denote the corresponding fields in the limit $\cW^{\text{free}}(\gg,f)$ by $X^{\alpha}$, and we partition $J^f_{-j}$ into subsets $J^f_{-j, \text{ev}}$ and $J^f_{-j, \text{odd}}$ consisting of even and odd elements. Then

\begin{enumerate}
\item If $j$ is a half-integer, $\{X^{\alpha}|\ \alpha \in J^f_{-j,\text{ev}}\}$ has a skew-symmetric pairing, and generates an even algebra of symplectic type.
\item If $j$ is an integer, $\{X^{\alpha}|\ \alpha \in J^f_{-j,\text{ev}}\}$ has a symmetric pairing, and generates an even algebra of orthogonal type.
\item If $j$ is a half-integer, $\{X^{\alpha}|\ \alpha \in J^f_{-j,\text{odd}}\}$ has a symmetric pairing, and generates an odd algebra of orthogonal type.
\item If $j$ is an integer, $\{X^{\alpha}|\ \alpha \in J^f_{-j,\text{odd}}\}$ has a skew-symmetric pairing, and generates an odd algebra of symplectic type.
\end{enumerate}

Theorem \ref{thm:wfreelimit} has several applications, including:
\begin{enumerate}
\item Simplicity of $\cW^{k}(\gg,f)$ for generic values of $k$ whenever $\gg$ admits an invariant, nondegenerate supersymmetric bilinear form \cite[Thm. 3.6]{CLIII}, 
\item Strong finite generation of $\cW^{k}(\gg,f)^G$ for any reductive group of automorphisms $G$, and generic values of $k$ \cite[Thm. 4.1]{CLIII}.
\end{enumerate} 
Statement (2) is based on three observations. First, taking $G$-invariants commutes with taking the limit in the following sense:
\begin{lemma} \cite[Lemma 4.1]{CLIII} \label{lem:orbifold} Let $\gg$ be a Lie (super)algebra with a nondegenerate form, $f \in \gg$ an even nilpotent element, and let $\cW^{\text{free}}(\gg,f) \cong \bigotimes_{i=1}^m \cV_i$ be its free field limit. Let $G$ be a reductive group of automorphisms of $\cW^k(\gg,f)$ as a one-parameter vertex algebra which acts trivially on the ring of rational functions of $k$. Then $\cW(\gg,f)^G$ is a deformable family and $$\lim_{\kappa \rightarrow \infty} \big(\cW(\gg,f)^G\big) \cong  \big(\lim_{\kappa \rightarrow \infty} \cW(\gg,f)\big)^G \cong \big(\bigotimes_{i=1}^m \cV_i\big)^G.$$
\end{lemma}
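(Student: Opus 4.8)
The plan is to deduce everything from two standard facts about a reductive group $G$ over $\mathbb{C}$: that every rational $G$-module is the direct sum of its isotypic components, and that taking $G$-invariants, being computed by the $G$-equivariant Reynolds projection, commutes with flat base change. Write $\cB := \cW(\gg,f)$; recall this is a deformable family, i.e.\ a vertex algebra over $F_K$ that is free of finite rank over $F_K$ in each conformal weight $d$, and that $\lim_{\kappa\to\infty}\cB = \cB\otimes_{F_K}\mathbb{C}$, where $\mathbb{C} = F_K/\gm$ and $\gm = \{p/q\in F_K : \deg p < \deg q\}$ is the ideal of rational functions vanishing at $\kappa = \infty$. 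The hypotheses on $G$ say precisely that $G$ acts on $\cB$ by $F_K$-linear vertex algebra automorphisms preserving the weight grading — hence inducing an action on $\lim_{\kappa\to\infty}\cB$ — and that this action is rational, as is standard for a reductive group of automorphisms. The object of study is $\cB^G = \bigoplus_d\cB^G[d]$, with $\cB^G[d] = \cB[d]^G$.

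First I would check that $\cB^G$ is again a deformable family. Fix $d$. Rationality of the action and linear reductivity of $G$ give an isotypic decomposition $\cB[d] = \bigoplus_\pi V_\pi\otimes_{\mathbb{C}}M_\pi$ over $F_K$, the sum over the irreducible $\mathbb{C}$-representations $V_\pi$ of $G$, with each $M_\pi$ a direct summand of the finite-rank free $F_K$-module $\cB[d]$ — hence finitely generated and torsion-free, and in fact free over $F_K$. In particular $\cB^G[d] = M_{\mathrm{triv}}$ is a finite-rank free $F_K$-module and an $F_K$-module direct summand of $\cB[d]$. Since the vertex operations and the grading on $\cB$ are defined over $F_K$ and $G$ acts by automorphisms, $\cB^G = \bigoplus_d\cB^G[d]$ is a vertex subalgebra of $\cB$ over $F_K$ with finite-rank free weight spaces, i.e.\ a deformable family.

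Next I would show the limit commutes with $G$-invariants. Applying $-\otimes_{F_K}\mathbb{C}$ to the isotypic decomposition gives
$$\lim_{\kappa\to\infty}\cB[d] \;=\; \cB[d]\otimes_{F_K}\mathbb{C} \;=\; \bigoplus_\pi V_\pi\otimes_{\mathbb{C}}\big(M_\pi\otimes_{F_K}\mathbb{C}\big),$$
again an isotypic decomposition of a rational $G$-module, whose trivial component is $M_{\mathrm{triv}}\otimes_{F_K}\mathbb{C} = \cB^G[d]\otimes_{F_K}\mathbb{C}$. Thus $\cB^G[d]\otimes_{F_K}\mathbb{C}$ is exactly the $G$-invariant subspace of $\lim_{\kappa\to\infty}\cB[d]$. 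Summing over $d$, and noting that the vertex operations on $\cB\otimes_{F_K}\mathbb{C}$ are induced from those on $\cB$, so that they restrict correctly to invariants and $G$ acts on the limit by vertex algebra automorphisms, we obtain an isomorphism of vertex algebras
$$\lim_{\kappa\to\infty}\big(\cW(\gg,f)^G\big) \;=\; \cB^G\otimes_{F_K}\mathbb{C} \;\cong\; \big(\cB\otimes_{F_K}\mathbb{C}\big)^G \;=\; \big(\lim_{\kappa\to\infty}\cW(\gg,f)\big)^G.$$
Finally, Theorem \ref{thm:wfreelimit} identifies $\lim_{\kappa\to\infty}\cW(\gg,f) = \cW^{\text{free}}(\gg,f) \cong \bigotimes_{i=1}^m\cV_i$, the isomorphism carrying the images $X^\alpha$ of the generators $K^\alpha$ to the standard generators of the free field factors $\cV_i$. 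Choosing the $K^\alpha$ (hence the $X^\alpha$) so that $G$ acts $F_K$-linearly on them — possible since $G$ is reductive — the $G$-action on the limit becomes a linear, weight-preserving action on the $X^\alpha$, which transports to the evident action on $\bigotimes_{i=1}^m\cV_i$. Combining this with the displayed isomorphism gives $\lim_{\kappa\to\infty}\big(\cW(\gg,f)^G\big)\cong\big(\bigotimes_{i=1}^m\cV_i\big)^G$, as claimed.

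The only point with real content — the rest is bookkeeping — is ruling out a collapse in the limit: a priori one has merely a canonical map $\lim_{\kappa\to\infty}(\cB^G)\to(\lim_{\kappa\to\infty}\cB)^G$, and one must show it is bijective, not just injective or just surjective. Reductivity of $G$ supplies exactly this through the isotypic decomposition: it exhibits $\cB^G[d]$ as an $F_K$-module direct summand of $\cB[d]$, so $-\otimes_{F_K}\mathbb{C}$ creates no $\mathrm{Tor}$ and kills no invariant vectors, while the complementary summand is a sum of nontrivial isotypics that stays so after base change, so no spurious invariants appear. The secondary task — verifying that the free field limit carries the expected $G$-action, so that the abstract orbifold $(\lim\cB)^G$ is identified with the $(\bigotimes\cV_i)^G$ used later — is immediate once the strong generators are chosen $G$-equivariantly.
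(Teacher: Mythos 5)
Your argument is correct and is essentially the proof given in the cited reference [CLIII, Lemma 4.1] (the present paper only quotes the lemma): reductivity yields the isotypic decomposition of each finite-rank free weight space over $F_K$, exhibiting the invariants as a free direct summand, so that the base change $-\otimes_{F_K}\mathbb{C}$ realizing $\kappa\to\infty$ neither kills invariants nor creates new ones. The identification of the limiting $G$-action with the evident one on $\bigotimes_i\cV_i$ via a $G$-equivariant choice of generators is likewise how the cited proof concludes.
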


Second, a strong finite generating set for the limit $\cB^{\infty}$ of a deformable family gives rise a strong finite generating set for $\cB$ after a suitable localization of the ring $F_K$:
\begin{lemma} \cite[Lemma 3.2]{CLII} \label{lem:passagestrong} Let $K\subseteq \mathbb{C}$ be at most countable, and let $\cB$ be a deformable family over $F_K$ with weight grading $\cB = \bigoplus_{d\geq 0} \cB[d]$, such that $\cB[0] \cong F_K$. Let $U = \{\alpha_i|\ i\in I\}$ be a strong generating set for $\cB^{\infty}$, and let $T = \{a_i|\ i\in I\}$ be a subset of $\cB$ such that $\phi(a_i) = \alpha_i$. There exists a subset $S\subseteq \mathbb{C}$ containing $K$ which is at most countable, such that $F_S \otimes_{F_K}\cB$ is strongly generated by $T$. Here we have identified $T$ with the set $\{1 \otimes a_i|\ i\in I\} \subseteq F_S \otimes_{F_K} \cB$. \end{lemma}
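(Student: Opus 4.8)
The plan is to argue one weight at a time, reducing the whole statement to a single determinant living in $F_K$. Recall that in the deformable-family framework each weight space $\cB[d]$ is a free $F_K$-module of finite rank $r_d$, that the limit map $\phi\colon\cB\to\cB^\infty$ restricts on $\cB[0]=F_K$ to the ring homomorphism $F_K\to\mathbb{C}$ given by evaluation at $\kappa=\infty$, and that $\cB^\infty[d]=\cB[d]\otimes_{F_K,\phi}\mathbb{C}$; in particular $\dim_{\mathbb{C}}\cB^\infty[d]=r_d$ and $\phi$ sends any $F_K$-basis of $\cB[d]$ to a $\mathbb{C}$-basis of $\cB^\infty[d]$. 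Fix a weight $d$. Since $U$ strongly generates $\cB^\infty$, the images of the normally ordered monomials $:\!\partial^{k_1}\alpha_{i_1}\cdots\partial^{k_s}\alpha_{i_s}\!:$ of total weight $d$ span $\cB^\infty[d]$ over $\mathbb{C}$, so I can select finitely many of them, necessarily $r_d$ in number, whose images form a $\mathbb{C}$-basis of $\cB^\infty[d]$. Let $m^d_1,\dots,m^d_{r_d}\in\cB[d]$ denote the \emph{same} normally ordered monomials built instead from the elements $a_i\in T$, so that $\{\phi(m^d_j)\}_{j}$ is a basis of $\cB^\infty[d]$.

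Next I would fix an $F_K$-basis $b^d_1,\dots,b^d_{r_d}$ of $\cB[d]$, write $m^d_j=\sum_i c^d_{ij}\,b^d_i$ with $c^d_{ij}\in F_K$, and set $D_d=\det(c^d_{ij})\in F_K$. Applying $\phi$ gives $\phi(D_d)=\det(\phi(c^d_{ij}))$, which is the determinant of the change-of-basis matrix from $\{\phi(b^d_i)\}$ to $\{\phi(m^d_j)\}$ inside $\cB^\infty[d]$, hence nonzero. Thus $D_d$ is a nonzero rational function whose value at $\kappa=\infty$ is finite and nonzero: writing $D_d=p_d/q_d$ in lowest terms with $\deg p_d\le\deg q_d$ (as forced by membership in $F_K$), nonvanishing at infinity forces $\deg p_d=\deg q_d$, so $D_d^{-1}=q_d/p_d$ again meets the degree constraint and has its poles among the finitely many roots of $p_d$. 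Let $S_d$ be the union of $K$ with the set of roots of $p_d$. Then $D_d$ and $D_d^{-1}$ both lie in $F_{S_d}$, so the matrix $(c^d_{ij})$ is invertible over $F_{S_d}$, and hence $\{m^d_j\}_j$ is an $F_{S_d}$-basis of $F_{S_d}\otimes_{F_K}\cB[d]$. In particular the normally ordered monomials in $T$ of weight $d$ span $F_{S_d}\otimes_{F_K}\cB[d]$.

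Finally, put $S=\bigcup_{d\ge 0}S_d$. This contains $K$ and is at most countable, being the union of $K$ with a countable union of finite sets. Base-changing up from $F_{S_d}$ to $F_S$ shows that for every $d$ the normally ordered monomials in $T$ of weight $d$ span $F_S\otimes_{F_K}\cB[d]$; summing over $d$, they span $F_S\otimes_{F_K}\cB$, which is exactly the assertion that $T$ strongly generates it.

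The one point needing real care is the bookkeeping around $F_K$: inverting $D_d$ must leave us inside a ring of the same shape --- rational functions with numerator degree at most denominator degree and poles confined to an at most countable set --- and this is precisely where the hypothesis that $U$ generates the \emph{limit} $\cB^\infty$ enters, via the nonvanishing of $D_d$ at $\kappa=\infty$. Everything else is routine linear algebra over $F_K$ together with the standard properties of the large-$\kappa$ limit recalled above.
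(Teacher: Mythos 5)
Your proof is correct and follows essentially the same route as the argument in \cite{CLII}: lift monomials in the strong generators of $\cB^{\infty}$ forming a basis of each weight space, observe that the transition determinant $D_d\in F_K$ does not vanish at $\kappa=\infty$ and hence has only finitely many zeros, and adjoin those zeros to $K$ over all weights to obtain a countable $S$. The degree bookkeeping for $D_d^{-1}\in F_{S_d}$ and the weight-by-weight reduction are exactly as in the original proof, so there is nothing to add.
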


Third, by \cite[Thm 4.10]{CLIII}, for any finite tensor product $\cV = \bigotimes_{i=1}^m \cV_i$ of standard free field algebras, and any reductive group $G$ of automorphisms of $\cV$, $\cV^G$ is strongly finitely generated.

We will also need a version of Lemma \ref{lem:passagestrong} for weak generating sets.
\begin{lemma}  \label{lem:passageweak} Let $\cB$ be a deformable family over $F_K$ as in Lemma \ref{lem:passagestrong}. Let $U = \{\alpha_i|\ i\in I\}$ be a weak generating set for $\cB^{\infty}$, and let $T = \{a_i|\ i\in I\}$ be a subset of $\cB$ such that $\phi(a_i) = \alpha_i$. There exists a subset $S\subseteq \mathbb{C}$ containing $K$ which is at most countable, such that $F_S \otimes_{F_K}\cB$ is weakly generated by $T$. As above, we have identified $T$ with the set $\{1 \otimes a_i|\ i\in I\} \subseteq F_S \otimes_{F_K} \cB$. \end{lemma}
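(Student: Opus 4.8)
The plan is to mimic the proof of Lemma \ref{lem:passagestrong} from \cite{CLII}, making the adjustments needed to handle weak generation rather than strong generation. Recall that a weak generating set for a vertex algebra is one such that the whole algebra is obtained by iterating all the circle products $a \circ_n b$ (for all $n\in\mathbb{Z}$, not merely $n<0$) starting from the generators; equivalently, it is a set that generates the algebra under the operations of normally ordered product, the circle products, \emph{and} their ``positive'' counterparts. Since $\cB^{\infty} = \lim_{\kappa\to\infty}\cB$ is defined so that each weight space $\cB^{\infty}[d]$ is the specialization at $\kappa=\infty$ of the free $F_K$-module $\cB[d]$ of the same finite rank, the map $\phi\colon \cB \to \cB^{\infty}$ is, in each fixed weight, reduction of a free finite-rank module.

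First I would fix, for each weight $d$, a basis of the free $F_K$-module $\cB[d]$ and observe that $\cB[d]$ has finite rank equal to $\dim_{\mathbb{C}}\cB^{\infty}[d]$. Since $U = \{\alpha_i\}$ weakly generates $\cB^{\infty}$, for each basis vector $v$ of $\cB^{\infty}[d]$ there is a finite ``weak word'' in the $\alpha_i$ — a finite iterated expression built from normally ordered products and circle products $\circ_n$ with $n$ possibly nonnegative — equal to $v$. Lifting each such word verbatim to the corresponding expression in the $a_i \in \cB$ (this makes sense because all vertex-algebra operations are defined over $F_K$), I obtain elements of $\cB[d]$ whose images under $\phi$ span $\cB^{\infty}[d]$ over $\mathbb{C}$. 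By Nakayama-type reasoning for the local ring obtained by localizing $F_K$ at the maximal ideal $(\kappa^{-1})$ — or, concretely, because a finite set of elements of a free finite-rank module over an integral domain whose reductions mod a prime span the reduction must already span after inverting the finitely many ``offending'' denominators/minors — these lifted elements generate $\cB[d]$ over $F_{K}$ once we invert a finite set of elements of $F_K$, i.e.\ after adjoining finitely many points to $K$. The only subtlety relative to the strong-generation case is bookkeeping: here the generating words involve positive circle products as well, but this changes nothing structurally, since $\phi$ is a homomorphism for all circle products and the argument only uses that each weight space is a finite-rank free module and $\phi$ is its reduction.

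Since in weight $d$ only finitely many generators $a_i$ and finitely many weak words can appear (all words producing elements of bounded weight have bounded length and bounded circle-product indices), for each $d$ we only need to invert finitely many elements of $F_K$; let $S_d \subseteq \mathbb{C}$ be the (finite) set of new poles introduced at weight $d$. Taking $S = K \cup \bigcup_{d\geq 0} S_d$, which is a countable union of finite sets and hence at most countable, we get that over $F_S$ the lifted elements generate each $\cB[d]$, and therefore the images $1\otimes a_i$ weakly generate $F_S\otimes_{F_K}\cB$: indeed, the $F_S$-span of the weak words in the $a_i$ contains a spanning set of $\cB[d]$ in every weight $d$, so it is all of $F_S\otimes_{F_K}\cB$. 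The main obstacle — really the only point requiring care — is making precise the passage from ``$\phi$-images span'' to ``the elements themselves generate after localization,'' i.e.\ the Nakayama step; but this is identical to the corresponding step in the proof of Lemma \ref{lem:passagestrong} in \cite{CLII}, and the presence of positive-index circle products in the weak words does not affect it. Hence the proof is essentially a citation of \cite[Lemma 3.2]{CLII} with ``strong'' replaced by ``weak'' throughout.
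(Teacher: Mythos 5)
Your argument is correct and in substance identical to the paper's: the paper simply observes that the iterated nonnegative circle products of the $\alpha_i$ form a strong generating set for $\cB^{\infty}$ and then cites Lemma \ref{lem:passagestrong}, which is precisely the reduction you carry out by hand. One small inaccuracy worth noting: weak words of a fixed output weight need \emph{not} have bounded length or bounded circle-product indices (since $\circ_n$ with $n\geq 0$ lowers weight), but this is harmless because you only need finitely many such words per weight, which follows from the finite-dimensionality of $\cB^{\infty}[d]$ --- a point you already make correctly earlier in the paragraph.
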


\begin{proof} This is immediate from Lemma \ref{lem:passagestrong} and the fact that if $\{\alpha_i|\ i\in I\}$ weakly generates $\cB^{\infty}$, the set 
$$\{\alpha_{i_1} \circ_{j_1}( \cdots (\alpha_{i_r-1} \circ_{j_{r-1}} \alpha_{i_r})\cdots )|\ i_1,\dots, i_r \in I,\ j_1,\dots, j_{r-1} \geq 0\}$$ strongly generates $\cB^{\infty}$. \end{proof}

In this paper, we only need the special case of $\cW^k(\gs\gl_n)$, which satisfies
\begin{equation} \label{freefieldlimit} \cW^{\text{free}}(\gs\gl_n) \cong \bigotimes_{i=1}^n \cO(1, 2i).\end{equation}
We often use the notation $W^i$ for the generators of $\cO(1, 2i)$, which satisfy $$W^i(z) W^i(w) \sim (z-w)^{-2i}.$$

\section{Universal two-parameter $\cW_{\infty}$-algebra} \label{sec:winf} 
Here we recall some features of the universal two-parameter vertex algebra $\cW(c,\lambda)$ constructed by the second author in \cite{LII}. It is defined over the ring $\mathbb{C}[c,\lambda]$, and is generated by a Virasoro field $L$ of central charge $c$ and a primary weight $3$ field $W^3$ which is normalized so that $(W^3)_{(5)} W^3 = \frac{c}{3} 1$. The remaining strong generators $W^i$ of weight $i \geq 4$ are defined inductively by $$W^i = (W^{3})_{(1)} W^{i-1},\qquad i \geq 4.$$ Then $\cW(c,\lambda)$ is freely generated by $\{L,W^i|\ i\geq 3\}$. It has a conformal weight grading $$\cW(c,\lambda) = \bigoplus_{n\geq 0} \cW(c,\lambda)[n],$$ where each $\cW(c,\lambda)[n]$ is a free $\mathbb{C}[c,\lambda]$-module and $\cW(c,\lambda)[0] \cong \mathbb{C}[c,\lambda]$. There is a symmetric bilinear form on $\cW(c,\lambda)[n]$ given by
$$\langle ,  \rangle_n : \cW(c,\lambda)[n] \otimes_{\mathbb{C}[c,\lambda]} \cW(c,\lambda)[n] \rightarrow \mathbb{C}[c,\lambda],\qquad \langle \omega, \nu \rangle_n = \omega_{(2n-1)} \nu.$$ The determinant $\text{det}_n$ of this form is nonzero for all $n$, which is equivalent to the simplicity of $\cW(c,\lambda)$ as a vertex algebra over $\mathbb{C}[c,\lambda]$.

Certain coefficients are independent of the parameters $c,\lambda$. We have for $3 \leq i\leq j$
$$W^i(z) W^j(w) \sim \cdots + a_{i,j} W^{i+j-2}(w)(z-w)^{-2} + b_{i,j} \partial W^{i+j -2}(w)(z-w)^{-1},$$ where $a_{i,j}$ and $b_{i,j}$ are independent of $c,\lambda$. By definition, $a_{3,j} = 1$ for all $j\geq 3$.
In \cite{LII} we computed a few of these constants, namely, 
$$ a_{4,j} = \frac{4}{j+1},\qquad b_{4,j} =  \frac{12}{(j+1)(j+2)},\qquad a_{5,j} =  \frac{20}{(j+1)(j+2)}.$$ In fact, they can be deduced from the recursive procedure in the proof of Theorem 5.2 of \cite{LII}, and we record this result for later use.
\begin{lemma} \label{formofab} For all $3 \leq i \leq j$, we have
$$a_{i,j} = \frac{i!}{6 (j+1)(j+2)\cdots (j+i-3)}, \qquad b_{i,j} = \frac{i!(i-1)}{6 (j+1)(j+2)\cdots (j+i-2)}.$$
\end{lemma}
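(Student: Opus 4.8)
The plan is to prove both closed forms by induction on $i$, using a two-term recursion extracted from the defining relation $W^i = (W^3)_{(1)}W^{i-1}$ together with the base data $a_{3,j}=1$ and $b_{3,j}=\frac{2}{j+1}$. Applying the associativity (Borcherds) identity to $((W^3)_{(1)}W^{i-1})_{(n)}W^j$ for $n\in\{0,1\}$ writes it as a sum of four terms, each of which is either $(W^3)_{(m)}$ applied to a coefficient of the OPE $W^{i-1}(z)W^j(w)$ -- known inductively -- or $(W^{i-1})_{(m)}$ applied to a coefficient of $W^3(z)W^j(w)$, which is controlled by $(W^3)_{(1)}W^j = W^{j+1}$, $a_{3,j}=1$, $b_{3,j}=\frac{2}{j+1}$. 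This is precisely the recursive procedure from the proof of \cite[Thm.~5.2]{LII}; the constants $a_{4,j}$, $b_{4,j}$, $a_{5,j}$ recorded above are its $i=4,5$ instances.

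Reading off from the $(z-w)^{-2}$ term the coefficient of the top strong generator $W^{i+j-2}$, and using $(W^3)_{(1)}W^{i+j-3}=W^{i+j-2}$ wherever it occurs, the four terms contribute $a_{i-1,j}$, $-a_{i-1,j+1}$, and $2b_{3,j}\,a_{i-1,j+1}$ (one of the four contributing nothing to this coefficient), which yields
\begin{equation}\label{eq:arecplan}
a_{i,j} \;=\; a_{i-1,j} + \big(2b_{3,j}-1\big)\,a_{i-1,j+1} \;=\; a_{i-1,j} + \frac{3-j}{j+1}\,a_{i-1,j+1},\qquad i\geq 4 .
\end{equation}
Reading off instead the coefficient of $\partial W^{i+j-2}$ from the $(z-w)^{-1}$ term gives an analogous, slightly longer, recursion for $b_{i,j}$ in terms of $a_{i-1,j}$, $a_{i-1,j+1}$, $b_{i-1,j}$, $b_{i-1,j+1}$.

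To finish, it suffices to check that the asserted formulas solve \eqref{eq:arecplan} and its counterpart for $b$, with bases $a_{3,j}=1$ (empty product) and $b_{3,j}=\frac{2}{j+1}$. For $a_{i,j}$ this is a short telescoping: inserting the formulas for $a_{i-1,j}$ and $a_{i-1,j+1}$ into the right-hand side of \eqref{eq:arecplan} and factoring out $\frac{(i-1)!}{6(j+1)(j+2)\cdots(j+i-4)}$, the remaining bracket equals $1+\frac{3-j}{j+i-3}=\frac{i}{j+i-3}$, which reassembles exactly $\frac{i!}{6(j+1)(j+2)\cdots(j+i-3)}$. For $b_{i,j}$ one can either verify the closed form directly against its recursion, or show inductively that the ratio $b_{i,j}/a_{i,j}$ equals $\frac{i-1}{i+j-2}$ -- this holds for $i=3$ and is preserved by the two recursions -- whence the formula for $b$ follows from that for $a$.

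The step I expect to be the real obstacle is not the induction but justifying that the Borcherds terms contribute exactly as claimed, i.e.\ the bookkeeping of the decomposable part. Besides $W^{i+j-3}$ and its derivatives, the OPEs $W^{i-1}(z)W^j(w)$ and $W^3(z)X(w)$ contain normally ordered monomials in $L, W^3, W^4, \dots$, and one must verify that applying $(W^3)_{(1)}$, $(W^3)_{(0)}$, or $(W^{i-1})_{(m)}$ to these never creates a fresh copy of $W^{i+j-2}$ or of $\partial W^{i+j-2}$; equivalently, that such terms are annihilated, modulo lower-weight data, by the pairing $\langle\,\cdot\,, W^{i+j-2}\rangle_{i+j-2}$. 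This is exactly what the recursive procedure behind \cite[Thm.~5.2]{LII} establishes, so I would invoke it rather than reconstruct it; the only genuinely new content here is the closed-form evaluation. In particular, one cannot shortcut the formula for $b$ by a quasi-primarity argument: for $i\geq 4$ the field $W^i$ is \emph{not} quasi-primary -- for instance $L_{(1)}W^4 = 3\,\partial L\neq 0$ -- so $b_{i,j}=\frac{i-1}{i+j-2}\,a_{i,j}$ must be read off from the recursion.
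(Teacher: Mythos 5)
Your proposal is correct and follows exactly the route the paper itself indicates: the paper gives no written proof of Lemma \ref{formofab}, merely asserting that the formulas "can be deduced from the recursive procedure in the proof of Theorem 5.2 of \cite{LII}," and your argument is a faithful and correct execution of that deduction (I checked that your recursion $a_{i,j}=a_{i-1,j}+\frac{3-j}{j+1}a_{i-1,j+1}$ reproduces $a_{4,j},a_{5,j}$, that the closed forms solve both it and the corresponding $b$-recursion, and that the ratio $b_{i,j}/a_{i,j}=\frac{i-1}{i+j-2}$ is indeed preserved). The only nitpick is the parenthetical $L_{(1)}W^4=3\,\partial L$, which should read $L_{(2)}W^4$ (i.e.\ $L_1W^4$) in the mode convention used here; it plays no role in the argument.
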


Let $p$ be an irreducible factor of $\text{det}_{N+1}$ and let $I = (p) \subseteq \mathbb{C}[c,\lambda] \cong \cW(c,\lambda)[0]$ be the corresponding ideal. Consider the quotient
$$ \cW^I(c,\lambda) = \cW(c,\lambda) / I \cdot \cW(c,\lambda),$$ where $I$ is regarded as a subset of the weight zero space $\cW(c,\lambda)[0] \cong \mathbb{C}[c,\lambda]$, and $I \cdot \cW(c,\lambda)$ denotes the vertex algebra ideal generated by $I$.  This is a vertex algebra over the ring $\mathbb{C}[c,\lambda]/I$, which is no longer simple. It contains a singular vector $\omega$ in weight $N+1$, which lies in the maximal proper ideal $\cI\subseteq \cW^I(c,\lambda)$ graded by conformal weight. If $p$ does not divide $\text{det}_{m}$ for any $m<N+1$, $\omega$ will have minimal weight among elements of $\cI$. Often, $\omega$ has the form \begin{equation} \label{sing:intro} W^{N+1} - P(L, W^3,\dots, W^{N-1}),\end{equation} possibly after localizing the ring $\mathbb{C}[c,\lambda]$, where $P$ is a normally ordered polynomial in the fields $L,W^3,\dots,$ $W^{N-1}$, and their derivatives. If this is the case, there will exist relations in the simple graded quotient $\cW_I(c,\lambda) :=\cW^I(c,\lambda) / \cI$ of the form
$$W^m = P_m(L, W^3, \dots, W^N),$$ for all $m \geq N+1$ expressing $W^m$ in terms of $L, W^3,\dots, W^N$ and their derivatives. Then $\cW_I(c,\lambda)$ will be of type $\cW(2,3,\dots, N)$. Conversely, any one-parameter vertex algebra $\cW$ of type $\cW(2,3,\dots, N)$ for some $N$ satisfying mild hypotheses, is isomorphic to $\cW_I(c,\lambda)$ for some $I = (p)$ as above, possibly after localizing. The corresponding variety $V(I) \subseteq\mathbb{C}^2$ is called the {\it truncation curve} for $\cW$.

\begin{thm} \cite[Thm 6.1]{CLIII} \label{c(n,m)asquotient} For $m \geq 1$ and $n\geq 0$, and for $m=0$ and $n\geq 3$, $\cC^{\psi}(n,m) \cong \cW_{I_{n,m}}(c,\lambda)$, where $I_{n,m}$ is described explicitly via the parametrization
\begin{equation} \label{cnmparametrization} \begin{split} c(\psi) &=  -\frac{(n \psi  - m - n -1) (n \psi - \psi - m - n +1 ) (n \psi +  \psi  -m - n)}{(\psi -1) \psi},
\\ \ 
\\ \lambda(\psi) & = -\frac{(\psi-1) \psi}{(n \psi - n - m -2) (n \psi - 2 \psi - m - n +2 ) (n \psi + 2 \psi  -m - n )}.
\end{split} \end{equation}
Moreover, after a suitable localization of the ring $\mathbb{C}[\psi]$, $\cW^{I_{n,m}}(c,\lambda)$ has a singular vector of the form
$$W^{(m+1)(m+n+1)} - P(L, W^3,\dots, W^{(m+1)(m+n+1)-1})$$ and no singular vector of lower weight, where $P$ is a normally ordered polynomial in the fields $L, W^3,\dots, W^{(m+1)(m+n+1)-1}$, and their derivatives. Therefore $\cW_{I_{n,m}}(c,\lambda)$ has minimal strong generating type $\cW(2,3,\dots, (m+1)(m+n+1)-1)$.
\end{thm}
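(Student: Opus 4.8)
The plan is to realize $\cC^\psi(n,m)$ as an explicit quotient of the universal two-parameter algebra $\cW(c,\lambda)$, to compute the resulting specialization of the parameters, and then to pin down the lowest singular vector by a character count. First I would collect the structural prerequisites: for generic $\psi$ the algebra $\cW^\psi(n,m)=\cW^k(\gs\gl_{n+m},f_{n+m})$ is simple by the free-field-limit machinery (Theorem \ref{thm:wfreelimit} and its consequences), so the coset $\cC^\psi(n,m)$ is simple as well, it carries a good conformal weight grading with one-dimensional weight-zero space, it has a Virasoro field $L$ (the $\cW$-algebra Virasoro minus the Sugawara vector of the affine subalgebra $V^{\psi-m-1}(\gg\gl_m)$), and it has a primary weight $3$ field $W^3$. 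The essential point is that $L$ and $W^3$ strongly generate $\cC^\psi(n,m)$; I would check this by passing to the large level limit and analyzing the $GL_m$-orbifold of the explicit free field algebra $\cW^{\text{free}}(\gs\gl_{n+m},f_{n+m})$. Granting that, the universal property of $\cW(c,\lambda)$ from \cite{LII} (with $W^3$ normalized so that $(W^3)_{(5)}W^3=\frac{c}{3}1$) gives a unique surjection $\cW(c,\lambda)\twoheadrightarrow\cC^\psi(n,m)$ over a ring map $c\mapsto c(\psi)$, $\lambda\mapsto\lambda(\psi)$; intersecting the kernel with the weight-zero space yields the ideal $I_{n,m}$, and since the target is a one-parameter family $I_{n,m}=(p)$ where $p$ is the irreducible polynomial defining the Zariski closure of the image of $\psi\mapsto(c(\psi),\lambda(\psi))$.

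Next I would compute the two functions. The central charge $c(\psi)$ follows from the Kac--Roan--Wakimoto formula \cite{KRW} for $\cW^k(\gs\gl_{n+m},f_{n+m})$ (with $f_{n+m}$ principal in the $\gs\gl_n$-block and trivial in the $\gs\gl_m$-block), minus the central charge of $V^{\psi-m-1}(\gg\gl_m)$, rewritten using $\psi=k+n+m$; this reproduces the displayed $c(\psi)$ after simplification. For $\lambda(\psi)$ I would compute the one OPE coefficient among $L$, $W^3$, and $W^4=(W^3)_{(1)}W^3$ that is not forced to be parameter-independent — which in \cite{LII} is an explicit rational function of $c$ and $\lambda$ — inside $\cC^\psi(n,m)$, by writing $W^3$ concretely in terms of the generators of $\cW^\psi(n,m)$ and applying the $\cW$-algebra OPEs; its $\psi$-dependence can be extracted either directly or by computing the leading term from the free field limit and interpolating. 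Eliminating $\psi$ from the pair $(c(\psi),\lambda(\psi))$ then produces $p$ and identifies $V(p)$ with the curve \eqref{cnmparametrization}, so that $I_{n,m}$ is the ideal described there.

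Finally I would locate the singular vector. Put $N:=(m+1)(m+n+1)-1$. Using the free generation of $\cW^\psi(n,m)$ by fields indexed by a basis of $(\gs\gl_{n+m})^{f_{n+m}}$, compute the graded character of $\cC^\psi(n,m)$ by decomposing $\cW^\psi(n,m)$ over its affine subalgebra, and compare it with the character of a vertex algebra freely generated in weights $2,3,\dots$: these agree through weight $N$ and first disagree in weight $N+1$, which is equivalent to $p\mid\text{det}_{N+1}$ and $p\nmid\text{det}_m$ for all $m\le N$. Hence $\cW^{I_{n,m}}(c,\lambda)$ has no singular vector of weight $\le N$ and a one-dimensional space of them in weight $N+1$. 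Since the weight-$(N+1)$ subspace of $\cW(c,\lambda)$ is spanned over $\mathbb{C}[c,\lambda]$ by $W^{N+1}$ together with normally ordered monomials in $L,W^3,\dots,W^N$ and their derivatives, and the form $\langle\,,\,\rangle_{N+1}$ is not identically degenerate on $V(p)$, after localizing $\mathbb{C}[\psi]$ so that the coefficient of $W^{N+1}$ is a unit the singular vector takes the decoupling form $W^{N+1}-P(L,W^3,\dots,W^N)$. Propagating this relation through the OPEs $W^i(z)W^j(w)$, whose leading coefficients $a_{i,j}$ and $b_{i,j}$ are the nonzero constants of Lemma \ref{formofab}, forces every $W^m$ with $m>N$ to be a normally ordered polynomial in $L,W^3,\dots,W^N$ and derivatives in the simple quotient $\cW_{I_{n,m}}(c,\lambda)\cong\cC^\psi(n,m)$; thus this algebra is of type $\cW(2,3,\dots,N)$, and minimality is immediate from the absence of singular vectors of weight $\le N$.

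The central charge and the passage to the quotient are routine; I expect the real obstacles to be two. The first is the explicit value of $\lambda(\psi)$ — recognizing which structure constant plays the role of $\lambda$ and evaluating it inside the coset in a way that exhibits its $\psi$-dependence cleanly. The second is the character bookkeeping that pins the weight of the first relation to exactly $(m+1)(m+n+1)$; this is also the computation that produces the number $(m+1)(m+n+1)-1$ appearing in the generating type, and low-rank degenerations (small $n,m$) would have to be inspected separately.
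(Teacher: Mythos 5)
First, a point of orientation: the paper you are working from does not prove this theorem at all --- it is quoted, with proof, from \cite[Thm 6.1]{CLIII} --- so your proposal has to be measured against the argument in that reference. Your overall architecture (generic simplicity, the weight grading, a Virasoro field and a weight-$3$ primary, strong generation by $L$ and $W^3$ via the free field limit, the resulting universal surjection $\cW(c,\lambda)\twoheadrightarrow\cC^{\psi}(n,m)$, identification of $I_{n,m}$ with the ideal of the parametrized curve, and the decoupling argument once the weight-$(N+1)$ singular vector is in hand) is the correct skeleton and does match \cite{CLIII}. The central charge computation is indeed routine, and the final step --- propagating the relation $W^{N+1}=P(L,W^3,\dots,W^N)$ through the OPEs using the nonvanishing of the $a_{i,j}$ from Lemma \ref{formofab} --- is exactly how the type $\cW(2,3,\dots,N)$ is deduced.

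The genuine gaps are precisely the two steps you flag as ``the real obstacles,'' and neither is closed by what you propose. For $\lambda(\psi)$: there is no closed-form expression for $W^3$ and $W^4$ as elements of the coset for general $(n,m)$, so the ``direct'' OPE evaluation is not available; and the free field limit only controls the leading behavior of the structure constants as $\psi\to\infty$, so ``interpolating'' would require an a priori degree bound on the rational function $\lambda(\psi)$ --- which is essentially the content of what is to be proved. The proof in \cite{CLIII} takes a genuinely different route here: the truncation curve $V(I_{n,m})$ is irreducible, and it is identified with the candidate parametrized curve by exhibiting infinitely many common points, obtained from the known base cases (the principal case $m=0$ from \cite{LII,ACL} and the subregular case $m=1$) together with the fact that an isomorphism of simple quotients at a special level forces the corresponding truncation curves to intersect at that point $(c,\lambda)$; two distinct irreducible curves cannot share infinitely many points. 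For the weight of the first singular vector: asserting that the graded characters ``agree through weight $N$ and first disagree in weight $N+1$'' with $N+1=(m+1)(m+n+1)$ is exactly where that number must be produced, and your sketch supplies no mechanism for it. The generic coset character is computed not by decomposing $\cW^{\psi}(n,m)$ over its affine subalgebra directly (the branching is not known) but by passing to the free field limit and invoking the classical invariant theory of $GL_m$ acting on the tensor product of standard free field algebras; the combinatorial identity pinning the first relation to weight $(m+1)(m+n+1)$, equivalently the statement that $p_{n,m}$ divides $\det_{(m+1)(m+n+1)}$ and no earlier determinant, has to be extracted from that analysis. Without these two inputs the proposal does not close.
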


It is expected that this list accounts for all the finite truncations of $\cW(c,\lambda)$, but this remains an open question.

\subsection{Orbifolds of $\cW(c,\lambda)$} By \cite[Cor. 5.3]{LII}, $\cW(c,\lambda)$ has full automorphism group $\mathbb{Z}_2$ as a vertex algebra over $\mathbb{C}[c,\lambda]$, and the action of the generator $\theta \in \mathbb{Z}_2$ is given by $\theta(L) = L$ and $\theta(W^3) = -W^3$. This forces  $\theta(W^i) = (-1)^i W^i$ for all $i \geq 3$. The action of $\mathbb{Z}_2$ on $\cW(c,\lambda)$ induces a $\mathbb{Z}_2$-grading 
$$\cW(c,\lambda) = \cW(c,\lambda)_{\overline{0}} \oplus  \cW(c,\lambda)_{\overline{1}},$$ where $\theta$ acts on $\cW(c,\lambda)_{\overline{0}}$ and $\cW(c,\lambda)_{\overline{1}}$ by $\text{id}$ and $-\text{id}$, respectively. Therefore $\cW(c,\lambda)^{\mathbb{Z}_2} = \cW(c,\lambda)_{\overline{0}}$. Moreover, all the ideals $\cI_{n,m}$ are graded by $\mathbb{Z}_2$:
$$\cI_{n,m} = \cI_{n,m,\overline{0}} \oplus \cI_{n,m,\overline{1}},\ \text{where} \  \cI_{n,m,\overline{0}} = \cI_{n,m} \cap \cW(c,\lambda)_{\overline{0}}, \ \text{and} \  \cI_{n,m,\overline{1}} = \cI_{n,m} \cap \cW(c,\lambda)_{\overline{1}}.$$
It follows that $\cC^{\psi}(n,m)^{\mathbb{Z}_2}$ can be realized as the quotient 
\begin{equation} \label{quotientrealization} \cC^{\psi}(n,m)^{\mathbb{Z}_2} \cong \cW(c,\lambda)^{\mathbb{Z}_2} /  \cI_{n,m,\overline{0}}.\end{equation}
If we pass to a suitable localization $R$ of $\mathbb{C}[c,\lambda]$, the weight $d$ subspace $\cW(c,\lambda)^{\mathbb{Z}_2}[d]$ will be a free $R$-module, and a basis $\{\omega_1,\dots, \omega_{r_d}\}$ for this space will descend to a basis for the weight $d$ subspace $\cC^{\psi}(n,m)^{\mathbb{Z}_2}[d]$, for all but finitely many choices of $n,m$.

Define the following fields
\begin{equation} \label{def:uijab} U^{2i+1, 2j+1}_{a,b} =\ :(\partial^a W^{2i+1})( \partial^b W^{2j+1}):,\qquad 1\leq i \leq j, \qquad a,b \geq 0,\end{equation} which have weight $2i+2j+a+b+2$ and clearly lie in $\cW(c,\lambda)^{\mathbb{Z}_2}$. Note that for $1 \leq i < j$ and $m\geq 0$, the following sets span the same $(m+1)$-dimensional space of weight $2i+2j+m + 2$:
\begin{equation} \label{changeofvariable1} 
\{ U^{2i+1,2j+1}_{a, m-a}|\ 0\leq m \leq a\},\quad \{ \partial^a U^{2i+1,2j+1}_{0,m-a}|\ 0\leq a \leq m\},\quad \{ \partial^a U^{2i+1,2j+1}_{m-a,0}|\ 0\leq a \leq m\}. \end{equation} Similarly, for $i\geq 1$ and $m\geq 0$, the following sets span the same $(m+1)$-dimensional spaces of weights $4i+2m+2$ and $4i+2m+3$, respectively:
\begin{equation} \label{changeofvariable2} \begin{split} & \{U^{2i+1,2i+1}_{a, 2m-a}|\ 0\leq a \leq 2m\},\qquad \{\partial^{2a} U^{2i+1,2i+1}_{0,2m-2a}|\ 0\leq a \leq m\},
\\ &  \{U^{2i+1,2i+1}_{a, 2m+1-a}|\ 0\leq m \leq a\}, \qquad \{ \partial^{2a+1} U^{2i+1,2i+1}_{0,2m-2a}|\ 0\leq a \leq m\}.\end{split}
 \end{equation}

\begin{lemma} \label{lem:longlistwinf} As a vertex algebra over the ring $\mathbb{C}[c,\lambda]$, $\cW(c,\lambda)^{\mathbb{Z}_2}$ has a strong generating set consisting of the union of the following sets:
\begin{enumerate}
\item $\{L, W^{2i}|\ i \geq 2\}$,
\item $\{U^{2i+1,2i+1}_{0,2a} |\ i\geq 1,\ a \geq 0\}$,
\item $\{U^{2i+1,2j+1}_{0,a} |\ 1 \leq i < j, \ a \geq 0\}$.
\end{enumerate}
In particular, this holds for all $c,\lambda$, so no localization of $\mathbb{C}[c,\lambda]$ is required.
\end{lemma}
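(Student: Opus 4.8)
The plan is to show that the listed fields span (over $\mathbb{C}[c,\lambda]$) all of $\cW(c,\lambda)^{\mathbb{Z}_2}$ by an induction on conformal weight, using the free generation of $\cW(c,\lambda)$ by $\{L, W^i \mid i \geq 3\}$. Since $\cW(c,\lambda)$ is freely generated, a $\mathbb{C}[c,\lambda]$-basis of $\cW(c,\lambda)[d]$ is given by normally ordered monomials $:\partial^{a_1} G^{i_1} \cdots \partial^{a_s} G^{i_s}:$ in the generators $G^i \in \{L, W^3, W^4, \dots\}$, arranged in some fixed order. The $\mathbb{Z}_2$-grading assigns each such monomial the parity $\sum_j (\deg G^{i_j})$ mod $2$, where $W^{2k+1}$ is odd and $L, W^{2k}$ are even; so $\cW(c,\lambda)^{\mathbb{Z}_2}[d]$ is spanned by the monomials containing an even number of odd-weight generators. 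The goal is to rewrite each such monomial in terms of the proposed generating set. First I would record that the even-weight generators $L$ and $W^{2i}$ ($i \geq 2$) are themselves on the list (set (1)), so any monomial built purely from these and their derivatives is automatically fine; the content is entirely in handling the odd generators $W^{2k+1}$, which must appear in pairs.

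The core reduction is: in any normally ordered monomial, group the odd-weight generators into adjacent pairs and replace each pair $:\partial^a W^{2i+1} \, \partial^b W^{2j+1}:$ (possibly after reordering the normally ordered product, which costs only lower-weight correction terms lying in the span of the generating set by induction) with the corresponding field $U^{2i+1,2j+1}_{a,b}$. Then I would invoke the change-of-variable identities \eqref{changeofvariable1} and \eqref{changeofvariable2}: for $i < j$ the span of the $U^{2i+1,2j+1}_{a,b}$ with $a+b = m$ equals the span of $\{\partial^a U^{2i+1,2j+1}_{0,\,m-a}\}$, so every such pair-field is a derivative of something in set (3) modulo lower weight; for $i = j$ the span of the $U^{2i+1,2i+1}_{a,b}$ with $a+b$ fixed equals the span of the $\partial^{2a} U^{2i+1,2i+1}_{0,2c}$ (even total derivative degree) together with $\partial^{2a+1} U^{2i+1,2i+1}_{0,2c}$ (odd total), so again every pair-field reduces to a derivative of something in set (2) modulo lower weight. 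Thus a monomial with $2r$ odd generators becomes a normally ordered product of $r$ fields from sets (2)–(3) (and their derivatives) together with even generators, plus strictly-lower-weight corrections; normally ordered products and derivatives of generators are, by definition of "strong generating set," allowed, and the corrections are handled by the inductive hypothesis. The base case (small $d$) is immediate since $\cW(c,\lambda)[0] \cong \mathbb{C}[c,\lambda]$ and in low weights the only even-parity monomials are polynomials in $L$ and a single $U$-field.

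The main obstacle is bookkeeping the "lower weight correction terms" carefully enough to make the induction airtight. Two subtleties arise. First, reordering a normally ordered product of more than two factors and reassociating it so that the two chosen odd generators become an adjacent pair produces a cascade of correction terms coming from the non-commutativity and non-associativity of normally ordered products; one must check that each correction term is genuinely of strictly lower conformal weight (this follows from the standard quasi-associativity and commutativity formulas, since every correction involves a positive-mode product $G^i_{(k)} G^j$ with $k \geq 0$, which lowers weight), and hence is handled by induction. Second, when a monomial contains a product $:W^{2i+1} W^{2i+1}:$ of two copies of the same odd generator, one must be slightly careful with the parity of the relative derivative order, which is exactly why \eqref{changeofvariable2} splits into the even and odd cases and why set (2) only lists $U^{2i+1,2i+1}_{0,2a}$ with \emph{even} second index — the fields $U^{2i+1,2i+1}_{0,\,2a+1} = \tfrac12 \partial\, U^{2i+1,2i+1}_{0,2a}$ are already derivatives and need not be listed. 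Once these two points are dispatched, the induction closes and, since nothing in the argument divides by any polynomial in $c,\lambda$, the statement holds over all of $\mathbb{C}[c,\lambda]$ with no localization, as claimed.
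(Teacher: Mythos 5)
Your overall strategy --- pair up the odd-weight generators in each invariant PBW monomial, convert the pairs to $U$-fields via \eqref{changeofvariable1}--\eqref{changeofvariable2}, and absorb the reordering corrections by induction --- is the right idea, but the induction as you have set it up does not close. The correction terms produced by quasi-commutativity and quasi-associativity are \emph{not} of strictly lower conformal weight. The commutator formula reads
$$:ab:\ -\ (-1)^{|a||b|}:ba:\ =\ \sum_{k\geq 0}\frac{(-1)^k}{(k+1)!}\,\partial^{k+1}\big(a_{(k)}b\big),$$
and while $a_{(k)}b$ for $k\geq 0$ does have weight $\mathrm{wt}(a)+\mathrm{wt}(b)-k-1$, the derivative $\partial^{k+1}$ restores the weight to $\mathrm{wt}(a)+\mathrm{wt}(b)$; the same happens for the quasi-associativity corrections. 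So every correction term lives in the \emph{same} graded piece $\cW(c,\lambda)^{\mathbb{Z}_2}[d]$ you are trying to control, and an induction on $d$ alone says nothing about it. Your parenthetical justification conflates the weight of $a_{(k)}b$ with the weight of the full correction term $\partial^{k+1}(a_{(k)}b)$.

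The quantity that does strictly decrease is the degree in Li's good increasing filtration, $\deg(W^i)=i$ (with $W^2=L$) and $\deg\big(:\partial^{k_1}W^{i_1}\cdots\partial^{k_r}W^{i_r}:\big)\leq i_1+\cdots+i_r$: from the recursive OPE structure of \cite{LII} one has $\deg\big(W^i_{(k)}W^j\big)\leq i+j-2$ for all $k\geq 0$, so each correction drops by at least $2$ in filtration degree while staying in the same conformal weight. Replacing your induction on conformal weight by an induction on this filtration degree repairs the argument; equivalently, and more cleanly, one passes to the associated graded algebra $\mathrm{gr}(\cW(c,\lambda))$, which is the free differential commutative algebra on the $W^i$, takes $\mathbb{Z}_2$-invariants there --- where your pairing argument becomes trivial classical invariant theory, with no correction terms at all --- and lifts strong generators back through the filtration. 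This is exactly how the paper proceeds. The rest of your write-up (the parity bookkeeping on PBW monomials, the use of the change-of-variable identities to normalize the $U$-fields, and the observation that nothing requires inverting a polynomial in $c,\lambda$) is fine.
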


\begin{proof} The key observation is that $\cW(c,\lambda)$ has a good increasing filtration in the sense of \cite{Li}, defined by $\text{deg}(W^i) = i$ for $i \geq 2$, where $W^2 = L$, and setting the degree of a monomial $:\partial^{k_1}W^{i_1} \cdots \partial^{k_r} W^{i_r}:$ to be at most $i_1 + \cdots + i_r$. It is apparent from the OPE algebra which is defined recursively in \cite{LII} that for all $i,j \geq 2$ and $k\geq 0$, the degree of $W^i_{(k)}W^j$ is at most $i+j -2$. It follows that the associated graded algebra $\text{gr}(\cW(c,\lambda))$ is the differential commutative algebra with generators $\{W^i|\ i \geq 2\}$. This filtration is clearly $\mathbb{Z}_2$-invariant, hence $\text{gr}(\cW(c,\lambda)^{\mathbb{Z}_2}) \cong \text{gr}(\cW(c,\lambda))^{\mathbb{Z}_2}$. Since the latter is clearly generated by the above monomials (where normally ordered is now unnecessary), the claim follows.
\end{proof}

Since $\cC^{\psi}(n,m)$ is of type $\cW(2,3,\dots, (m+1)(m+n+1)-1)$ after localizing the ring $\mathbb{C}[\psi]$, we obtain
\begin{cor} \label{cor:longlistcnm}  For $n\geq 3$ and $m=0$, and for $n,m\geq 1$, after the above localization, $\cC^{\psi}(n,m)^{\mathbb{Z}_2}$ has a strong generating set
\begin{enumerate}
\item $L, W^{2i}$, for $ 2 \leq i \leq  \frac{(m+1)(m+n+1)-1}{2}$,
\item $U^{2i+1,2i+1}_{0,2a}$, for all $1 \leq i <  \frac{(m+1)(m+n+1)-1}{2}$ and $a \geq 0$,
\item $U^{2i+1,2j+1}_{0,a}$, for all $1 \leq i < j <  \frac{(m+1)(m+n+1)-1}{2}$ and $a \geq 0$.
\end{enumerate}
\end{cor}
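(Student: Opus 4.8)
My plan is to deduce the corollary directly from Lemma \ref{lem:longlistwinf} and Theorem \ref{c(n,m)asquotient}, with a single combinatorial step that reuses the mechanism of the proof of Lemma \ref{lem:longlistwinf}. Write $N=(m+1)(m+n+1)-1$, and work throughout after the localization of $\mathbb{C}[\psi]$ furnished by Theorem \ref{c(n,m)asquotient}. First I would observe that, by \eqref{quotientrealization}, $\cC^\psi(n,m)^{\mathbb{Z}_2}\cong\cW(c,\lambda)^{\mathbb{Z}_2}/\cI_{n,m,\overline{0}}$, so the images of the strong generating set (1)--(3) of Lemma \ref{lem:longlistwinf} strongly generate $\cC^\psi(n,m)^{\mathbb{Z}_2}$. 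It then remains only to discard the high-weight generators among these, namely the $W^{2i}$ with $2i>N$, the $U^{2i+1,2i+1}_{0,2a}$ with $2i+1>N$, and the $U^{2i+1,2j+1}_{0,a}$ with $2j+1>N$; what survives are exactly the three sets of the corollary, i.e.\ those generators all of whose $W$-indices are $\le N$. Let $\cA\subseteq\cC^\psi(n,m)^{\mathbb{Z}_2}$ denote the subalgebra they strongly generate; the goal is to show every high-weight generator lies in $\cA$.

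Next I would bring in the decoupling relations. Theorem \ref{c(n,m)asquotient} supplies a singular vector of the form \eqref{sing:intro} in weight $N+1$ and none of lower weight, and, as in the paragraph after \eqref{sing:intro}, this yields relations $W^\ell=P_\ell(L,W^3,\dots,W^N)$ in $\cC^\psi(n,m)$ for all $\ell>N$, with $P_\ell$ a normally ordered polynomial in $L,W^3,\dots,W^N$ and their derivatives. Substituting these into the high-weight generators rewrites each of them as a normally ordered polynomial $Q$ in $L,W^3,\dots,W^N$ and their derivatives; and since the field being rewritten is $\mathbb{Z}_2$-invariant while $L,W^3,\dots,W^N$ are $\mathbb{Z}_2$-eigenvectors, $Q$ may be taken to be a sum of normally ordered monomials each containing an even number of odd factors $W^{2k+1}$.

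The remaining — and only substantial — step is to show every such $Q$ lies in $\cA$, and here I would rerun the argument of Lemma \ref{lem:longlistwinf} with the finite generating set $\{L,W^3,\dots,W^N\}$ in place of $\{L,W^i\mid i\ge 2\}$. Inducting on the good increasing filtration used in the proof of Lemma \ref{lem:longlistwinf}, the leading term of $Q$ in the differential commutative algebra $\text{gr}\,\cW(c,\lambda)$ is a $\mathbb{Z}_2$-invariant polynomial in the commuting symbols $\partial^k W^i$ with $i\le N$; classical $\mathbb{Z}_2$-invariant theory expresses it through the even symbols $\partial^k W^{2i}$ and the quadratics $\partial^a W^{2i+1}\,\partial^b W^{2j+1}$, and the identities \eqref{changeofvariable1} and \eqref{changeofvariable2} reduce those quadratics, modulo total derivatives, to the symbols of the generators in (1)--(3) of the corollary. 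Lifting this to a normally ordered polynomial $Q'$ in those generators, the difference $Q-Q'$ is again a $\mathbb{Z}_2$-invariant normally ordered polynomial in $L,W^3,\dots,W^N$ but of strictly lower filtration degree, so the induction closes, the base case (filtration degree $\le 2$) being the span of the $\partial^k L$. This gives $Q\in\cA$, and hence the corollary.

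The hard part will be precisely the bookkeeping in this last step: verifying that the reduction to the standard form $U^{\cdots}_{0,\ast}$ costs only total derivatives of the listed generators — which are harmless for strong generation — and that the filtration-lowering corrections stay inside the class of $\mathbb{Z}_2$-invariant polynomials in $L,W^3,\dots,W^N$ so that the induction can be iterated. Apart from this, no idea beyond Lemma \ref{lem:longlistwinf} and Theorem \ref{c(n,m)asquotient} should be required.
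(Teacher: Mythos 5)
Your proposal is correct and follows the same route the paper intends: the paper derives this corollary in one line from Lemma \ref{lem:longlistwinf} together with the fact (Theorem \ref{c(n,m)asquotient}) that $\cC^{\psi}(n,m)$ is of type $\cW(2,3,\dots,(m+1)(m+n+1)-1)$, i.e.\ by rerunning the filtration argument of Lemma \ref{lem:longlistwinf} with the finite strong generating set $L,W^3,\dots,W^N$. Your elaboration of the bookkeeping (substituting the decoupling relations $W^\ell=P_\ell$ for $\ell>N$ and checking the $\gr$-level reduction in the quotient) is exactly the content the paper leaves implicit.
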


Specializing to the case $n\geq 3$ and $m=0$, we have 
\begin{cor} \label{cor:longlistwsln} After the above localization, $\cW^k(\gs\gl_n)^{\mathbb{Z}_2}$ has a strong generating set consisting of the union of the following sets:
\begin{enumerate}
\item $\{L, W^{2i}|\ 2 \leq i \leq \frac{n}{2}\}$,
\item $\{U^{2i+1,2i+1}_{0,2a}|\ 1 \leq i < \frac{n}{2} ,\ a \geq 0\}$,
\item $\{U^{2i+1,2j+1}_{0,a} |\ 1 \leq i < j < \frac{n}{2}, \ a \geq 0\}$.
\end{enumerate}
\end{cor}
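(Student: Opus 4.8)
The plan is to obtain this as the special case $m=0$ of Corollary \ref{cor:longlistcnm}, so the entire argument is bookkeeping. First I would record that $\cC^{\psi}(n,0) = \cW^{\psi}(n,0) = \cW^{k}(\gs\gl_n)$ by the very definition of $\cC^{\psi}(n,m)$ (the $m=0$ branch, valid for $n\geq 2$, for which Theorem \ref{c(n,m)asquotient} additionally requires $n\geq 3$ in order to realize it inside $\cW(c,\lambda)$). Next I would compute the truncation weight: with $m=0$ one has $(m+1)(m+n+1)-1 = n$, so by Theorem \ref{c(n,m)asquotient} the algebra $\cW^{k}(\gs\gl_n)$ is of type $\cW(2,3,\dots,n)$, and the bound $\tfrac{(m+1)(m+n+1)-1}{2}$ appearing in all three clauses of Corollary \ref{cor:longlistcnm} becomes $\tfrac{n}{2}$. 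Substituting $m=0$ and this value of the bound into clauses (1)--(3) of Corollary \ref{cor:longlistcnm} produces verbatim the three sets (1)--(3) of the present statement, over the same localization of $\mathbb{C}[\psi]$ furnished by Theorem \ref{c(n,m)asquotient}.

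There is no genuine obstacle here: every ingredient is already in hand, namely Lemma \ref{lem:longlistwinf} for the infinite strong generating set of $\cW(c,\lambda)^{\mathbb{Z}_2}$, Theorem \ref{c(n,m)asquotient} for the type of $\cW^{k}(\gs\gl_n)$, and Corollary \ref{cor:longlistcnm} for the passage to the one-parameter quotient. The only point that carries content --- why the strong generators of $\cW(c,\lambda)^{\mathbb{Z}_2}$ involving some $W^{\ell}$ with $\ell>n$ become redundant in $\cW^{k}(\gs\gl_n)^{\mathbb{Z}_2} \cong \cW(c,\lambda)^{\mathbb{Z}_2}/\cI_{n,0,\overline{0}}$ --- is already dispatched inside the proof of Corollary \ref{cor:longlistcnm}: since $\cW^{k}(\gs\gl_n)$ is of type $\cW(2,3,\dots,n)$ each such $W^{\ell}$ is a normally ordered differential polynomial in $L,W^3,\dots,W^n$, so any normally ordered quadratic built from such fields lies in the subalgebra generated by the $W^{2i}$ and the $U$'s with all indices at most $n$, and the change-of-variable identities \eqref{changeofvariable1}--\eqref{changeofvariable2} reduce the list to the finite form claimed. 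Hence it suffices to invoke Corollary \ref{cor:longlistcnm} with $m=0$.

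As an independent sanity check (not needed for the proof) one can rederive the same set directly from the free field limit. By \eqref{freefieldlimit}, $\cW^{\text{free}}(\gs\gl_n)$ is a tensor product of copies of $\cO(1,2i)$ with weight-$i$ generators $W^i$, and the $\mathbb{Z}_2$ acts on $W^i$ by $(-1)^i$; applying classical invariant theory to the associated graded differential commutative algebra shows that $\cW^{\text{free}}(\gs\gl_n)^{\mathbb{Z}_2}$ is strongly generated by the even $W^{2i}$ together with the normally ordered products $:(\partial^a W^{2i+1})(\partial^b W^{2j+1}):$. Lemma \ref{lem:orbifold} and Lemma \ref{lem:passagestrong} then transport such a set to $\cW^{k}(\gs\gl_n)^{\mathbb{Z}_2}$ after a suitable localization, and \eqref{changeofvariable1}--\eqref{changeofvariable2} trim it to the stated form. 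This route reproves part of Lemma \ref{lem:longlistwinf} and Corollary \ref{cor:longlistcnm}, so the specialization argument above is the economical one.
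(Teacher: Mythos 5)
Your proposal is correct and follows exactly the paper's route: the paper obtains this corollary purely by specializing Corollary \ref{cor:longlistcnm} to $m=0$, where $(m+1)(m+n+1)-1 = n$ turns the bound into $\tfrac{n}{2}$, with all the real content residing in Lemma \ref{lem:longlistwinf} and Theorem \ref{c(n,m)asquotient}. Your additional remarks on why generators involving $W^{\ell}$ with $\ell>n$ become redundant, and the free-field sanity check, are consistent with the paper but not needed.
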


\section{The structure of $\cW^k(\gs\gl_n)^{\mathbb{Z}_2}$}
Before we consider the structure of $\cW(c,\lambda)^{\mathbb{Z}_2}$ and its quotients $\cC^{\psi}(n,m)^{\mathbb{Z}_2}$ in general, we begin by studying the special case $\cC^{\psi}(n,0)^{\mathbb{Z}_2} \cong \cW^k(\gs\gl_n)^{\mathbb{Z}_2}$. This has the advantage that its large level limit is an orbifold of a free field algebra. In the case $n=3$, $\cW^{k}(\gs\gl_3)^{\mathbb{Z}_2}$ has a (minimal) strong generating set $\{L, U^{3,3}_{0,2a}|\ a = 0,1,2,3\}$, and hence is of type $\cW(2,6,8,10,12)$ \cite{AL}. For the rest of this section, we assume that $n\geq 4$. 

Recall that $\cW^k(\gs\gl_n)^{\mathbb{Z}_2}$ is a deformable family, and 
$$\cW^{\text{free}}(\gs\gl_n)^{\mathbb{Z}_2} \cong \lim_{k\rightarrow \infty} \cW^k(\gs\gl_n)^{\mathbb{Z}_2} \cong \bigg(\lim_{k\rightarrow \infty} \cW^k(\gs\gl_n)\bigg)^{\mathbb{Z}_2} \cong  \bigg( \bigotimes_{i=1}^n \cO(1, 2i)\bigg)^{\mathbb{Z}_2}.$$ Moveover, since $\theta \in \mathbb{Z}_2$ acts on the generator $W^i \in \cO(1,2i)$ by $\theta(W^i) = (-1)^i W^i$, we may rewrite this in the form
$$\cW^{\text{free}}(\gs\gl_n)^{\mathbb{Z}_2} \cong 
  \left\{
\begin{array}{ll}
\bigotimes_{i=1}^{n/2} \cO(1,4i) \bigotimes \bigg(\bigotimes_{i=1}^{n/2} \cO(1,4i-2)\bigg)^{\mathbb{Z}_2},\  & n\ \text{even},
\bigskip
\\ 
\bigotimes_{i=1}^{(n-1)/2} \cO(1,4i) \bigotimes \bigg(\bigotimes_{i=1}^{(n-1)/2}\cO(1,4i+2)\bigg)^{\mathbb{Z}_2},\  & n\ \text{odd}.
\\
\end{array} 
\right.$$ 
We shall use the same notation $U^{2i+1, 2j+1}_{a,b}$ to denote the elements $:(\partial^a W^{2i+1})( \partial^b W^{2j+1}):$ in both $\cW^k(\gs\gl_n)^{\mathbb{Z}_2}$ and $\cW^{\text{free}}(\gs\gl_n)^{\mathbb{Z}_2}$ when no confusion can arise.

\subsection{Weak generators for $\cW^k(\gs\gl_n)^{\mathbb{Z}_2}$} First, we will find a minimal weak generating set for $\cW^{\text{free}}(\gs\gl_n)^{\mathbb{Z}_2}$.
\begin{thm} \label{thm:longlistweakwslnfree} For all $n\geq 4$, $\cW^{\text{free}}(\gs\gl_n)^{\mathbb{Z}_2}$ has a minimal weak generating set $S$ consisting of the union of the following sets:
\begin{enumerate}
\item $\{W^{2i}|\ 1 \leq i \leq \frac{n}{2}\}$,
\item $\{U^{3,2i+1}_{0,0}|\ 1 \leq i < \frac{n}{2}\}$.
\end{enumerate}
\end{thm}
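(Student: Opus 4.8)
The plan is to work entirely inside the free field algebra $\cW^{\text{free}}(\gs\gl_n) \cong \bigotimes_{i=1}^n \cO(1,2i)$, where the OPEs are completely explicit: $W^i(z)W^i(w) \sim (z-w)^{-2i}$ and distinct $W^i$ commute. Set $\cV := \cW^{\text{free}}(\gs\gl_n)^{\mathbb{Z}_2}$. First I would verify that $S$ actually weakly generates $\cV$. By Corollary \ref{cor:longlistwsln} (applied in the free field limit, which is legitimate since that corollary's strong generating set is inherited from Lemma \ref{lem:longlistwinf} and survives the limit), $\cV$ is strongly generated by $L, W^{2i}$, the diagonal products $U^{2i+1,2i+1}_{0,2a}$, and the off-diagonal products $U^{2i+1,2j+1}_{0,a}$. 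So it suffices to produce each of these strong generators from $S$ under iterated $n$-th products. The even fields $W^{2i}$ and $L = \half\sum :W^iW^i:$-type combinations are in $S$ or are easy normally-ordered polynomials in the $W^{2i}$; the content is in the odd-odd products. The key computation is that for odd generators $W^{2i+1}, W^{2j+1}$ of a free field algebra, the circle products $(U^{3,2i+1}_{0,0})_{(k)} U^{3,2j+1}_{0,0}$ produce, among other terms, a nonzero multiple of $:W^{2i+1}W^{2j+1}:$ plus lower-complexity terms, because the two copies of $W^3$ can contract with each other via $W^3(z)W^3(w)\sim (z-w)^{-6}$, leaving the product of the surviving fields. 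Iterating and using the triangular structure in $i+j$ (a Gram/linear-algebra argument over $\mathbb{C}$, since the relevant constants like $a_{i,j}$ from Lemma \ref{formofab} are explicit and nonzero generically — but here, in the free field limit, the contraction coefficients are literal binomial numbers and manifestly nonzero), one recovers all $U^{2i+1,2j+1}_{0,a}$ and $U^{2i+1,2i+1}_{0,2a}$. The $a$-derivatives are obtained by taking higher circle products with $L$, or directly since $(\partial^a W)$ appears with nonzero coefficient in $L_{(0)}$-iterates.

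Second, I would prove minimality. The even part: $W^{2i}$ cannot be removed because in the associated graded commutative algebra $\gr(\cV)$ (using the good filtration $\deg W^i = i$ from Lemma \ref{lem:longlistwinf}), a weak generating set must in particular generate $\gr(\cV)$ as a differential algebra, and the elements of $\gr(\cV)$ of degree $2i$ that are genuinely new — not expressible via lower-degree generators and their products/derivatives — include the class of $W^{2i}$ itself (for $n$ even and $2i \leq n$, or $n$ odd similarly). For the off-diagonal $U^{3,2i+1}_{0,0}$, I would argue that the lowest-weight element of $\cV$ involving the pair $(W^3, W^{2i+1})$ bilinearly, of weight $2i+4$, is precisely $U^{3,2i+1}_{0,0}$, and it cannot arise from circle products of strictly-lower-weight or "simpler" generators without already having access to both $W^3$ and $W^{2i+1}$ in that bilinear combination; a weight and bidegree bookkeeping (tracking the multidegree in the $n$ distinct free fields, which is preserved by all vertex operations) shows no other generator or product of generators hits this slot. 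The cleanest way to organize this is to track, for each generator, its "support" — the set of indices $j$ such that $W^j$ appears — together with conformal weight, and show each element of $S$ occupies a slot that no normally-ordered polynomial in the other elements of $S$ (and their iterated circle products) can reach.

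The main obstacle I expect is the minimality of the off-diagonal family $\{U^{3,2i+1}_{0,0}\}$: one must rule out the possibility that, say, $U^{3,7}_{0,0}$ is generated by $W^{2i}$, $U^{3,5}_{0,0}$, and diagonal pieces, via some clever combination of circle products. This requires a genuine structural invariant rather than a dimension count, and the multidegree-in-free-fields grading is the right tool — circle products of $:W^3W^5:$ with itself can only produce fields supported on $\{3,5\}$ (and on whatever shows up in $L$), never introduce $W^7$. So the support argument should close it, but spelling out that $W^3$ itself (hence $U^{3,2i+1}_{0,0}$ with the correct second factor) is forced, and that no $W^{2i+1}$ for $i \geq 2$ can be weakly generated without its own off-diagonal partner, is the delicate bookkeeping step. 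I would also remark that the appearance of $W^3$ (weight $3$, odd) as the unique odd generator paired with all others is dictated by the fact that $W^3_{(1)}W^{i-1} = W^i$ in $\cW(c,\lambda)$, so $W^3$ together with enough even fields already generates everything as a vertex algebra — but weakly one still needs the explicit bilinear seeds $U^{3,2i+1}_{0,0}$ to reach the $\mathbb{Z}_2$-even subalgebra, since $W^3$ itself is odd and excluded.
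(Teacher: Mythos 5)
Your proposal follows essentially the same route as the paper: reduce weak generation to producing the strong generators of Corollary \ref{cor:longlistwsln} by iterated circle products, using the explicit free-field contraction of the two $W^3$ factors to reach $U^{2i+1,2j+1}_{0,0}$, and prove minimality by the multidegree (``support'') invariant, which is preserved by all vertex algebra operations because distinct free fields have trivial OPEs. One detail to repair: in the free field limit $L=W^2$ is itself a free generator with $L(z)W^{2i+1}(w)\sim 0$, so $L_{(k)}U^{3,3}_{0,0}=0$ for all $k\geq 0$, and circle products with $L$ cannot produce the translates $U^{3,3}_{0,2a}$ (which are not derivatives of $U^{3,3}_{0,0}$). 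The paper instead raises the index $a$ by repeatedly applying modes of the bilinears themselves, e.g.\ $(U^{3,3}_{0,0})_{(3)}U^{3,3}_{0,2a}$ and $(U^{3,3}_{0,0})_{(4)}U^{3,2i+1}_{a,0}$, which is exactly the ``iterated circle products of the bilinears'' you also mention; with that substitution your argument goes through.
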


\begin{proof}
First, we need to show that $S$ is indeed a weak generating set. It is straightforward to check that for all $a\geq 0$,
$$(U^{3,3}_{0,0})_{(3)} U^{3,3}_{0,2a} = \frac{(4 + a) (15 + 8 a + 4 a^2) }{30} U^{3,3}_{0,2a+2} + \cdots,$$ where remaining terms are of the form $\partial^{2i} U^{3,3}_{0,2a+2 -2i}$ for $i = 1,2,\dots,a+1$. It follows by induction that all terms $U^{3,3}_{0,2a}$ can be generated from $U^{3,3}_{0,0}$.

Next, for all $a \geq 0$ and $2\leq i < \frac{n}{2}$, we have 
$$(U^{3,3}_{0,0})_{(4)} U^{3,2i+1}_{a,0} = \frac{(2 + a) (3 + a) (4 + a) (5 + a)}{60} U^{3,2i+1}_{a+1,0} + \cdots ,$$ where remaining terms are of the form $\partial^{i} U^{3,2i+1}_{0,a+1-i}$ for $i = 1,2,\dots,a+1$. Since the sets $\{\partial^{i} U^{3,2i+1}_{0,a-i}|\ i = 0,1,\dots, a\}$ and $\{\partial^{i} U^{3,2i+1}_{a-i,0}|\ i = 0,1,\dots, a\}$ span the same vector space, it follows that all fields $U^{3,2i+1}_{0,a}$ can be generated from $S$.

Next, for all $2\leq i < \frac{n}{2}$, we have 
$$(U^{3,2i+1}_{0,0})_{(5)} U^{3,2i+1}_{0,0} = U^{2i+1,2i+1}_{0,0} + \frac{1}{ (4 i -5)! 4 ( i -1)} U^{3,3}_{4i-4,0}.$$
This shows that $U^{2i+1,2i+1}_{0,0}$ lies in the algebra generated by $S$. By the same argument as above, $U^{2i+1,2i+1}_{0,0}$ generates $U^{2i+1,2i+1}_{0,2a}$ for all $a \geq 1$.

Finally, for all $2\leq i < j < \frac{n}{2}$ and $a\geq 0$, we have 
$$(U^{3,2i+1}_{0,0})_{(5)} U^{3,2j+1}_{a,0} = \frac{(1 + a) (2 + a) (3 + a) (4 + a) (5 + a) }{120} U^{2i+1, 2j+1}_{a,0}.$$ Since all the strong generators appearing in Corollary \ref{cor:longlistwsln} can be generated by $S$, this shows that $S$ is indeed a weak generating set. 

As for minimality, all the generators $W^{2i}$ are needed because they do not appear in the OPEs of the other generators $W^j$ for $\cW^{\text{free}}(\gs\gl_n)$, for $j \neq 2i$. Moreover, if any of the generators $U^{3,2i+1}_{0,0}$ were omitted, we would not be able to generate any field $U^{2i+1, 2i+i}_{a,b}$, $U^{2i+1, 2j+i}_{a,b}$ for $i<j$, or $U^{2j+1, 2i+i}_{a,b}$ for $j<i$, since $W^{2i+1}$ does not appear in the OPEs of $W^{\ell}$ for $\ell \neq 2i+1$. \end{proof}

\begin{remark} We may also consider the infinite tensor product $\bigotimes_{i=1}^{\infty} \cO(1, 2i)$, which has a $\mathbb{Z}_2$-action given by $\theta(W^i) = (-1)^i W^i$ for all $i\geq 2$. Then the same argument shows that $\big(\bigotimes_{i=1}^{\infty} \cO(1, 2i) \big)^{\mathbb{Z}_2}$ has a minimal weak generating set $\{W^{2i}, U^{3,2i+1}_{0,0}|\ i\geq 1\}$.
\end{remark}

An immediate consequence of Lemma \ref{lem:passageweak} and Theorem \ref{thm:longlistweakwslnfree} is 
\begin{cor} \label{cor:longlistweakwslnfree} After a suitable localization, for all $n\geq 4$, $\cW^{k}(\gs\gl_n)^{\mathbb{Z}_2}$ has a weak generating set consisting of the union of the following sets:
\begin{enumerate}
\item $\{W^{2i}|\ 1 \leq i \leq \frac{n}{2}\}$,
\item $\{U^{3,2i+1}_{0,0}|\ 1 \leq i < \frac{n}{2}\}$.
\end{enumerate}
\end{cor}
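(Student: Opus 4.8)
The plan is to obtain this as a direct descent from the free field limit, exactly as in the proof of strong finite generation of orbifolds in \cite{CLIII}. Recall from Section~\ref{sec:freewalg} that after rescaling the generators of $\cW^k(\gs\gl_n)$ by $\tfrac{1}{\sqrt k}$, there is a deformable family $\cW(\gs\gl_n)$ over $F_K$ with formal parameter $\kappa=\sqrt k$ such that $\cW^k(\gs\gl_n)\cong\cW(\gs\gl_n)/(\kappa-\sqrt k)\cW(\gs\gl_n)$ for every $k\neq 0$, and whose large level limit is $\cW^{\text{free}}(\gs\gl_n)$. The automorphism $\theta$ generating $\mathbb{Z}_2$ acts on $\cW^k(\gs\gl_n)$ by $\theta(W^i)=(-1)^iW^i$ for all $k$, hence commutes with the rescaling and acts trivially on $F_K$; so Lemma~\ref{lem:orbifold}, applied with $G=\mathbb{Z}_2$, shows that $\cW(\gs\gl_n)^{\mathbb{Z}_2}$ is again a deformable family with
$$\lim_{\kappa\to\infty}\big(\cW(\gs\gl_n)^{\mathbb{Z}_2}\big)\;\cong\;\cW^{\text{free}}(\gs\gl_n)^{\mathbb{Z}_2}.$$

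Next I would invoke Theorem~\ref{thm:longlistweakwslnfree}, which for $n\geq 4$ gives the explicit weak generating set $S=\{W^{2i}\mid 1\le i\le \tfrac n2\}\cup\{U^{3,2i+1}_{0,0}\mid 1\le i<\tfrac n2\}$ for $\cW^{\text{free}}(\gs\gl_n)^{\mathbb{Z}_2}$. Each member of $S$ admits an obvious lift to $\cW(\gs\gl_n)^{\mathbb{Z}_2}$: the field $W^{2i}$ is one of the (rescaled) generators, and $U^{3,2i+1}_{0,0}=\,:W^3 W^{2i+1}:\,$ is a fixed normally ordered monomial in them. Because these expressions involve only the already-rescaled generators, they define genuine elements of the deformable family, and the projection $\phi$ to the limit sends them precisely to the corresponding members of $S$. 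Lemma~\ref{lem:passageweak} then applies verbatim: there is an at most countable set $S'\subseteq\mathbb{C}$ containing $K$ such that $F_{S'}\otimes_{F_K}\cW(\gs\gl_n)^{\mathbb{Z}_2}$ is weakly generated by the lifts of $S$.

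Finally I would specialize $\kappa=\sqrt k$ for any $k$ with $\sqrt k\notin S'$; this excludes only countably many values of $k$, which is what "after a suitable localization" means (and $\psi$ differs from $k$ by the constant $n$). The quotient map $F_{S'}\otimes_{F_K}\cW(\gs\gl_n)^{\mathbb{Z}_2}\to\cW^k(\gs\gl_n)^{\mathbb{Z}_2}$ carries a weak generating set to a weak generating set, yielding the claim. The only point demanding any care is the verification that the elements of $S$ lift to honest elements of the deformable family, i.e. that their $\kappa$-dependent coefficients do not degenerate in the limit; but this is immediate here since $W^{2i}$ and $U^{3,2i+1}_{0,0}$ are manifestly $\kappa$-independent words in the rescaled generators, and everything else is the bookkeeping already packaged into Lemmas~\ref{lem:orbifold} and~\ref{lem:passageweak}.
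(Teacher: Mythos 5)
Your proposal is correct and is exactly the paper's argument: the paper derives this corollary as an immediate consequence of Lemma~\ref{lem:passageweak} applied to the weak generating set for $\cW^{\text{free}}(\gs\gl_n)^{\mathbb{Z}_2}$ from Theorem~\ref{thm:longlistweakwslnfree}, with Lemma~\ref{lem:orbifold} supplying the identification of the limit of the orbifold with the orbifold of the limit. You have merely spelled out the bookkeeping that the paper leaves implicit.
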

We will see later that this weak generating set is not minimal, and we will reduce it to a minimal weak generating set.

\subsection{Strong generators for $\cW^{k}(\gs\gl_n)^{\mathbb{Z}_2}$} Next, we will find a strong generating set for $\cW^{k}(\gs\gl_n)^{\mathbb{Z}_2}$, again starting with the large level limit.
\begin{thm}  \label{thm:strongwnfree} 
For $n \geq 4$, $\cW^{\text{free}}(\gs\gl_n)^{\mathbb{Z}_2}$ has a minimal strong generating set $T$ consisting of
\begin{enumerate}
\item $W^{2i}$, for $1\leq i \leq \frac{n}{2}$,
\item $U^{3,3}_{0,2a}$, for $a = 0,1,2,3$,
\item $U^{3,2i+1}_{0,a}$, for all $2 \leq i < j < \frac{n}{2}$ and $a = 0,1,2,3,4,5,6$,
\item $U^{2i+1,2i+1}_{0,2a}$, for all $2\leq i < \frac{n}{2}$ and $a = 0,1,2$,
\item $U^{2i+1,2j+1}_{0,a}$, for all $2 \leq i < j < \frac{n}{2}$ and $a = 0,1,2,3,4,5$.
\end{enumerate}
\end{thm}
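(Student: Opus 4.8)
The plan is to start from the (infinite) strong generating set for $\cW^{\text{free}}(\gs\gl_n)^{\mathbb{Z}_2}$ provided by Corollary \ref{cor:longlistwsln}, namely $\{L, W^{2i}\}$ together with the three families $U^{2i+1,2i+1}_{0,2a}$ ($a\geq 0$) and $U^{2i+1,2j+1}_{0,a}$ ($i<j$, $a\geq 0$), and to show that for $a$ above the stated bounds each of these fields can be rewritten, modulo the lower ones, as a normally ordered polynomial in $T$ and its derivatives together with the already-reduced fields. The mechanism is the same ``decoupling'' phenomenon used in \cite{AL,KL}: inside a free field algebra $\cO(1,2i)\otimes\cO(1,2j)$ the repeated products of two generators are eventually expressible through lower-order ones because of the simple quadratic OPE $W^i(z)W^i(w)\sim(z-w)^{-2i}$. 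First I would work entirely in the relevant small tensor factors. For a single factor $\cO(1,2i+1)$ one has relations among the $U^{2i+1,2i+1}_{0,2a}$: the key computation is that $L^{\cO}=\frac{1}{2}U^{2i+1,2i+1}_{\text{(appropriate)}}$-type combinations, or rather the quasi-primary decomposition of $:\partial^a W^{2i+1}\,\partial^b W^{2i+1}:$, forces $U^{2i+1,2i+1}_{0,2a}$ for $a\geq 3$ to be a derivative-polynomial in $U^{2i+1,2i+1}_{0,0},U^{2i+1,2i+1}_{0,2},U^{2i+1,2i+1}_{0,4}$, which is the classical fact that the $\mathbb{Z}_2$-orbifold of a single rank-one free boson of weight $i+\tfrac12$ is strongly generated in the four indicated weights when $2i+1=3$, and in fact needs only three of them once $i\geq 2$ because the conformal vector is absent; this yields clause (4) with bound $a=0,1,2$ and clause (2) with bound $a=0,1,2,3$ for the $W^3$ factor.

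Next I would handle the mixed quadratics $U^{2i+1,2j+1}_{0,a}$ for $i<j$. These live in $\cO(1,2i+1)\otimes\cO(1,2j+1)$, and the relevant fact is that the quasi-primary generators of this orbifold in bidegree $(1,1)$ in the two generators are $:W^{2i+1}W^{2j+1}:$, $:W^{2i+1}\partial W^{2j+1}:-\text{(scaled)}:\partial W^{2i+1}W^{2j+1}:$, and their higher analogues only up to total derivative order determined by the gap between the weights; concretely the space of quasiprimary fields of the form $\sum c_{a,b}:\partial^a W^{2i+1}\partial^b W^{2j+1}:$ at each weight is one-dimensional, and the operator $\partial$ together with the lower quasiprimaries spans everything, so that $U^{2i+1,2j+1}_{0,a}$ for $a\geq 6$ is a derivative-polynomial in $U^{2i+1,2j+1}_{0,0},\dots,U^{2i+1,2j+1}_{0,5}$ plus terms already generated. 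This gives clause (5). Clause (3), the $U^{3,2i+1}_{0,a}$, is the same computation with $2i+1=3$ replaced in the first slot, and the slightly larger bound $a\leq 6$ (rather than $5$) reflects that the pairing here is with the distinguished weight-$3$ field and one extra quasiprimary survives; I would verify the exact bound by the same dimension count of the space of quasiprimaries at each weight. Throughout, one must check that the ``$\cdots$'' correction terms produced by these reductions involve only $W^{2i}$'s, $U$'s with strictly smaller first index, or $U$'s of the same type with smaller second index — this is where the good increasing filtration from the proof of Lemma \ref{lem:longlistwinf} is used to organize the induction and guarantee no circularity.

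Finally, minimality. The fields $W^{2i}$ are indispensable since they are primaries of their weight not appearing in any OPE of the other listed generators (as already argued in the proof of Theorem \ref{thm:longlistweakwslnfree}). For the $U$-type generators I would argue by a dimension count: using the explicit free field realization one can compute the graded character of $\cW^{\text{free}}(\gs\gl_n)^{\mathbb{Z}_2}$ and compare with the character of the free differential polynomial algebra on $T$; equality in low weights, together with the fact that removing any single listed $U$ would drop the dimension of the corresponding graded piece (because that $U$ is the unique new quasiprimary of its weight and bidegree type), forces minimality. The main obstacle I anticipate is pinning down the exact cutoffs $a\leq 3$, $a\leq 6$, $a\leq 2$, $a\leq 5$ in clauses (2)--(5): these are not uniform and come from a careful analysis of when the span of $\{\partial^c(\text{lower quasiprimaries})\}$ first exhausts the full weight space, which requires an honest (though finite and mechanical) computation of quasiprimary decompositions in each $\cO(1,2i+1)\otimes\cO(1,2j+1)$; I would isolate this as a lemma and do the linear algebra there, leaving the inductive bookkeeping clean.
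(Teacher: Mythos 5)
Your overall scaffolding (start from the infinite strong generating set of Corollary \ref{cor:longlistwsln} and construct decoupling relations above the stated cutoffs) matches the paper, but the mechanism you propose for producing those relations is wrong, and this is a fatal gap rather than a bookkeeping issue. You propose to work ``entirely in the relevant small tensor factors,'' decoupling $U^{2i+1,2i+1}_{0,2a}$ inside the single factor $\cO(1,4i+2)^{\mathbb{Z}_2}$ and $U^{2i+1,2j+1}_{0,a}$ inside $\cO(1,4i+2)\otimes\cO(1,4j+2)$, on the grounds that derivatives of lower quasiprimaries eventually exhaust the quadratic weight spaces. That is false: the span of $\{:\partial^a W^{2i+1}\partial^b W^{2j+1}:\ |\ a+b=m\}$ is $(m+1)$-dimensional and equals the span of $\{\partial^a U^{2i+1,2j+1}_{0,m-a}\}$, so exactly one new quasiprimary appears in \emph{every} weight and the quadratics never become linearly redundant. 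Decoupling can only come from quartic normally ordered relations (quantum corrections to classical identities in the associated graded algebra), and a weight count shows these do not exist inside the small factors until far above the stated cutoffs: the first relation internal to $\cO(1,4i+2)^{\mathbb{Z}_2}$ occurs in weight $8i+6$, so it cannot touch $U^{2i+1,2i+1}_{0,6}$ (weight $4i+8$) for any $i\geq 2$; in particular your claim that the single-factor orbifold ``needs only three'' generators for $i\geq 2$ is incorrect --- on its own it needs all of $U^{2i+1,2i+1}_{0,0},\dots,U^{2i+1,2i+1}_{0,2(2i+1)}$, i.e.\ $2i+2$ of them. An analogous count rules out decoupling $U^{2i+1,2j+1}_{0,6}$ inside the two relevant factors.

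The cutoffs $a\leq 3,6,2,5$ are achieved only by quartic relations that essentially involve the weight-$3$ generator, e.g.\ $:U^{3,3}_{0,2a}U^{2j+1,2j+1}_{0,0}:-\,:U^{3,2j+1}_{0,2a}U^{3,2j+1}_{0,0}:$ for clause (4) and $:U^{3,2i+1}_{0,r}U^{3,2j+1}_{0,0}:-\,:U^{3,3}_{0,0}U^{2i+1,2j+1}_{4,0}:$ for clause (5); these are three-factor relations whose low weight is possible only because $W^3$ has small conformal weight. The paper then propagates the base-case relations upward inductively by applying raising operators: $(U^{3,3}_{0,0})_{(1)}$ for the diagonal family, and for the mixed family the mode $\nu_{(1)}$ of an element $\nu$ of a larger Heisenberg algebra which does not lie in the free field subalgebra but preserves it. Neither ingredient appears in your outline. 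Your minimality discussion is closer to workable (the paper argues by showing there are no normally ordered relations below weight $14$ and that the few candidates in the relevant weights fail to decouple the listed fields), but as written the existence half of your argument would not go through.
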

 
 \begin{proof} First, we need to show that $T$ is indeed a strong generating set. In order to handle the cases of $n$ even and odd simultaneously, we write $d = \frac{n}{2}-1$ when $n$ is even and $d  = \frac{n-1}{2}$ when $n$ is odd. In both cases we need to find minimal strong generators of $\big(\bigotimes_{i=1}^{d} \cO(1,4i+2)\big)^{\mathbb{Z}_2}$. In view of Corollary \ref{cor:longlistwsln}, it suffices to construct decoupling relations for 
 \begin{enumerate}
 \item $U^{3,3}_{2a,0}$, for $a\geq 4$, 
 \item $U^{3,2i+1}_{0,a}$, for $2 \leq i < j \leq d$ and $a\geq 7$,
 \item $U^{2i+1,2i+1}_{0,2a}$, for $2\leq i \leq d$ and $a\geq 3$,
 \item $U^{2i+1,2j+1}_{0,a}$, for $2 \leq i < j \leq d$ and $a \geq 6$.
\end{enumerate}
In other words, these fields can all be expressed as normally ordered polynomials in the elements of $T$ and their derivatives. 

A calculation shows that we have the following relation in weight $14$:
 \begin{equation}\label{eq:n=4wt14}
:{U}^{3,3}_{0,0}{U}^{3,3}_{1,1}:-:{U}^{3,3}_{0,1}{U}^{3,3}_{0,1}:\ =-\frac{19}{4032} U^{3,3}_{0,8}+ \frac{23}{1440} \partial^2 U^{3,3}_{0,6}  -\frac{23}{576} \partial^4 U^{3,3}_{0,4} + \frac{23}{480}  \partial^6 U^{3,3}_{0,2} -\frac{391}{40320} \partial^8 U^{3,3}_{0,0}.\end{equation}
Since $U^{3,3}_{0,1}=\frac{1}{2} \partial U^{3,3}_{0,0}$ and $U^{3,3}_{1,1}=-U^{3,3}_{0,2}+\frac{1}{2}\partial ^{2}U^{3,3}_{0,0}$,
\eqref{eq:n=4wt14} can be written in the form
\begin{equation}\label{eq6}
{U}^{3,3}_{0,8}=P_{4}({U}^{3,3}_{0,0},{U}^{3,3}_{0,2},{U}^{3,3}_{0,4},{U}^{3,3}_{0,6}),
\end{equation} where $P_4$ is a normally ordered polynomial in $\{U_{2a,0}|\ a = 0,1,2,3\}$ and their derivatives. This is the desired decoupling relation for ${U}^{3,3}_{0,8}$.

Similarly, we have the following relation in weight $16$:
\begin{equation}\label{eq:n=4wt16} \begin{split} 
:{U}^{3,3}_{0,0}{U}^{3,3}_{2,2}:-:{U}^{3,3}_{0,2}{U}^{3,3}_{0,2}: \ = & -\frac{1}{7200} U^{3,3}_{0,10} -\frac{1}{72} \partial^2 U^{3,3}_{0,8} + \frac{7}{96} \partial^4 U^{3,3}_{0,6}   -\frac{7}{32} \partial^6 U^{3,3}_{0,4} \\ & + \frac{17}{64} \partial^8 U^{3,3}_{0,2}   -\frac{31}{576} \partial^{10} U^{3,3}_{0,0}. \end{split}
\end{equation}
Since $U^{3,3}_{2,2}=U^{3,3}_{0,4}-2\partial ^{2}U^{3,3}_{0,2}+\frac{1}{2} \partial U^{3,3}_{0,0}$, we can use this and \eqref{eq6}, to rewrite \eqref{eq:n=4wt16} in the form
 \begin{equation}\label{eq7*}
{U}^{3,3}_{0,10}=P_{5}({U}^{3,3}_{0,0},{U}^{3,3}_{0,2},{U}^{3,3}_{0,4},{U}^{3,3}_{0,6}).
\end{equation}
This is the desired decoupling relation for ${U}^{3,3}_{0,10}$.

We now assume inductively that there exist similar decoupling relation 
\begin{equation} \label{eq:n=4induction} {U}^{3,3}_{0,2a}=P_{a}({U}^{3,3}_{0,0},{U}^{3,3}_{0,2},{U}^{3,3}_{0,4},{U}^{3,3}_{0,6}),\end{equation} for all $0\leq a \leq n$, with the cases $a = 0,1$ being the base cases. Observe that the operator $(U^{3,3}_{0,0})_{(1)}$ raises weight by $4$, and satisfies
\begin{equation}\label{n=4raising}
(U^{3,3}_{0,0})_{(1)} U^{3,3}_{0,2m}= \frac{5 + m}{30}U^{3,3}_{0,2m+4}+ \cdots,
\end{equation} where the remaining term is a linear combination of $\partial^{2i} U^{3,3}_{0,2m+4-2i}$ for $1\leq i \leq m+2$.
Applying it to the relation ${U}^{3,3}_{0,2n-2}=P_{n-1}({U}^{3,3}_{0,0},{U}^{3,3}_{0,2},{U}^{3,3}_{0,4},{U}^{3,3}_{0,6})$ yields
\begin{equation}\label{n=4raising2} \frac{3+ 2n}{30}U^{3,3}_{0,2n+2}+ \cdots = (U^{3,3}_{0,0})_{(1)} P_{n-1}({U}^{3,3}_{0,0},{U}^{3,3}_{0,2},{U}^{3,3}_{0,4},{U}^{3,3}_{0,6}).\end{equation} Moreover, since $ (U^{3,3}_{0,2a})_{(0)}  U^{3,3}_{0,2b}$ is a total derivative for all $a,b,r,s$, it is apparent that the right hand side of \eqref{n=4raising2} does not depend on $U^{3,3}_{0,2n+2}$. Therefore we can rescale \eqref{n=4raising2} and rewrite it in the form
$$U^{3,3}_{0,2n+2} = Q(U^{3,3}_{0,0}, U^{3,3}_{0,2},\dots, U^{3,3}_{0,2n}),$$ where $Q$ is a normally ordered polynomial in $U^{3,3}_{0,0}, U^{3,3}_{0,2},\dots, U^{3,3}_{0,2n}$ and their derivatives. Finally, using the relations \eqref{eq:n=4induction} for $a \leq n$, we can rewrite this in the form 
$${U}^{3,3}_{0,2a}=P_{a}({U}^{3,3}_{0,0},{U}^{3,3}_{0,2},{U}^{3,3}_{0,4},{U}^{3,3}_{0,6}),$$ as desired. Therefore we have decoupling relations for $U^{3,3}_{0,2a}$ for $a\geq 4$.

Next, we compute
 \begin{equation}\label{odd1}
 \begin{split}
& :{U}^{3,3}_{0,0}{U}^{3,2i+1}_{1,0}:\ -\ :{U}^{3,3}_{0,1}{U}^{3,2i+1}_{0,0}:\ =
\frac{11}{5040}{U}^{3,2i+1}_{0,7},
\\ & :{U}^{3,3}_{0,0}{U}^{3,2i+1}_{2,0}:\ \ -:{U}^{3,3}_{0,2}{U}^{3,2i+1}_{0,0}:\ =
\frac{1}{2880}{U}^{3,2i+1}_{0,8},\end{split}
\end{equation}
which are decoupling relations for $U^{3,2i+1}_{0,7}$ and $U^{3,2i+1}_{0,8}$, for all $i\geq 2$. 

We shall now construct higher decoupling relations 
\begin{equation} \label{eq:decoup3i} U^{3,2i+1}_{0,r} = P_{i,r}, \ \text{for all}\ i\geq 2\ \text{and} \ r > 8,\end{equation} where $P_{i,r}$ is a normally ordered polynomial in the elements of $T$ and their derivatives. We regard $\cO_{\text{ev}}(1, 4i+2)$ as a subalgebra of the rank one Heisenberg algebra $\cH(1)$ with generator $\alpha^{2i+1}$ satisfying
$$ \alpha^{2i+1} (z) \alpha^{2i+1}(w) \sim (z-w)^{-2}.$$ Then $\bigotimes_{i=1}^{d} \cO_{\text{ev}}(1, 4i+2)$ is a subalgebra of the rank $d$ Heisenberg algebra $\cH(d)$, where
the generators $W^{2i+1}$ are given by
$$W^{2i+1} = \frac{\epsilon}{\sqrt{(4i+1)!}} \partial^{2i} \alpha^{2i+1}.$$
Next, let $$\nu = \sum_{i=1}^d :(\partial^2 \alpha^{2i+1}) \alpha^{2i+1}: \ \in  \cH(d)^{\mathbb{Z}_2}.$$ 
Note that $\nu$ does not lie in the subalgebra $\big(\bigotimes_{i=1}^{d} \cO_{\text{ev}}(1, 4i+2)\big)^{\mathbb{Z}_2}$; however, the mode $\nu_{(1)}$ preserves this subalgebra and raises the weight by $2$. A calculation shows that for all $a \geq 0$,
\begin{equation} \label{outsideoperator} \nu_{(1)} U^{3, 2i+1}_{0,a} = (12 + 2 a + 4 i)U^{3, 2i+1}_{0,a+2} + \cdots,\end{equation} where the remaining terms are of the form $\partial^{b}U^{3, 2i+1}_{0,a+2-b}$ for $1\leq b \leq a+2$. 

By applying $\nu_{(1)}$ repeatedly to \eqref{odd1}, we can inductively construct all decoupling relations
$$U^{3,2i+1}_{0,r} = P_{i,r},$$ using the same procedure as the construction of the relations for $U^{3,3}_{0,2a}$ above.

Next, we have the following calculation:
\begin{equation} \begin{split} 
& :{U}^{2i+1,2i+1}_{0,r}{U}^{2j+1,2j+1}_{0,0}:-:{U}^{2i+1,2j+1}_{0,r}{U}^{2i+1,2j+1}_{0,0}:\ =
\\ &-\frac{1}{2(2j+1)!}{U}^{2i+1,2i+1}_{0,4j+2+r} -\frac{1}{(4i+1)!(4i+2 +r)}{U}^{2j+1, 2j+1}_{0,4i+2+r}.
 \end{split}\end{equation}
 Specializing to the case $i = 1$ and $r = 2a$, we get the relation
 \begin{equation} \begin{split} &
:{U}^{3,3}_{0,2a}{U}^{2j+1,2j+1}_{0,0}:-:{U}^{3,2j+1}_{0,r}{U}^{3,2j+1}_{0,0}:\ =-\frac{1}{2(2j+1)!}{U}^{3,3}_{0,4j+2+2a} -\frac{1}{5!(6 +2a)}{U}^{2j+1, 2j+1}_{0,6+2a}.
 \end{split}\end{equation}
Since we already have decoupling relations for ${U}^{3,3}_{0,4j+2+2a}$, this yields the desired decoupling relations for ${U}^{2j+1, 2j+1}_{0,6+2a}$, for all $j\geq 2$ and $a\geq 0$.
 
Finally, for $2\leq i<j\leq d$ we compute
\begin{equation}\label{lastone}
:{U}^{3,2i+1}_{0,r}{U}^{3,2j+1}_{0,0}:\ -\ :{U}^{3,3}_{0,0}{U}^{2i+1,2j+1}_{4,0}:\ =
\frac{1}{720}{U}^{2i+1,2j+1}_{6+r,0}.
 \end{equation}
 Since we already have decoupling relations for ${U}^{3,2i+1}_{0,r}$ for all $r\geq 7$, this yields decoupling relations for ${U}^{2i+1, 2j+1}_{6+r,0}$, for all $2\leq i < j \leq d$ and $r \geq 0$. The same then holds for ${U}^{2i+1, 2j+1}_{0,6+r}$. This completes the proof that $T$ is a strong generating set for $\cW^{\text{free}}(\gs\gl_n)^{\mathbb{Z}_2}$.
 
As for the minimality, there can be no decoupling relation for $U^{3,3}_{0,2a}$ for $a=0,1,2$ since there are no relations of weight less than $14$. Similarly, if there were a decoupling relation for $U^{3,2i+1}_{0,a}$ for some $a \leq 6$, it would need to involve the variable $W^{2i+1}$ and have weight at most $2i+4 + a$. The relation of this kind of minimal weight has the form $:U^{3,3}_{0,0} U^{3,2i+1}_{1,0} :- :U^{3,3}_{1,0} U^{3,2i+1}_{0,0}: \ = \cdots$, but this relation has weight $2i+11$. The other possible decoupling relations are ruled out similarly.
 \end{proof}

\begin{remark} We may also consider the infinite tensor product $\bigotimes_{i=1}^{\infty} \cO(1, 2i)$, which has a $\mathbb{Z}_2$-action given by $\theta(W^i) = (-1)^i W^i$ for all $i\geq 2$. The same argument shows that $\big(\bigotimes_{i=1}^{\infty} \cO(1, 2i) \big)^{\mathbb{Z}_2}$ has a minimal strong generating set consisting of
\begin{enumerate}
\item $W^{2i}$, for $i \geq 1$,
\item $U^{3,3}_{0,2a}$, for $a = 0,1,2,3$,
\item $U^{3,2i+1}_{0,a}$, for all $i\geq 2$ and $a = 0,1,2,3,4,5,6$,
\item $U^{2i+1,2i+1}_{0,2a}$, for all $i \geq 2$ and $a = 0,1,2$,
\item $U^{2i+1,2j+1}_{0,a}$, for all $2 \leq i < j$ and $a = 0,1,2,3,4,5$.
\end{enumerate}
In particular, $\big(\bigotimes_{i=1}^{\infty} \cO(1, 2i) \big)^{\mathbb{Z}_2}$ is of type $\cW(2, 4, 6^2, 8^3, 9, 10^5, \dots k^{n_k},\dots)$. In this notation, for all integers $k\geq 11$,
$$n_k =   \left\{
\begin{array}{lll}
3m-2 &  & k = 4m,
\smallskip
\\ 3m-5  &  & k = 4m+1,
\smallskip
\\ 3m &  & k = 4m+2,
\smallskip
\\ 3m - 4 &  & k = 4m+3.
\end{array} 
\right.$$
\end{remark}

The following is immediate from Lemma \ref{lem:passagestrong} and Theorem \ref{thm:strongwnfree}.
\begin{cor}  \label{cor:strongwn} 
After a suitable localization, for all $n\geq 4$, $\cW^{\text{k}}(\gs\gl_n)^{\mathbb{Z}_2}$ has a strong generating set consisting of
\begin{enumerate}
\item $W^{2i}$, for $1\leq i \leq \frac{n}{2}$,
\item $U^{3,3}_{0,2a}$, for $a = 0,1,2,3$,
\item $U^{3,2i+1}_{0,a}$, for all $2 \leq i < j < \frac{n}{2}$ and $a = 0,1,2,3,4,5,6$,
\item $U^{2i+1,2i+1}_{0,2a}$, for all $2\leq i < \frac{n}{2}$ and $a = 0,1,2$,
\item $U^{2i+1,2j+1}_{0,a}$, for all $2 \leq i < j < \frac{n}{2}$ and $a = 0,1,2,3,4,5$.
\end{enumerate}
\end{cor}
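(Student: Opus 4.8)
The plan is to deduce this directly from the deformable-family passage lemma, so essentially no new computation is needed beyond Theorem~\ref{thm:strongwnfree}. First I would invoke Lemma~\ref{lem:orbifold} with $\gg=\gs\gl_n$, $f=f_n$ the principal nilpotent, and $G=\mathbb{Z}_2$ (which is reductive and, as in Lemma~\ref{lem:orbifold}, acts trivially on the ring of rational functions of $\kappa=\sqrt{k}$): the family $\cW(\gs\gl_n,f_n)^{\mathbb{Z}_2}$ is again a deformable family over $F_K$, and
$$\lim_{\kappa\to\infty}\big(\cW(\gs\gl_n,f_n)^{\mathbb{Z}_2}\big)\;\cong\;\cW^{\text{free}}(\gs\gl_n)^{\mathbb{Z}_2}\;\cong\;\Big(\bigotimes_{i=1}^{n}\cO(1,2i)\Big)^{\mathbb{Z}_2},$$
using~\eqref{freefieldlimit}. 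This family has a weight grading with $\cW(\gs\gl_n,f_n)^{\mathbb{Z}_2}[0]\cong F_K$, so the hypotheses of Lemma~\ref{lem:passagestrong} are satisfied.

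Next, by Theorem~\ref{thm:strongwnfree} the limit $\cW^{\text{free}}(\gs\gl_n)^{\mathbb{Z}_2}$ is strongly generated by the finite set $T$ listed there, namely the fields $W^{2i}$, $U^{3,3}_{0,2a}$, $U^{3,2i+1}_{0,a}$, $U^{2i+1,2i+1}_{0,2a}$ and $U^{2i+1,2j+1}_{0,a}$ in the stated ranges. For each of these I would take as its preimage under $\phi$ the field given by the \emph{same} normally-ordered-polynomial formula in the generators $L,W^3,W^4,\dots$ of $\cW(\gs\gl_n,f_n)$; this is legitimate because $\phi$ is the large-level limit map and carries those generators to the corresponding $W^i\in\cO(1,2i)$, so the notational coincidence of the symbol $U^{2i+1,2j+1}_{a,b}$ used in both algebras is compatible with $\phi$. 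Denote this lifted set by $T'$.

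Finally I would apply Lemma~\ref{lem:passagestrong} to $\cB=\cW(\gs\gl_n,f_n)^{\mathbb{Z}_2}$ with $U=T$ and the chosen lifts $T'$: there is an at-most-countable set $S_n\subseteq\mathbb{C}$ containing $K$ such that $F_{S_n}\otimes_{F_K}\cW(\gs\gl_n,f_n)^{\mathbb{Z}_2}$ is strongly generated by $T'$. Setting $S=\bigcup_{n\geq 4}S_n$ (still at most countable) and specializing $\kappa=\sqrt{k}$ for any $k$ with $\sqrt{k}\notin S$ gives that $\cW^k(\gs\gl_n)^{\mathbb{Z}_2}\cong\cW(\gs\gl_n,f_n)^{\mathbb{Z}_2}/(\kappa-\sqrt{k})$ is strongly generated by the images of $T'$, which are exactly the fields (1)--(5) of the statement; this countable exclusion of levels is the ``suitable localization''. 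There is no genuine obstacle in this argument: the only point meriting a moment of care is checking that the two occurrences of the notation $U^{2i+1,2j+1}_{a,b}$ correspond under $\phi$, and that is immediate from the construction of the deformable family together with~\eqref{freefieldlimit}.
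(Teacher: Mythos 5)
Your proposal is correct and is essentially the paper's own argument: the paper derives Corollary \ref{cor:strongwn} in one line as an immediate consequence of Lemma \ref{lem:passagestrong} applied to the strong generating set of Theorem \ref{thm:strongwnfree}, with Lemma \ref{lem:orbifold} implicitly identifying the limit of the orbifold with the orbifold of the limit. Your write-up just makes the choice of lifts and the localization explicit, which is fine (the union over $n$ is unnecessary since the statement allows the localization to depend on $n$, but it is harmless).
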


One might speculate that the strong generating set for $\cW^k(\gs\gl_n)^{\mathbb{Z}_2}$ given by Corollary \ref{cor:strongwn} is minimal for generic values of $k$ as well. In fact, this is the case for $n = 4,5,6$, which was first conjectured in the physics literature in \cite{B-H}.
 \begin{cor} \label{cor:phys456} After a suitable localization, for $n=4,5,6$, $\cW^k(\gs\gl_n)^{\mathbb{Z}_2}$ has the following minimal strong generating type:
 \begin{enumerate}
\item $\cW^k(\mathfrak{sl}_4)^{\mathbb{Z}_2}$ is of type $\cW(2,4,6,8,10,12)$.
\item $\cW^k(\mathfrak{sl}_5)^{\mathbb{Z}_2}$ is of type $\cW(2,4,6,8^2,9,10^3,11,12^3,13,14^2)$.
\item $\cW^k(\mathfrak{sl}_6)^{\mathbb{Z}_2}$ is of type $\cW(2,4,6^2,8^2,9,10^3,11,12^3,13,14^2)$.
 \end{enumerate}
 \end{cor}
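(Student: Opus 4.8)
The plan is to read off the strong generating types from Corollary \ref{cor:strongwn} by a direct count, and then to establish minimality by a finite, bounded-weight computation, guided by the minimality of the analogous set in the free field limit.

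First I would specialize Corollary \ref{cor:strongwn} to $n=4,5,6$ and sort the resulting strong generators by conformal weight. For $n=4$ these are $L,W^4$ in weights $2,4$ and $U^{3,3}_{0,2a}$ for $a=0,1,2,3$ in weights $6,8,10,12$, so the type is $\cW(2,4,6,8,10,12)$. For $n=5$ one additionally gets $U^{3,5}_{0,a}$ for $a=0,\dots,6$ in weights $8,\dots,14$ and $U^{5,5}_{0,2a}$ for $a=0,1,2$ in weights $10,12,14$; collecting equal weights gives $\cW(2,4,6,8^2,9,10^3,11,12^3,13,14^2)$. For $n=6$ the only further generator is $W^6$ in weight $6$, so the type is $\cW(2,4,6^2,8^2,9,10^3,11,12^3,13,14^2)$. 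This gives the asserted strong generating types, and it remains to prove these sets are minimal.

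For minimality, the generators $L$ and $W^{2i}$ are immediate: as in the minimality argument of Theorem \ref{thm:strongwnfree}, in $\text{gr}\,\cW^k(\gs\gl_n) \cong \mathbb{C}[W^2,\dots,W^n]$ (as a differential commutative algebra) the element $W^{2i}$ is one of the free differential generators, hence does not lie in the subalgebra generated by the other strong generators (the $U$'s contribute only products of the odd $W$'s), so $W^{2i}$ cannot be decoupled. For the $U$-type generators one must rule out a decoupling relation expressing some $U^{2i+1,2j+1}_{0,a}$ of the list through the others. Here I would invoke Theorem \ref{thm:strongwnfree}: the same list is a minimal strong generating set for the free field limit $\cW^{\text{free}}(\gs\gl_n)^{\mathbb{Z}_2}$, and, tracking the weights in that proof, the first decoupling relations in $\cW^{\text{free}}(\gs\gl_n)^{\mathbb{Z}_2}$ occur in weight $14$ or higher and only eliminate the fields $U^{3,3}_{0,8}$, $U^{3,2i+1}_{0,a}$ with $a\ge 7$, and so on, none of which occurs in the list for $n\le 6$. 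It then suffices to check that no further decoupling relation is gained upon passing to generic $k$. Since $\cW^k(\gs\gl_n)$ is freely generated of type $\cW(2,3,\dots,n)$ for $n=4,5,6$ by Theorem \ref{c(n,m)asquotient}, the graded character of $\cW^k(\gs\gl_n)^{\mathbb{Z}_2}$ is explicit and coincides with that of $\cW^{\text{free}}(\gs\gl_n)^{\mathbb{Z}_2}$; one then compares, in each weight $w$ up to $12$ (for $n=4$) and up to $14$ (for $n=5,6$), the dimension of $\cW^k(\gs\gl_n)^{\mathbb{Z}_2}[w]$ with the dimension of the span of normally ordered monomials in the listed generators of weight $<w$ together with their derivatives, and verifies that the codimension equals the number of listed generators of weight $w$ for generic $k$. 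This is a finite calculation, feasible precisely because $\cW^k(\gs\gl_n)$ is freely generated in these cases, and it yields the asserted minimality.

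The main obstacle is exactly this last verification at generic $k$. Minimality in the free field limit does not by itself force minimality at generic $k$: a decoupling relation valid for generic $k$ may have coefficients that diverge as $k\to\infty$, so that it is invisible in the limit. This is what happens for $\cW^k(\gs\gl_7)^{\mathbb{Z}_2}$, whose minimal strong generating set is strictly smaller than the one furnished by Corollary \ref{cor:strongwn}, which is why the finite check above cannot be omitted. What makes the check manageable for $n=4,5,6$ is that every listed generator has weight at most $14$, so only finitely many graded components are involved, and throughout that range the freely generated algebra $\cW^k(\gs\gl_n)$ is entirely explicit.
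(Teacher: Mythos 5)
Your proposal is correct and follows essentially the same route as the paper: both reduce minimality to the observation that the first normally ordered relation among the listed generators occurs in weight $14$, where there is exactly one relation, already spent on eliminating $U^{3,3}_{0,8}$, so every generator in the lists for $n=4,5,6$ survives. Your extra care about the distinction between minimality in the free field limit and minimality at generic $k$ is well placed (it is precisely what fails for $n=7$), although in these low weights the number of relations in each graded piece is determined by the character and is therefore the same as in the limit, which is the shortcut the paper takes.
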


 \begin{proof} In the case $n=4$ there is nothing to prove because there are no normally ordered relations below weight $14$. For $n=5,6$, there are three fields of weight $14$  given by Corollary \ref{cor:longlistwsln}, namely $U^{3,3}_{0,8}$, $U^{5,5}_{0,4}$, $U^{3,5}_{0,6}$, and only one normally ordered relation in weight $14$, which has the form $:U^{3,3}_{0,0} U^{3,3}_{1,1}: - :U^{3,3}_{1,0} U^{3,3}_{1,0}: = \cdots$. As we have already seen, this can be used to eliminate one of these generators, namely $U^{3,3}_{0,8}$, and the other two are needed because there are no more relations in weight $14$. \end{proof} 

\begin{remark} In the case $n=4$, this statement follows alternatively from \cite[Cor. 6.1]{KL}, due to the isomorphism of Lie algebras $\gs\gl_4 \cong \gs\go_6$. In \cite[Section 10]{KL}, it was stated erroneously that the only nontrivial one-parameter quotient of $\cW(c,\lambda)^{\mathbb{Z}_2}$ which is isomorphic to a quotient of the even spin algebra $\cW^{\text{ev}}(c,\lambda)$, is $N^k(\gs\gl_2)$, which is the case $\cC^{\psi}(1,1)^{\mathbb{Z}_2}$ and is of type $\cW(2,4,6,8,10)$. The correct statement is that there are two such examples, namely, $\cC^{\psi}(1,1)^{\mathbb{Z}_2}$ and  $\cC^{\psi}(4,0)^{\mathbb{Z}_2}$.
\end{remark}

When $n \geq 7$, the strong generating set for $\cW^k(\gs\gl_n)^{\mathbb{Z}_2}$ given by Corollary \ref{cor:strongwn} is {\it not} minimal. To illustrate this phenomenon we consider the case of $n=7$ in detail.
\begin{thm} \label{thm:sl7min} After a suitable localization, $\cW^k(\gs\gl_7)^{\mathbb{Z}_2}$ has a minimal strong generating set
\begin{enumerate}
\item $L, W^4, W^6$,
\item $U^{3,3}_{0,2a}$, for $a = 0,1,2,3$,
\item $U^{3,5}_{0,a}$ for $a = 0,1,2,3,4,5,6$,
\item $U^{3,7}_{0,a}$, for $a = 0,1,2,3,4,5$,
\item $U^{5,5}_{0,2a}$ for $a = 0,1,2$,
\item $U^{5,7}_{0,a}$, for $a = 0,1,2,3$,
\item $U^{7,7}_{0,2a}$, for $a = 0,1$,
\end{enumerate}
and is therefore of type $\cW(2,4,6^2,8^2, 9, 10^4, 11^2, 12^5, 13^3, 14^5, 15^2, 16)$. In particular, the remaining fields in weights $16$, $17$, and $18$ which appear in Corollary \ref{cor:strongwn}, are not needed for $\cW^k(\gs\gl_7)^{\mathbb{Z}_2}$.
\end{thm}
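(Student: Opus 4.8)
The plan is to start from the $33$-element strong generating set of $\cW^k(\gs\gl_7)^{\mathbb{Z}_2}$ given by Corollary \ref{cor:strongwn} --- namely $L, W^4, W^6$, together with $U^{3,3}_{0,2a}$ for $0\le a\le 3$, the fields $U^{3,5}_{0,a}$ and $U^{3,7}_{0,a}$ for $0\le a\le 6$, the fields $U^{5,5}_{0,2a}$ and $U^{7,7}_{0,2a}$ for $0\le a\le 2$, and $U^{5,7}_{0,a}$ for $0\le a\le 5$ --- and to eliminate the four fields $U^{3,7}_{0,6}$, $U^{5,7}_{0,4}$ (weight $16$), $U^{5,7}_{0,5}$ (weight $17$), and $U^{7,7}_{0,4}$ (weight $18$); these are exactly the members of the above set that do not occur in the asserted minimal set. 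The natural setting is the free realization $\cW^k(\gs\gl_7)=\cC^{\psi}(7,0)$ of Theorem \ref{c(n,m)asquotient}: since the singular vector lies in weight $(0+1)(0+7+1)=8$, this algebra is freely generated by $L,W^3,W^4,W^5,W^6,W^7$, with $W^m=P_m(L,W^3,\dots,W^7)$ for all $m\ge 8$, and the $\mathbb{Z}_2$-action is $\theta(W^i)=(-1)^iW^i$; in particular all normally ordered products are explicit rational functions of $\psi$ (equivalently of $c,\lambda$).

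The main step is to produce the four decoupling relations. In weight $16$ one computes, using the OPEs of $\cW^k(\gs\gl_7)$ together with the relations $W^m=P_m$ to reduce everything to the spanning set of Corollary \ref{cor:longlistwsln}, two linearly independent normally ordered relations in which the $2\times 2$ coefficient matrix of $(U^{3,7}_{0,6},U^{5,7}_{0,4})$ over $\mathbb{C}(\psi)$ has generically nonzero determinant; inverting it gives, after a further localization of $\mathbb{C}[\psi]$, expressions for $U^{3,7}_{0,6}$ and $U^{5,7}_{0,4}$ as normally ordered polynomials in the other $31$ generators and their derivatives. A further computation in weight $17$ --- obtainable, for instance, by applying a weight-raising mode of the type used in the proof of Theorem \ref{thm:strongwnfree} to the weight $16$ relations --- decouples $U^{5,7}_{0,5}$, and one more computation in weight $18$, into whose right-hand side one first substitutes the three preceding relations, decouples $U^{7,7}_{0,4}$. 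Together with Corollary \ref{cor:strongwn}, these four relations show that the $29$ fields of the theorem strongly generate $\cW^k(\gs\gl_7)^{\mathbb{Z}_2}$ after localization, and the stated type $\cW(2,4,6^2,8^2,9,10^4,11^2,12^5,13^3,14^5,15^2,16)$ follows by counting the generators weight by weight. The reason $\gs\gl_7$ behaves differently from $\gs\gl_4,\gs\gl_5,\gs\gl_6$ is that all four of these relations degenerate in the large level limit $k\to\infty$: by Theorem \ref{thm:strongwnfree} the fields $U^{3,7}_{0,6},U^{5,7}_{0,4},U^{5,7}_{0,5},U^{7,7}_{0,4}$ still belong to the minimal strong generating set of $\cW^{\mathrm{free}}(\gs\gl_7)^{\mathbb{Z}_2}$, so the reduction is invisible in the free field limit and cannot be obtained through Lemma \ref{lem:passagestrong}.

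For minimality, recall that strong generation forces $\cW^k(\gs\gl_7)^{\mathbb{Z}_2}[w]$ to be spanned by normally ordered products (with derivatives) of generators of weight $\le w$. Since the generators in our set have weights only in $\{2,4,6,8,9,10,11,12,13,14,15,16\}$, it is enough to verify, for each such $w$, that the generators of weight $w$ are linearly independent in the quotient of $\cW^k(\gs\gl_7)^{\mathbb{Z}_2}[w]$ by the span of the normally ordered products of generators of weight $<w$. In the free realization $\cW^k(\gs\gl_7)=\cW(2,3,\dots,7)$, the dimension $\dim\cW^k(\gs\gl_7)^{\mathbb{Z}_2}[w]$ is a routine character computation (it is the $\theta$-fixed part of the vertex algebra freely generated by six fields of weights $2,3,4,5,6,7$), and the comparison is a finite piece of linear algebra. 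It amounts to checking that, besides the relations underlying Corollary \ref{cor:strongwn}, the only additional decoupling relations available in weights $\le 18$ are two in weight $16$, one in weight $17$, and one in weight $18$, and that they eliminate exactly $U^{3,7}_{0,6},U^{5,7}_{0,4},U^{5,7}_{0,5},U^{7,7}_{0,4}$ and touch no generator of the minimal set.

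The hard part is the explicit determination of the four weight $16$, $17$, $18$ decoupling relations. Because they vanish in the tractable free field limit, one has no choice but to carry them out inside the degree-$7$ principal $\cW$-algebra $\cW^k(\gs\gl_7)$ (or inside $\cW(c,\lambda)$, specializing afterwards via Theorem \ref{c(n,m)asquotient}), tracking the $\psi$-dependence throughout and checking that the relevant coefficients --- in particular the $2\times 2$ determinant governing weight $16$ and the leading coefficients in weights $17$ and $18$ --- do not vanish identically, so that the required localization of $\mathbb{C}[\psi]$ only excludes finitely many values. The minimality check, although finite and essentially mechanical once the dimensions are known, also involves sizeable linear systems in weight $16$.
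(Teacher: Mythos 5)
Your proposal is correct in substance and follows essentially the same route as the paper: start from the $33$-element set of Corollary \ref{cor:strongwn}, produce four additional decoupling relations in weights $16$, $17$, $18$ that are invisible in the free-field limit (you correctly observe that $U^{3,7}_{0,6}, U^{5,7}_{0,4}, U^{5,7}_{0,5}, U^{7,7}_{0,4}$ all survive into the minimal set of $\cW^{\text{free}}(\gs\gl_7)^{\mathbb{Z}_2}$, so Lemma \ref{lem:passagestrong} cannot see them), and verify minimality by weight-by-weight linear algebra. Two points of divergence are worth flagging. First, in weight $17$ the modes used in the proof of Theorem \ref{thm:strongwnfree} raise the weight by $2$, $3$ or $4$, never by $1$, so they cannot carry a weight-$16$ relation to weight $17$; the paper instead exhibits three quadratic identities in weight $17$ directly and checks that the $3\times 3$ coefficient matrix on $(U^{3,5}_{0,9},U^{3,7}_{0,7},U^{5,7}_{0,5})$ is generically invertible. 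Second, and more seriously, your ``one more computation in weight $18$'' glosses over the one genuinely non-obvious step: no identity of the commutator type $:\!AB\!:-:\!A'B'\!:$ in weight $18$ can involve $U^{7,7}_{0,4}$ (its degree in Li's filtration is $14$, exceeding the degree of any such identity), which is exactly why the paper notes that $U^{7,7}_{0,4}$ cannot be decoupled in the free limit. The paper's device is to apply $(W^4)_{(1)}$ to the weight-$16$ decoupling relation for $U^{5,7}_{0,4}$, using $(W^4)_{(1)}U^{5,7}_{0,4}=a_{4,5}U^{7,7}_{0,4}+\cdots$ together with the fact that $(W^4)_{(1)}W^7$ is composite in $\cW^k(\gs\gl_7)$; the resulting relation has cubic normally ordered terms and would not be found by searching only among quadratic identities. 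Your fallback of comparing $\dim\cW^k(\gs\gl_7)^{\mathbb{Z}_2}[18]$ against the full span of normally ordered monomials in lower-weight generators would still detect this relation, so the plan is sound, but you should either adopt the $(W^4)_{(1)}$ trick or make explicit that the weight-$18$ search must range over monomials of arbitrary length, not just quadratics.
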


\begin{proof}
The same argument as the proof of Corollary \ref{cor:phys456} shows that there are no further decoupling relations in weight $14$ (or below). Therefore the five fields given by Corollary \ref{cor:strongwn}, namely, $U^{3,5}_{0,6}$, $U^{5,5}_{0,4}$, $U^{3,7}_{0,4}$, $U^{5,7}_{0,2}$, and $U^{7,7}_{0,0}$, are all needed.

In weight $15$, there are two fields given by Corollary \ref{cor:strongwn}, $U^{3,7}_{0,5}$ and $U^{5,7}_{0,3}$. A priori, there is only one possible relation that might allow further decoupling, namely,
$$:U^{3,3}_{0,0} U^{3,3}_{2,1}: - :U^{3,3}_{2,0} U^{3,3}_{1,0}: \ = \cdots.$$ However, a computation shows that this relation does not allow either $U^{3,7}_{0,5}$ and $U^{5,7}_{0,3}$ to be decoupled, so both are needed in $\cW^k(\gs\gl_7)^{\mathbb{Z}_2}$.

In weight $16$ there are six generators from Corollary \ref{cor:longlistwsln}, namely $U^{3,3}_{0,10}$, $U^{3,5}_{0,8}$, $U^{3,7}_{0,6}$, $U^{5,5}_{0,6}$, $U^{5,7}_{0,4}$, and $U^{7,7}_{0,2}$. Three of these generators, namely $U^{3,3}_{0,10}$, $U^{3,5}_{0,8}$, and $U^{5,5}_{0,6}$, can be eliminated in $\cW^{\text{free}}(\gs\gl_7)^{\mathbb{Z}_2}$, and therefore in $\cW^k(\gs\gl_7)^{\mathbb{Z}_2}$ for generic $k$, using the relations
\begin{equation} \begin{split}  :U^{3,5}_{0,0} U^{3,5}_{0,0}: -: U^{3,3}_{0,0} U^{5,5}_{0,0} &  = \cdots, 
\\  :U^{3,3}_{0,0} U^{3,5}_{2,0}: - :U^{3,3}_{2,0} U^{3,5}_{0,0}: &  = \cdots, 
\\  :U^{3,3}_{0,0} U^{3,3}_{2,2}: - :U^{3,3}_{2,0} U^{3,3}_{2,0}: &  = \cdots.
\end{split}\end{equation}
Note that there is no relation in weight $16$ that involves the field $W^7$. For any relation of weight $16$ among the generators of $\cW^{\text{free}}(\gs\gl_7)^{\mathbb{Z}_2}$, $U^{3,7}_{0,6}$ and $U^{5,7}_{0,4}$ cannot appear because $W^7$ does not appear in the OPEs of $W^i$ for $i<7$. However, in $\cW^k(\gs\gl_7)^{\mathbb{Z}_2}$, we have five relations of the form
\begin{equation} \begin{split} 
& :U^{3,5}_{0,0} U^{3,5}_{0,0}: -: U^{3,3}_{0,0} U^{5,5}_{0,0}:\ = a_{11} U^{3,3}_{0,10} + a_{12} U^{5,5}_{0,6} + a_{13} U^{3,5}_{0,8} + a_{14} U^{3,7}_{0,6}+ a_{15} U^{5,7}_{0,4} + \cdots,
\\ & :U^{3,3}_{0,0} U^{3,5}_{2,0}: - :U^{3,3}_{2,0} U^{3,5}_{0,0}: \ = a_{21} U^{3,3}_{0,10} + a_{22} U^{5,5}_{0,6} + a_{23} U^{3,5}_{0,8} + a_{24} U^{3,7}_{0,6} + a_{25} U^{5,7}_{0,4} + \cdots,
\\ & :U^{3,3}_{0,0} U^{3,5}_{1,1}: - :U^{3,3}_{1,0} U^{3,3}_{1,0}: \ = a_{31} U^{3,3}_{0,10} + a_{32} U^{5,5}_{0,6} + a_{33} U^{3,5}_{0,8} + a_{34} U^{3,7}_{0,6} + a_{35} U^{5,7}_{0,4} + \cdots,
\\ & :U^{3,3}_{0,0} U^{3,3}_{3,1}: - :U^{3,3}_{3,0} U^{3,3}_{1,0}: \ = a_{41} U^{3,3}_{0,10} +a_{42} U^{5,5}_{0,6} + a_{43} U^{3,5}_{0,8} + a_{44} U^{3,7}_{0,6} + a_{45} U^{5,7}_{0,4} + \cdots,
\\ & :U^{3,3}_{0,0} U^{3,3}_{2,2}: - :U^{3,3}_{2,0} U^{3,3}_{2,0}: \ = a_{51} U^{3,3}_{0,10} + a_{52} U^{5,5}_{0,6} + a_{53} U^{3,5}_{0,8} + a_{54} U^{3,7}_{0,6} + a_{55} U^{5,7}_{0,4} + \cdots,
\end{split}\end{equation}
where the remaining terms do not depend on $U^{3,3}_{0,10}$, $U^{5,5}_{0,6}$, $U^{3,5}_{0,8}$,  $U^{3,7}_{0,6}$, and $U^{5,7}_{0,4}$. The matrix $[a_{ij}]$ can be computed explicitly by computer and it is nonsingular for generic values of $k$. Therefore suitable linear combinations of these relations will yield decoupling relations for $U^{3,7}_{0,6}$ and $U^{5,7}_{0,4}$. It is easy to check that $U^{7,7}_{0,2}$ cannot appear in any relation, so this is the only strong generator in weight $16$ that is needed.

In weight $17$, there are three generators from Corollary \ref{cor:longlistwsln}, namely $U^{3,5}_{0,9}$, $U^{3,7}_{0,7}$, $U^{5,7}_{0,5}$. Only two of these generators, namely $U^{3,5}_{0,9}$ and $U^{3,7}_{0,7}$, can be eliminated in $\cW^{\text{free}}(\gs\gl_7)$. However, in $\cW^k(\gs\gl_7)^{\mathbb{Z}_2}$, we have three relations
\begin{equation} \begin{split} & :U^{3,3}_{1,0} U^{5,5}_{0,0}: - :U^{3,5}_{1,0} U^{3,5}_{0,0}:\  = a_{11} U^{3,5}_{0,9} + a_{12} U^{3,7}_{0,7} + a_{13} U^{5,7}_{0,5} + \cdots, 
\\ & :U^{3,3}_{0,0} U^{3,7}_{1,0}: - :U^{3,3}_{1,0} U^{3,7}_{0,0}:\  = a_{21} U^{3,5}_{0,9} + a_{22} U^{3,7}_{0,7} + a_{23} U^{5,7}_{0,5} + \cdots, 
\\ & :U^{3,3}_{0,0} U^{3,5}_{3,0}: - :U^{3,3}_{3,0} U^{3,5}_{0,0}: \ = a_{31} U^{3,5}_{0,9} +  a_{32} U^{3,7}_{0,7} + a_{33} U^{5,7}_{0,5} + \cdots, \end{split} \end{equation}
 Again, it is straightforward to compute the matrix $[a_{ij}]$ and check that it is invertible for generic $k$. Therefore all the generators in weight $17$ can be eliminated.

Finally, in weight $18$, there are six generators from Corollary \ref{cor:longlistwsln}, namely $U^{3,3}_{0,12}$, $U^{3,5}_{0,10}$, $U^{3,7}_{0,8}$, $U^{5,5}_{0,8}$, $U^{5,7}_{0,6}$, and $U^{7,7}_{0,4}$. Five of these generators, namely $U^{3,3}_{0,12}$, $U^{3,5}_{0,10}$, $U^{3,7}_{0,8}$, $U^{5,5}_{0,8}$, and $U^{5,7}_{0,6}$, can be eliminated in $\cW^{\text{free}}(\gs\gl_7)^{\mathbb{Z}_2}$, but there is no relation allowing $U^{7,7}_{0,4}$ to be decoupled. However, in $\cW^k(\gs\gl_7)^{\mathbb{Z}_2}$ we can decouple $U^{7,7}_{0,4}$ by applying the operator $(W^4)_{(1)}$ to the weight $16$ decoupling relation for $U^{5,7}_{0,4}$, since $$(W^4)_{(1)} U^{5,7}_{0,4} = a_{4,5} U^{7,7}_{0,4} + \cdots.$$ Here we are using that $(W^4)_{(1)} W^7$ is a normally ordered polynomial in $\{L,W^i|\ 3\leq i \leq 7\}$ and their derivatives, so the remaining terms appearing in $(W^4)_{(1)} U^{5,7}_{0,4}$ do not depend on $U^{7,7}_{0,4}$. Also, it is apparent that when $(W^4)_{(1)}$ is applied to the right hand side of the decoupling relation for $U^{5,7}_{0,4}$, the term $U^{7,7}_{0,4}$ cannot appear. \end{proof}

\section{The structure of $\cW(c,\lambda)^{\mathbb{Z}_2}$}
As in Section \ref{sec:winf}, we use the generating set $\{L, W^i|\ i \geq 3\}$ given in \cite{LII}, so that $W^3_{(5)} W^3 = \frac{c}{3} 1$ and $W^i = (W^3)_{(1)} W^{i-1}$ for $i\geq 4$.

\begin{lemma} \label{lem:weakwinf} After a suitable localization of the ring $\mathbb{C}[c,\lambda]$, $\cW(c,\lambda)^{\mathbb{Z}_2}$ has a weak generating set consisting of the union of the following sets:
\begin{enumerate}
\item $\{L, W^{2i}|\ i \geq 2 \}$,
\item $\{U^{3,2i+1}_{0,0}|\ i\geq 1\}$.
\end{enumerate}
\end{lemma}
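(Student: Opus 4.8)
The plan is to reduce this to the free-field result already established. Recall from Lemma~\ref{lem:orbifold} that $\cW(c,\lambda)^{\mathbb{Z}_2}$ is (the deformable family whose limit is) $\cW^{\text{free}}(\gs\gl_n)^{\mathbb{Z}_2}$ only for fixed $n$; what we actually want is a weak generating set for the two-parameter algebra $\cW(c,\lambda)^{\mathbb{Z}_2}$ itself. The natural device is the infinite tensor product $\bigotimes_{i=1}^{\infty}\cO(1,2i)$ with its $\mathbb{Z}_2$-action $\theta(W^i)=(-1)^i W^i$, together with the Remark following Theorem~\ref{thm:longlistweakwslnfree}, which asserts that $\big(\bigotimes_{i=1}^{\infty}\cO(1,2i)\big)^{\mathbb{Z}_2}$ has minimal weak generating set $\{W^{2i},U^{3,2i+1}_{0,0}\mid i\geq 1\}$. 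The first step is therefore to exhibit $\cW(c,\lambda)^{\mathbb{Z}_2}$ as a deformable family whose large-$\kappa$ limit is this infinite free-field orbifold. Since $\cW(c,\lambda)$ is freely generated by $\{L,W^i\mid i\geq 3\}$ with $W^i=(W^3)_{(1)}W^{i-1}$, and the OPE coefficients degenerate in a controlled way under the standard rescaling of the generators, the limit of $\cW(c,\lambda)$ along a suitable one-dimensional family inside $\Spec\mathbb{C}[c,\lambda]$ is exactly $\bigotimes_{i=1}^{\infty}\cO(1,2i)$; this is implicit in how $\cC^{\psi}(n,m)$ sits inside $\cW(c,\lambda)$ and in the free-field limit \eqref{freefieldlimit}. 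Taking $\mathbb{Z}_2$-invariants and applying the deformable-family formalism of Lemma~\ref{lem:orbifold} gives the statement about the limit.

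The second step is to transport the weak generating set along the limit. By Lemma~\ref{lem:passageweak}, if $U=\{W^{2i},U^{3,2i+1}_{0,0}\mid i\geq 1\}$ weakly generates the limit, then after a suitable (at most countable) localization of $\mathbb{C}[c,\lambda]$ the lifts $\{L,W^{2i},U^{3,2i+1}_{0,0}\mid i\geq 1\}$ weakly generate $\cW(c,\lambda)^{\mathbb{Z}_2}$. One subtlety: Lemma~\ref{lem:passageweak} as stated is for a finite index set, or at least its proof produces one bad set $S$ per generator; with infinitely many generators one needs the union of countably many countable sets to still be countable, which it is, so the localization is still only a localization at countably many points. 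I would state this explicitly. The lifts here are literal: $W^{2i}$ already lies in $\cW(c,\lambda)$ by construction, and $U^{3,2i+1}_{0,0}=\,:W^3\,\partial^0 W^{2i+1}:$ is defined by \eqref{def:uijab}.

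The main obstacle — and the part deserving genuine care rather than a citation — is justifying that the large-level limit of $\cW(c,\lambda)$ along the appropriate family really is $\bigotimes_{i=1}^{\infty}\cO(1,2i)$ as a $\mathbb{Z}_2$-algebra, i.e. that after rescaling $W^i\mapsto \kappa^{-(i-2)}W^i$ (or whatever normalization makes the structure constants finite) every $(k)$-th product $W^i_{(k)}W^j$ with $i+j-2>\max(i,j)$ vanishes in the limit while the leading ``Heisenberg-type'' term $a_{i,j}$, $b_{i,j}$ of Lemma~\ref{formofab} survives and the top singular term $W^i_{(2i-1)}W^i$ becomes the scalar from $\cO(1,2i)$. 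This requires inspecting the recursive OPE algebra of \cite{LII} and the good filtration of Lemma~\ref{lem:longlistwinf} (with $\deg W^i=i$): the filtration degree estimate $\deg(W^i_{(k)}W^j)\leq i+j-2$ is exactly what forces all composite terms to be lower order and hence to die in the commutative limit, leaving only the pairings among the $W^i$ themselves. Once this is in place, the rest is a direct appeal to the already-proven infinite-rank free-field computation and to Lemmas~\ref{lem:orbifold} and~\ref{lem:passageweak}. I would also remark that, unlike Lemma~\ref{lem:longlistwinf}, the localization here is genuinely necessary, since the passage lemmas only control the family away from a countable bad set.
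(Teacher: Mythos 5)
There is a genuine gap in your first step. The deformable-family machinery you invoke (Lemma~\ref{lem:orbifold} and Lemma~\ref{lem:passageweak}) is a strictly one-parameter formalism: $\cB$ is defined over $F_K$, rational functions in a single variable $\kappa$, and the conclusion of Lemma~\ref{lem:passageweak} is weak generation of $F_S\otimes_{F_K}\cB$. If you take a limit of $\cW(c,\lambda)$ ``along a suitable one-dimensional family inside $\Spec\mathbb{C}[c,\lambda]$,'' the passage lemma only yields weak generation of the restriction of $\cW(c,\lambda)^{\mathbb{Z}_2}$ to that curve, not of the two-parameter algebra over a localization of $\mathbb{C}[c,\lambda]$, which is what the lemma asserts. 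To propagate from one curve back to $\mathbb{C}^2$ you would need an additional semicontinuity argument (weak generation in a fixed weight $d$ is the non-vanishing of finitely many minors over $\mathbb{C}[c,\lambda]$, hence an open condition), and this is not supplied. Moreover, the existence of a free-field limit of $\cW(c,\lambda)$ itself equal to $\bigotimes_{i\geq 2}\cO(1,2i)$ is nowhere established: Theorem~\ref{thm:wfreelimit} applies to $\cW^k(\gg,f)$, not to $\cW(c,\lambda)$, and your appeal to the good filtration of Lemma~\ref{lem:longlistwinf} conflates two different degenerations --- the associated graded algebra is the \emph{abelian} differential polynomial algebra, in which the pairings $W^i_{(2i-1)}W^i$ also die, not the free field algebra in which they survive. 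Finally, your worry about Lemma~\ref{lem:passageweak} requiring a finite index set is unfounded; it is stated for an arbitrary index set $I$.

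The paper's actual proof sidesteps all of this with a short specialization argument: if $V=\bigoplus_d V[d]$ is the subalgebra weakly generated by the proposed set and $V[d]\subsetneq\cW(c,\lambda)^{\mathbb{Z}_2}[d]$ for some $d$, then since for all sufficiently large $n$ the quotient map to $\cC^{\psi}(n,0)^{\mathbb{Z}_2}=\cW^k(\gs\gl_n)^{\mathbb{Z}_2}$ is an isomorphism in weight $d$ (the ideal $\cI_{n,0}$ has no elements of weight $\leq d$), the failure descends to $\cW^k(\gs\gl_n)^{\mathbb{Z}_2}$, contradicting Corollary~\ref{cor:longlistweakwslnfree}. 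In other words, the free-field limit is only ever taken for the one-parameter quotients $\cW^k(\gs\gl_n)$, where Theorem~\ref{thm:wfreelimit} actually applies, and the two-parameter statement is deduced by specializing to the (infinitely many) truncation curves rather than by degenerating $\cW(c,\lambda)$ itself. Your proposal could be repaired either by adding the minor/rank semicontinuity step, or more simply by replacing the limit of $\cW(c,\lambda)$ with the already-proven Corollary~\ref{cor:longlistweakwslnfree}.
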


\begin{proof} For each $d\geq 0$, let $\cW(c,\lambda)^{\mathbb{Z}_2}[d]$ denote the subspace of weight $d$, and let $V[d]\subseteq \cW(c,\lambda)^{\mathbb{Z}_2}[d]$ denote the span of all fields that can be written as linear combinations of words in $\{L, W^{2i}, U^{3,2i}_{0,0}\}$ and the $k^{\text{th}}$ vertex algebra products for $k\in \mathbb{Z}$. Then $V = \bigoplus_{d\geq 0} V[d]$ is exactly the subalgebra which is weakly generated by the above fields.

If $V \neq \cW(c,\lambda)^{\mathbb{Z}_2}$, there is some weight $d$ where any basis $\cW(c,\lambda)^{\mathbb{Z}_2}[d]$ contains a vector that is not in $V[d]$. Then the image of this vector would be needed in a basis of $\cW^k(\gs\gl_n)^{\mathbb{Z}_2} = \cC^{\psi}(0,n)$ for all but finitely many values of $n$. This contradicts Corollary \ref{cor:longlistweakwslnfree}. \end{proof}

\begin{thm} \label{lem:strongwinf} After a suitable localization of $\mathbb{C}[c,\lambda]$, $\cW(c,\lambda)^{\mathbb{Z}_2}$ has a strong generating set consisting of the following fields
\begin{enumerate}
\item $W^{2i}$, for $i \geq 1$,
\item $U^{3,3}_{0,2a}$, for $a = 0,1,2,3$,
\item $U^{3,2i+1}_{0,a}$, for all $i\geq 2$ and $a = 0,1,2,3,4,5,6$,
\item $U^{2i+1,2i+1}_{0,2a}$, for all $i \geq 2$ and $a = 0,1,2$,
\item $U^{2i+1,2j+1}_{0,a}$, for all $2 \leq i < j$ and $a = 0,1,2,3,4,5$.
\end{enumerate}
\end{thm}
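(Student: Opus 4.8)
The plan is to prove this exactly as Lemma~\ref{lem:weakwinf} was proved, with Corollary~\ref{cor:strongwn} in place of Corollary~\ref{cor:longlistweakwslnfree}; that is, instead of building decoupling relations by hand inside $\cW(c,\lambda)^{\mathbb{Z}_2}$, I would propagate the corresponding statement down from the one-parameter quotients $\cC^{\psi}(n,0)^{\mathbb{Z}_2}\cong\cW^k(\gs\gl_n)^{\mathbb{Z}_2}$. Write $T$ for the set in the statement, and for each $d\geq 0$ let $V[d]\subseteq\cW(c,\lambda)^{\mathbb{Z}_2}[d]$ be the span (over a suitable localization of $\mathbb{C}[c,\lambda]$, to be fixed at the end) of the normally ordered monomials $\,:\!\partial^{a_1}X_{i_1}\cdots\partial^{a_r}X_{i_r}\!:$ of weight $d$ with each $X_{i_j}\in T$. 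Then $V=\bigoplus_d V[d]$ is precisely the subalgebra strongly generated by $T$, and the task is to show $V[d]=\cW(c,\lambda)^{\mathbb{Z}_2}[d]$ for all $d$.

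Fix $d\geq 2$ (the cases $d\leq 1$ are trivial), choose $n>2d$, and choose $\psi$ generic enough that $\cW^k(\gs\gl_n)$ is simple and freely generated of type $\cW(2,3,\dots,n)$ and that Corollary~\ref{cor:strongwn} applies to it. Since $\cW(c,\lambda)$ surjects onto $\cC^{\psi}(n,0)$, so does the induced map $\pi_d$ on weight-$d$ subspaces; and since $\cW(c,\lambda)^{\mathbb{Z}_2}[d]$ is free of some rank $r_d$ over $\mathbb{C}[c,\lambda]$ while $\cC^{\psi}(n,0)^{\mathbb{Z}_2}[d]$ has dimension $r_d$ over $\mathbb{C}$ --- both being the number of $\mathbb{Z}_2$-even normally ordered monomials of weight $d$ in $L,W^3,W^4,\dots$, using that the graded maximal ideal $\cI_{n,0}$ lives in weights $\geq n+1>d$ by Theorem~\ref{c(n,m)asquotient} --- the specialization of $\pi_d$ at $\psi$ is a surjection between $\mathbb{C}$-vector spaces of equal dimension, hence an isomorphism. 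A short bookkeeping check shows that for $n>2d$ the generators of weight $\leq d$ in Corollary~\ref{cor:strongwn} are exactly the images of the weight-$\leq d$ generators in $T$ (all the upper bounds $i\leq n/2$, $i<n/2$, $j<n/2$ there being vacuous in this range), while generators of larger weight contribute nothing in weight $d$; hence $\pi_d(V[d])$ is the span of the weight-$d$ normally ordered monomials in the strong generating set of Corollary~\ref{cor:strongwn}, which by that corollary is all of $\cC^{\psi}(n,0)^{\mathbb{Z}_2}[d]$. Through the isomorphism $\pi_d$ this forces $V[d]$ to fill the fiber of $\cW(c,\lambda)^{\mathbb{Z}_2}[d]$ at $\psi$, for a cofinite set of $\psi$ and every $n>2d$.

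The main thing to get right is the passage from this to $V[d]=\cW(c,\lambda)^{\mathbb{Z}_2}[d]$ over a single localization of $\mathbb{C}[c,\lambda]$. The cokernel $Q_d=\cW(c,\lambda)^{\mathbb{Z}_2}[d]/V[d]$ is a finitely generated $\mathbb{C}[c,\lambda]$-module that vanishes at the generic point of each of the infinitely many distinct irreducible truncation curves $V(I_{n,0})$, $n>2d$; hence $Q_d$ has no free part, so it is torsion and its support is a proper closed subset $Z_d\subset\mathbb{C}^2$. Inverting equations cutting out the one-dimensional parts of all the $Z_d$ --- a countable localization, which is permitted in the sense used throughout the paper --- makes every $Q_d$ vanish, and over this localization $T$ strongly generates $\cW(c,\lambda)^{\mathbb{Z}_2}$. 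If one instead wants only a finite localization, the alternative is to imitate the inductive construction of decoupling relations in the proof of Theorem~\ref{thm:strongwnfree} directly inside $\cW(c,\lambda)^{\mathbb{Z}_2}$: establish finitely many seed relations and propagate them using suitable vertex-algebra modes, in the spirit of the operators $(U^{3,3}_{0,0})_{(1)}$, $\nu_{(1)}$, and $(W^4)_{(1)}$ used there. That route uses only a finite localization, at the cost of having to verify that the leading coefficients appearing in the propagation vanish on only finitely many curves --- which is the genuinely computational part of the argument.
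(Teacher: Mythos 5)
Your proposal is correct and follows essentially the same route as the paper: the paper's proof simply repeats the argument of Lemma \ref{lem:weakwinf} with Corollary \ref{cor:strongwn} in place of Corollary \ref{cor:longlistweakwslnfree}, i.e., it deduces strong generation of $\cW(c,\lambda)^{\mathbb{Z}_2}$ from strong generation of the quotients $\cC^{\psi}(n,0)^{\mathbb{Z}_2}\cong\cW^k(\gs\gl_n)^{\mathbb{Z}_2}$ for infinitely many $n$. Your write-up merely makes explicit the dimension count and the torsion/support argument that the paper leaves implicit.
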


\begin{proof} The proof is the same as the proof of Lemma \ref{lem:weakwinf}, using Corollary \ref{cor:strongwn} instead of Corollary \ref{cor:longlistweakwslnfree}. \end{proof}

\begin{cor} After a suitable localization of the ring $\mathbb{C}[\psi]$, $\cC^{\psi}(n,m)^{\mathbb{Z}_2}$ has a strong finite generating set
\begin{enumerate}
\item $L, W^{2i}$, for $ 2 \leq i \leq  \frac{(m+1)(m+n+1)-1}{2}$,
\item $U^{3,3}_{0,2a}$, for $a = 0,1,2,3$,
\item $U^{3,2i+1}_{0,a}$, for all $2 \leq i <  \frac{(m+1)(m+n+1)-1}{2} $ and $a = 0,1,2,3,4,5,6$,
\item $U^{2i+1,2i+1}_{0,2a}$, for all $2 \leq i <  \frac{(m+1)(m+n+1)-1}{2}$ and $a = 0,1,2$,
\item $U^{2i+1,2j+1}_{0,a}$, for all $2 \leq i < j <  \frac{(m+1)(m+n+1)-1}{2}$ and $a = 0,1,2,3,4,5$.
\end{enumerate}
\end{cor}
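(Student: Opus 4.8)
The plan is to obtain this by truncating the infinite strong generating set of $\cW(c,\lambda)^{\mathbb{Z}_2}$ furnished by Theorem~\ref{lem:strongwinf} with the help of the truncation statement Theorem~\ref{c(n,m)asquotient}. By the quotient realization \eqref{quotientrealization}, the images under $\cW(c,\lambda)^{\mathbb{Z}_2}\to\cW(c,\lambda)^{\mathbb{Z}_2}/\cI_{n,m,\overline{0}}\cong\cC^{\psi}(n,m)^{\mathbb{Z}_2}$ of the fields listed in Theorem~\ref{lem:strongwinf} form a strong generating set for $\cC^{\psi}(n,m)^{\mathbb{Z}_2}$; the localization of $\mathbb{C}[c,\lambda]$ in Theorem~\ref{lem:strongwinf} amounts to inverting finitely many polynomials, which pull back through the parametrization \eqref{cnmparametrization} to finitely many polynomials in $\psi$, so the assertion will hold after a suitable localization of $\mathbb{C}[\psi]$. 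It then remains to discard from this list the generators with weight index at least $\frac{(m+1)(m+n+1)-1}{2}$.

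Set $N=(m+1)(m+n+1)-1$. By Theorem~\ref{c(n,m)asquotient}, after localizing, $\cC^{\psi}(n,m)$ is of type $\cW(2,3,\dots,N)$, so $W^{\ell}$ is a normally ordered polynomial in $L,W^3,\dots,W^N$ and their derivatives for every $\ell>N$. Consequently every generator in the list of Theorem~\ref{lem:strongwinf} with weight index exceeding $N/2$ — that is, $W^{2i}$ with $2i>N$, and each $U^{2i+1,2j+1}_{0,a}$ with $2j+1>N$ — becomes in $\cC^{\psi}(n,m)^{\mathbb{Z}_2}$ a normally ordered polynomial in $L,W^3,\dots,W^N$. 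Being $\mathbb{Z}_2$-even, such a field is a sum of monomials each involving an even number of odd-weight factors, hence by the $\text{gr}$-argument of Lemma~\ref{lem:longlistwinf} lies in the vertex subalgebra strongly generated by the list of Corollary~\ref{cor:longlistcnm}. This reduces the problem to collapsing the (still infinite, in the index $a$) list of Corollary~\ref{cor:longlistcnm} onto the finite list of the present corollary, which is exactly the reduction effected in passing from Corollary~\ref{cor:longlistwsln} to Corollary~\ref{cor:strongwn} in the case $m=0$.

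That reduction is achieved by the decoupling relations produced in the proof of Theorem~\ref{thm:strongwnfree}, which bound the derivative index to $a\le 3$ for $U^{3,3}_{0,2a}$, to $a\le 6$ for $U^{3,2i+1}_{0,a}$, to $a\le 2$ for $U^{2i+1,2i+1}_{0,2a}$, and to $a\le 5$ for $U^{2i+1,2j+1}_{0,a}$. These are identities among the fields $U^{2i'+1,2j'+1}_{a',b'}$ valid in $\cW(c,\lambda)^{\mathbb{Z}_2}$ after the localization of Theorem~\ref{lem:strongwinf} — they were established in the free field limits $\cW^{\text{free}}(\gs\gl_r)^{\mathbb{Z}_2}$ for all $r$, hence hold in $\cW^k(\gs\gl_r)^{\mathbb{Z}_2}$ for generic $k$ and all $r$, hence in $\cW(c,\lambda)^{\mathbb{Z}_2}$ — so they descend to $\cC^{\psi}(n,m)^{\mathbb{Z}_2}$, and the inductions of the proof of Theorem~\ref{thm:strongwnfree} run verbatim to finish. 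The one point that genuinely requires checking — though it causes no trouble — is the interaction with the truncation: in $\cC^{\psi}(n,m)^{\mathbb{Z}_2}$ the field $W^{\ell}$ for $\ell>N$ is not zero but a polynomial in $L,W^3,\dots,W^N$, and one must verify that the relations used to bound $a$ involve only the odd generators $W^3,W^{2i+1},W^{2j+1}$ of weight $\le N$ for the ranges $i\le j<N/2$ occurring in Corollary~\ref{cor:longlistcnm}, so that no such substitution is triggered and no new generator is introduced; this is so, since those relations are built from a fixed triple of odd generators with explicit, parameter-independent coefficients. The remaining bookkeeping — combining the finitely many excluded values of $\psi$ from Theorem~\ref{c(n,m)asquotient}, from Theorem~\ref{lem:strongwinf}, and from the freeness of the weight-graded pieces over the localized ring into a single localization of $\mathbb{C}[\psi]$ — is routine.
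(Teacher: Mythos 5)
Your overall route is the one the paper intends: push the strong generating set of Theorem~\ref{lem:strongwinf} through the quotient map \eqref{quotientrealization}, then use Theorem~\ref{c(n,m)asquotient} together with the parity/$\gr$ argument of Lemma~\ref{lem:longlistwinf} to discard the generators built from $W^{\ell}$ with $\ell>N$. The first two paragraphs of your write-up are fine (modulo the usual looseness about whether the localization is finite or merely countable, and about the inverted polynomials not vanishing identically on the truncation curve $V(I_{n,m})$ --- points the paper itself elides).

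The gap is in the final step, and it sits exactly at the point you single out as ``genuinely requiring checking.'' You dismiss the interaction between the decoupling relations and the truncation on the grounds that those relations ``are built from a fixed triple of odd generators with explicit, parameter-independent coefficients,'' so that ``no new generator is introduced.'' That is true in the free field limit $\cW^{\text{free}}(\gs\gl_r)^{\mathbb{Z}_2}$, where the identities of the proof of Theorem~\ref{thm:strongwnfree} are exact, but it is false for the corresponding identities in $\cW(c,\lambda)^{\mathbb{Z}_2}$: passing from the limit back to the deformable family (Lemma~\ref{lem:passagestrong}) only guarantees that the relations hold modulo correction terms of lower Li-filtration degree, and these corrections do involve other strong generators of the same weight, including ones built from odd fields $W^{\ell}$ with $\ell$ possibly exceeding $N$. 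The paper's own proof of Theorem~\ref{thm:sl7min} exhibits this explicitly: the weight~$16$ relations in $\cW^k(\gs\gl_7)^{\mathbb{Z}_2}$ acquire $U^{3,7}_{0,6}$ and $U^{5,7}_{0,4}$ terms that are absent from their free-field counterparts. Consequently, when you descend a decoupling relation for, say, $U^{3,3}_{0,2a}$ with $a\geq 4$ to $\cC^{\psi}(n,m)^{\mathbb{Z}_2}$, the substitution $W^{\ell}\mapsto P(L,W^3,\dots,W^N)$ for $\ell>N$ \emph{is} triggered, and the resulting even normally ordered polynomial can reintroduce, in the same weight, precisely the generators $U^{2i'+1,2j'+1}_{0,a'}$ with out-of-range $a'$ that you are trying to eliminate. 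Your induction therefore threatens to be circular at each fixed weight: one obtains a finite linear system relating the out-of-range $U$'s of that weight to themselves, and one must argue that this system is nondegenerate for generic $\psi$ (after which a further localization suffices), or otherwise break the circularity --- for instance by a Li-degree analysis of which same-weight generators can actually occur in the corrections. This verification is missing, and the reason you give for skipping it is incorrect.
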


The above strong generating sets for $\cW(c,\lambda)^{\mathbb{Z}_2}$ and $\cC^{\psi}(n,m)^{\mathbb{Z}_2}$ are not minimal. In fact, the decoupling relations in $\cW^k(\gs\gl_7)^{\mathbb{Z}_2}$ in weights $16$, $17$, and $18$ that we found in the proof of Theorem \ref{thm:sl7min}, are the specializations of relations in $\cW(c,\lambda)^{\mathbb{Z}_2}$. So the generators $U^{3,7}_{0,6}$, $U^{5,7}_{0,4}$, $U^{5,7}_{0,5}$, and $U^{7,7}_{0,4}$ can be eliminated in $\cW(c,\lambda)^{\mathbb{Z}_2}$ and in $\cC^{\psi}(n,m)^{\mathbb{Z}_2}$ for all $n,m$. It is an interesting problem to determine the minimal strong generating sets for $\cW(c,\lambda)^{\mathbb{Z}_2}$ as well as $\cC^{\psi}(n,m)^{\mathbb{Z}_2}$.

We are now able to prove the main result in this paper.
\begin{thm} \label{thm:wt4}
Let $R$ be the localization of $\mathbb{C}[c,\lambda]$ for which Lemma \ref{lem:weakwinf} holds, and let $R'$ be the localization of $R$ obtained by inverting $\lambda$ and $-8 + 22 \lambda + 5 \lambda c$, if necessary. Then as a vertex algebra over $R'$, $\cW(c,\lambda)^{\mathbb{Z}_2}$ is generated by $W^4$.
 \end{thm}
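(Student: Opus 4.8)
Throughout, work over the localization $R$ of $\mathbb{C}[c,\lambda]$ for which Lemma \ref{lem:weakwinf} holds, and let $\cA \subseteq \cW(c,\lambda)$ denote the vertex subalgebra generated by $W^4$. Since $\theta(W^4) = W^4$, automatically $\cA \subseteq \cW(c,\lambda)^{\mathbb{Z}_2}$. By Lemma \ref{lem:weakwinf}, $\cW(c,\lambda)^{\mathbb{Z}_2}$ is weakly generated by $\{L\} \cup \{W^{2i} \mid i \geq 2\} \cup \{U^{3,2i+1}_{0,0} \mid i \geq 1\}$, so the plan is to show that, after the further localization $R'$, every one of these fields lies in $\cA$; once that is done, $\cA$ contains the subalgebra weakly generated by them, namely all of $\cW(c,\lambda)^{\mathbb{Z}_2}$.

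The base cases $L$ and $U^{3,3}_{0,0}$ are where $R'$ enters. First, $(W^4)_{(5)}W^4$ has conformal weight $2$, hence lies in $\cW(c,\lambda)[2] = \mathbb{C}[c,\lambda]\cdot L$; writing $(W^4)_{(5)}W^4 = g(c,\lambda)\,L$ and computing $g$ in the partial OPE algebra of \cite{LII} (via $W^4 = (W^3)_{(1)}W^3$ together with $(W^3)_{(5)}W^3 = \tfrac{c}{3}\,1$ and $(W^3)_{(3)}W^3 = 2L$) one finds $g \neq 0$; inverting $g$ — which the computation shows amounts to inverting $\lambda$ — places $L$, hence the whole Virasoro subalgebra and every normally ordered polynomial in $L$ and $W^4$, in $\cA$. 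Second, modulo the subalgebra of $\cA$ generated by $L$ and $W^4$, the space $\cW(c,\lambda)^{\mathbb{Z}_2}[6]$ is spanned by $W^6$ and $U^{3,3}_{0,0}$. Now $(W^4)_{(1)}W^4 = a_{4,4}\,W^6 + \beta\,U^{3,3}_{0,0} + (\text{an element already in }\cA)$, with $a_{4,4} = \tfrac{4}{5}$ by Lemma \ref{formofab}; taking this together with a second element of $\cA$ of weight $6$, for instance $\big((W^4)_{(1)}W^4\big)_{(3)}W^4$, gives a $2\times 2$ linear system for $W^6$ and $U^{3,3}_{0,0}$ whose determinant one computes to be a nonzero scalar multiple of $-8 + 22\lambda + 5\lambda c$. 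Inverting this last polynomial, the remaining factor of $R'$, gives $W^6 \in \cA$ and $U^{3,3}_{0,0} \in \cA$.

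With $L, W^4, W^6, U^{3,3}_{0,0} \in \cA$ in hand, all remaining fields are produced by induction on the conformal weight, using $(W^4)_{(1)}$ as a raising operator. By Lemma \ref{formofab}, $(W^4)_{(1)}W^{2i} = a_{4,2i}\,W^{2i+2} + \cdots$ with $a_{4,2i} \neq 0$, and, applying the non-commutative Wick formula to $U^{3,2i-1}_{0,0} = :W^3 W^{2i-1}:$, $(W^4)_{(1)}U^{3,2i-1}_{0,0} = a_{4,2i-1}\,U^{3,2i+1}_{0,0} + \cdots$ with $a_{4,2i-1} \neq 0$. Since $W^{2i+2}$ (weight $2i+2$) and $U^{3,2i+1}_{0,0}$ (weight $2i+4$) lie in different weight spaces, each is extracted from a single one of these products; the point to check at each stage is that all of the remaining $\mathbb{Z}_2$-even terms in the two products already lie in $\cA$. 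Those terms are normally ordered monomials in the strong generators of Lemma \ref{lem:longlistwinf}, and the potentially awkward ones — products $U^{2j+1,2k+1}_{0,a}$ of two odd-weight generators — are reabsorbed using the weak-generation identities behind Lemma \ref{lem:weakwinf}, such as $(U^{3,2j+1}_{0,0})_{(5)}U^{3,2k+1}_{0,0} = (\text{const})\,U^{2j+1,2k+1}_{0,0} + \cdots$ and $(U^{3,3}_{0,0})_{(3)}U^{3,3}_{0,2a} = (\text{const})\,U^{3,3}_{0,2a+2} + \cdots$ (as in the proofs of Theorems \ref{thm:longlistweakwslnfree} and \ref{thm:strongwnfree}), together with the decoupling relations of the preceding section (cf.\ the proof of Theorem \ref{thm:sl7min}). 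Because every coefficient occurring in these reductions is a constant built from the $a_{i,j}, b_{i,j}$ of Lemma \ref{formofab}, no localization beyond $R'$ is needed at any stage.

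I expect the main obstacle to be precisely this last bookkeeping: showing that the recursive reabsorption of the trailing terms really closes up over $R'$, i.e.\ that inverting the two polynomials $\lambda$ and $-8 + 22\lambda + 5\lambda c$ suffices uniformly at every level of the induction. The weight-$6$ computation is where all of the $(c,\lambda)$-dependence sits; every step after it is controlled by the parameter-independent structure constants of Lemma \ref{formofab}, which is exactly what makes a single fixed localization $R'$ possible. Carrying this through cleanly, rather than weight by weight, requires care with the recursively defined OPE of \cite{LII}, and is the crux of the argument.
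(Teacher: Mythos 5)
Your overall strategy coincides with the paper's: reduce to the weak generating set of Lemma \ref{lem:weakwinf}, obtain $L$, $W^6$, $U^{3,3}_{0,0}$ first, then climb by applying $(W^4)_{(1)}$. But there are two concrete gaps. First, the base case for $L$: the coefficient of $L$ in $(W^4)_{(5)}W^4$ is $-\tfrac{4}{3}(-125+32\lambda(2+c))$, which is \emph{not} a unit multiple of $\lambda$, so your claim that inverting it "amounts to inverting $\lambda$" is false, and the theorem does not allow you to invert $-125+32\lambda(2+c)$. The paper instead shows that $L$ lies in $\langle W^4\rangle$ with \emph{no} localization at all: on the locus $\lambda=\tfrac{125}{32(2+c)}$ where the leading coefficient vanishes, it recovers $L$ from the iterated products $(W^4)_{(5)}\big((W^4)_{(3)}W^4\big)$ and $(W^4)_{(4)}\big((W^4)_{(4)}((W^4)_{(3)}W^4)\big)$. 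The factor $\lambda$ in $R'$ actually comes from the weight-$6$ step: the $2\times 2$ determinant there is $-\tfrac{9216}{5}\,\lambda\,(-8+22\lambda+5\lambda c)$, not merely a scalar multiple of $-8+22\lambda+5\lambda c$ as you assert. Your weight-$6$ computation is otherwise the same as the paper's.

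Second, and more seriously, your induction tracks only $W^{2i}$ and $U^{3,2i+1}_{0,0}$, but $(W^4)_{(1)}U^{3,2i-1}_{0,0}$ produces $a_{4,3}\,U^{5,2i-1}_{0,0}+a_{4,2i-1}\,U^{3,2i+1}_{0,0}+\cdots$, i.e.\ two new fields in the \emph{same} weight space, and more generally one must generate the whole space $V_{2a}=\operatorname{span}\{U^{2r+1,2s+1}_{0,0}\mid 2r+2s+2=2a\}$, whose dimension grows. The paper handles this by showing the map $f:V_{2a}\to V_{2a+2}$ induced by $(W^4)_{(1)}$ is injective, doing a parity-dependent dimension count ($f$ is onto when $\dim V_{2a}=\dim V_{2a+2}$, and has one-dimensional cokernel spanned by $U^{i,i+4}_{0,0}$ otherwise), and filling the cokernel with $(W^6)_{(1)}U^{i,i}_{0,0}=2a_{6,i}U^{i,i+4}_{0,0}+\cdots$. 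Your proposed substitute --- reabsorbing $U^{2j+1,2k+1}_{0,0}$ via $(U^{3,2j+1}_{0,0})_{(5)}U^{3,2k+1}_{0,0}$ --- leans on identities proved only in the free field limit $\cW^{\text{free}}(\gs\gl_n)$; in $\cW(c,\lambda)$ these products acquire lower-filtration correction terms (including fields $\partial^c U^{2r+1,2s+1}_{0,d}$ with $d>0$ that are outside your inductive set), and you have not shown these lie in $\langle W^4\rangle$. You flag this bookkeeping as the "crux," and indeed it is exactly the part of the argument that is missing; as written the induction does not close.
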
   
 
\begin{proof} Let $\langle W^4 \rangle \subseteq \cW(c,\lambda)^{\mathbb{Z}_2}$ denote the vertex subalgebra generated by $W^4$. In view of Lemma \ref{lem:weakwinf}, we need to show that all the fields $W^{2i}$ and $W^{3,2i+1}_{0,0}$ lie in $\langle W^4 \rangle$ after inverting $\lambda$ and $-8 + 22 \lambda + 5 \lambda c$. First, we claim that $L \in \langle W^4 \rangle$ with no restrictions on $c,\lambda$, i.e., with no localization required. From Equation (A.3) of \cite{LII} we have
$$(W^4)_{(5)} W^4 = -\frac{4}{3}(-125 + 32 \lambda(2+c)) L,$$ so $L$ lies in $\langle W^4 \rangle$ as long as $-125 + 32 \lambda(2+c) \neq 0$. Note that this holds for $c = -2$ for any value of $\lambda$.

Suppose next that $c \neq -2$ and $\lambda =  \frac{125}{32 (2 + c)}$, so that $(W^4)_{(5)} W^4 = 0$ and $L$ cannot be obtained in this way. Using the OPE algebra of $\cW(c,\lambda)$ specialized along the curve $\lambda =  \frac{125}{32 (2 + c)}$, we have the following computations:
\begin{equation}\begin{split} & 
(W^4)_{(5)} \big((W^4)_{(3)} W^4\big) = \frac{1250 (43 + 9 c)}{c +2} L,
\\ &  (W^4)_{(4)}  \bigg((W^4)_{(4)} \big((W^4)_{(3)} W^4\big) \bigg) = \frac{200 (5831 + 1353 c)}{c+2} L.\end{split} \end{equation}
It follows that for all $c\neq -2$, $L\in \langle W^4 \rangle$ for all values of $\lambda$.

Next, we claim that as long as $\lambda \neq 0$ and $\lambda \neq \frac{8}{22+5c}$, both $W^6$ and $:W^3 W^3:$ lie in $\langle W^4 \rangle$. This follows from the following calculations:
\begin{equation} \begin{split} W^4_{(1)} W^4 & = \frac{4}{5} W^6 - \frac{288 \lambda}{5} :W^3 W^3: + \cdots,
\\ (W^4)_{(3)} (W^4_{(1)} W^4) &= -\frac{32}{5} (-32 + 115 \lambda + 26 \lambda c) W^6 + \frac{2304}{5} \lambda (8 + 5 \lambda + \lambda c) :W^3 W^3: + \cdots
\end{split}\end{equation} Here the remaining terms only depend on $L,W^4$ and their derivatives, and hence lie in $\langle W^4 \rangle$. We can solve for $W^6$ and $:W^3 W^3:$ separately as long as $$ \text{det}  \left[\begin{array}{rrrrrrrrr}
				 & \frac{4}{5} \qquad \qquad \qquad & &  - \frac{288 \lambda}{5}  \qquad\qquad \\
				 &  \ \ \ \ \ \  &  \\
				 & -\frac{32}{5} (-32 + 115 \lambda + 26 \lambda c)  & &  \frac{2304}{5} \lambda (8 + 5 \lambda + \lambda c)   \\
			\end{array} \right] = -\frac{9216}{5} \lambda (-8 + 22 \lambda + 5 \lambda c) \neq 0.$$ 
Note that $-8 + 22 \lambda + 5 \lambda c = 0$ is just the truncation curve for $\cW^k(\gs\gl_3)$. It is necessary to localize along this curve because the quotient $\cW^k(\gs\gl_3)^{\mathbb{Z}_2}$ of $\cW(c,\lambda)^{\mathbb{Z}_2}$ is not generated by a field in weight $4$.

Next, we have 
\begin{equation} \begin{split}(W^4)_{(1)} U^{3,3}_{0,0} & = 2 U^{3,5}_{0,0} + \frac{1}{3} (13 - 128 \lambda - 16 \lambda c) U^{3,3}_{0,2} - 8\lambda \partial^2 U^{3,3}_{0,0} -\frac{32 \lambda}{3} :L\partial^2 W^4: 
\\ & + \frac{112 \lambda}{3} :(\partial^2 L) W^4: + \frac{56 \lambda}{3} :(\partial L)(\partial W^4): +12 \lambda :( \partial^3 L) (\partial L): 
\\ & + \frac{32 \lambda}{3} :(\partial^4 L) L: + \frac{1}{6} \partial^2 W^6 + \frac{-11 + 40 \lambda + 2 \lambda c}{9}\partial^4 W^4 + \frac{149 - 688 \lambda + 40 \lambda c}{1080} \partial^6 L.
\end{split} 
\end{equation}
Since $U^{3,3}_{0,2}$, $U^{3,3}_{0,0}$, and $L$ can be generated from $W^4$, it follows that $U^{3,5}_{0,0} \in \langle W^4 \rangle$ with no further localization required.
Next, we have
$$(W^4)_{(1)} W^6 = \frac{4}{7} W^8 + \cdots,$$ where the remaining terms lie in $\langle W^4 \rangle$, so $W^8 \in \langle W^4 \rangle$.
We also have 
\begin{equation}
\begin{split} (W^4)_{(1)} W^8 & = \frac{4}{9} W^{10} -\frac{376 \lambda}{3} U^{5,5}_{0,0}  -\frac{752 \lambda}{3} U^{3,7}_{0,0} + \cdots,
\\ (W^4)_{(1)} U^{3,5}_{0,0} & =  U^{5,5}_{0,0} + \frac{2}{3} U^{3,7}_{0,0} + \cdots,
\\ (W^6)_{(1)} U^{3,3}_{0,0} & = 2 U^{3,7}_{0,0} + \cdots,\end{split} \end{equation} where the remaining terms lie in $\langle W^4 \rangle$, so $W^{10}$, $U^{5,5}_{0,0}$ and $U^{3,7}_{0,0}$ all lie in $\langle W^4 \rangle$ with no further localization required.

Let $V_{2a}$ denote the vector space spanned by $\{U^{2r+1, 2s+1}_{0,0}|\ 1\leq r \leq s,\ 2r + 2s + 2 = 2a\}$. We have an injective linear map
$$f: V_{2a} \rightarrow V_{2a+2},\qquad f(U^{2r+1, 2s+1}_{0,0} )= a_{4, 2r+1} U^{2r+3, 2s+1}_{0,0} + a_{4,2s+1} U^{2r+1, 2s+3}_{0,0},$$ which agrees with the restriction of $(W^4)_{(1)}$ to $V_{2a}$ up to terms which depend only on $W^{2j}$ for $j\leq a$ and $U^{2k+1, 2\ell+1}_{a,b}$ for $2k+2\ell + 2 \leq 2a$. Here the constants $a_{i,j}$ are given by Lemma \ref{formofab}. We now assume inductively that the fields $W^{2a}$ and the spaces $V_{2a}$ lie in $\langle W^4 \rangle$ for all $a \leq i+1$, with no further localization required. We have already checked the base cases where $i \leq 4$. Observe first that
$$(W^4)_{(1)} W^{2i+2} = a_{4,2i+2} W^{2i+4} + \cdots,$$ where the remaining terms either lie in $V_{2i+4}$, or in $\langle W^4 \rangle$ by inductive hypothesis. Since $W^{2i+4}$ cannot appear in $(W^4)_{(1)} U$ for any $U \in V_{2i+2}$, it suffices to show that $V_{2i+4}$ lies in $\langle W^4 \rangle$. Note that our inductive hypothesis also implies that $f$ agrees with the restriction of $(W^4)_{(1)}$ to $V_{2i+2}$ up to terms which lie in $\langle W^4 \rangle$.

If $i$ is even, $V_{2i+4}$ and $V_{2i+2}$ both have dimension $\frac{i}{2}$. Since $f: V_{2i+2} \rightarrow V_{2i+4}$ is injective, it is also surjective, so $V_{2i+4} \subseteq \langle W^4 \rangle$.

If $i$ is odd, $V_{2i+4}$ has dimension $\frac{i+1}{2}$ and $V_{2i+2}$ has dimension $\frac{i-1}{2}$, so the image $f(V_{2i+2}) \subseteq V_{2i+4}$ has codimension one. It is easy to check that $U^{i,i+4}_{0,0}$ spans a complement of $f(V_{2i+2})$. But
$$(W^6)_{(1)} U^{i, i}_{0,0} = 2 a_{6, i} U^{i,i+4}_{0,0} + \cdots,$$ where the remaining terms lie in $\langle W^4 \rangle$. Hence $U^{i,i+4}_{0,0}$ lies in $\langle W^4 \rangle$ as well. \end{proof}

\begin{cor} \label{cor:wt4}
After a suitable localization of $\mathbb{C}[\psi]$, $\cC^{\psi}(n,m)^{\mathbb{Z}_2}$ is generated as a vertex algebra by $W^4$ for all $n\geq 4$ and $m=0$, and all $n\geq 1$ if $m\geq 1$. In particular, this holds for $\cW^k(\gs\gl_4)^{\mathbb{Z}_2}$ for all $n\geq 4$.
 \end{cor}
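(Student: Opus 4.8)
The plan is to obtain this by specializing Theorem~\ref{thm:wt4} along the truncation curve. Recall from~\eqref{quotientrealization} that, for the pairs $(n,m)$ in question, $\cC^{\psi}(n,m)^{\mathbb{Z}_2}\cong\cW(c,\lambda)^{\mathbb{Z}_2}/\cI_{n,m,\overline{0}}$, where $c$ and $\lambda$ are specialized along the truncation curve $V(I_{n,m})$ via~\eqref{cnmparametrization}, and the field $W^4$ of $\cW(c,\lambda)^{\mathbb{Z}_2}$ maps to the field $W^4$ of $\cC^{\psi}(n,m)^{\mathbb{Z}_2}$. By Theorem~\ref{thm:wt4}, $\cW(c,\lambda)^{\mathbb{Z}_2}$ is generated by $W^4$ over the ring $R'$, which is $\mathbb{C}[c,\lambda]$ localized at $\lambda$, at $-8+22\lambda+5\lambda c$, and at the finitely many elements needed for Lemma~\ref{lem:weakwinf}. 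Since being generated by a single element passes to quotients and to base change of the coefficient ring, the corollary reduces to checking that, for each admissible $(n,m)$, the parametrization~\eqref{cnmparametrization} carries every element inverted in forming $R'$ to a rational function of $\psi$ that is not identically zero: granting that, one inverts these (countably many) rational functions in $\mathbb{C}[\psi]$, and over the resulting localization the identities proving Theorem~\ref{thm:wt4} specialize to exhibit $\cC^{\psi}(n,m)^{\mathbb{Z}_2}$ as generated by $W^4$.

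Thus the work is a nonvanishing check: for each element $f$ inverted in $R'$, the irreducible curve $V(I_{n,m})$ must not be contained in the hypersurface $V(f)$ (if it merely meets $V(f)$ in a finite set, the corresponding $\psi$ are discarded). For $f=\lambda$ this is immediate, since the numerator of $\lambda(\psi)$ in~\eqref{cnmparametrization} is $-(\psi-1)\psi$, which is not identically zero. For $f=-8+22\lambda+5\lambda c$, note (as already observed in the proof of Theorem~\ref{thm:wt4}) that $V(f)$ is the truncation curve of $\cW^k(\gs\gl_3)=\cC^{\psi}(3,0)$, which has minimal strong generating type $\cW(2,3)$; since the truncation curve determines this type, $V(I_{n,m})=V(f)$ would force $\cC^{\psi}(n,m)$ to have type $\cW(2,3)$ as well, i.e.\ $(m+1)(m+n+1)-1=3$ by Theorem~\ref{c(n,m)asquotient}. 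But in the admissible range $(m+1)(m+n+1)>4$: if $m\geq 1$ and $n\geq 1$ then it is at least $2\cdot 3=6$, while if $m=0$ and $n\geq 4$ it equals $n+1\geq 5$. Hence this case never occurs, which is precisely why these restrictions are imposed --- $\cW^k(\gs\gl_3)^{\mathbb{Z}_2}$ is of type $\cW(2,6,8,10,12)$ and is not generated by a weight-$4$ field.

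The main obstacle is the remaining inverted elements, those coming from the (non-explicit) localization of Lemma~\ref{lem:weakwinf}. I would dispose of these by the descent principle already used in the paragraph following~\eqref{quotientrealization}: after passing to a suitable, possibly $(n,m)$-dependent, localization of $\mathbb{C}[\psi]$, a free basis of each weight space $\cW(c,\lambda)^{\mathbb{Z}_2}[d]$ descends to a basis of $\cC^{\psi}(n,m)^{\mathbb{Z}_2}[d]$, so in particular the weak generating set of Lemma~\ref{lem:weakwinf} descends to a weak generating set of $\cC^{\psi}(n,m)^{\mathbb{Z}_2}$ --- which is exactly the assertion that $V(I_{n,m})$ is not contained in the Lemma~\ref{lem:weakwinf} locus. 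For $m=0$ and $n\geq 4$ this descent is already recorded in Corollary~\ref{cor:longlistweakwslnfree}; for $m\geq 1$ one reruns the argument of Lemma~\ref{lem:weakwinf} with $\cC^{\psi}(n,m)^{\mathbb{Z}_2}$ in place of $\cC^{\psi}(0,n)^{\mathbb{Z}_2}$, using the strong finite generating set of the corollary following Theorem~\ref{lem:strongwinf}. Once this bookkeeping is in place the three cases of the check are complete, and the corollary follows; the final assertion about $\cW^k(\gs\gl_n)^{\mathbb{Z}_2}$ for $n\geq 4$ is the special case $m=0$.
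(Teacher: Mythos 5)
Your argument is correct and is essentially the paper's own (implicit) route: Corollary \ref{cor:wt4} is stated there without proof as an immediate specialization of Theorem \ref{thm:wt4} along the truncation curves \eqref{cnmparametrization}, and your nonvanishing checks --- in particular that $V(I_{n,m})$ cannot coincide with the $\cW^k(\gs\gl_3)$ curve $-8+22\lambda+5\lambda c=0$ for the stated range of $(n,m)$, and that the implicit localization of Lemma \ref{lem:weakwinf} is compatible with the curve --- supply exactly the details the paper leaves unstated. One small imprecision: for a fixed $(n,m)$ a free basis of $\cW(c,\lambda)^{\mathbb{Z}_2}[d]$ descends to a basis of $\cC^{\psi}(n,m)^{\mathbb{Z}_2}[d]$ only for $d$ below the weight of the first singular vector (the paper's ``all but finitely many $(n,m)$'' is for fixed $d$), but this does not affect your argument, since what is actually needed is only that images of weak generators under the quotient map weakly generate.
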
   

It is an interesting problem to determine the minimal localization $R'$ of $\mathbb{C}[c,\lambda]$ needed for Theorem \ref{thm:wt4} to hold, and similarly to determine the minimal localization of $\mathbb{C}[\psi]$ needed for Corollary \ref{cor:wt4} to hold. Suppose that $S$ is a localization of $\mathbb{C}[c,\lambda]$ for which $W^4$ generates $W^6$ and all fields $U^{3,3}_{0,2a}$ for $a\geq 0$. It is not difficult to show using the form of $a_{i,j}, b_{i,j}$ given by Lemma \ref{formofab} that no further localization is required, that is, we can take $R' = S$. Moreover, we expect that $S$ will be the localization of $\mathbb{C}[c,\lambda]$ along finitely many polynomials. This would imply that for each $m,n$, the localization needed for Corollary \ref{cor:wt4} to hold will require inverting finitely polynomials in $\psi$; equivalently, $W^4$ will generate $\cC^{\psi}(n,m)^{\mathbb{Z}_2}$ for all but finitely many values of $\psi$. We hope to return to this question later.


\begin{thebibliography}{ABKS}
\bibitem[AI]{AI} T. Arakawa, \textit{Associated varieties of modules over Kac-Moody algebras and $C_2$-cofiniteness of $\cW$-algebras}, Int. Math. Res. Not. IMRN 2015, no. 22, 11605-11666.
\bibitem[AII]{AII} T. Arakawa, \textit{Rationality of $\cW$-algebras: principal nilpotent cases}, Ann. Math. vol. 182, no. 2 (2015), 565-604.
\bibitem[ACL]{ACL} T. Arakawa, T. Creutzig, and A. Linshaw,  \textit{W-algebras as coset vertex algebras}, Invent. Math. 218, no. 1 (2019), 145-195.
\bibitem[AL]{AL} M. Al-Ali and A. Linshaw,  \textit{The $\mathbb{Z}_{2}$-orbifold of the $\cW_{3}$-algebra}, Comm. Math. Phys. 353, No. 3 (2017), 1129-1150.
\bibitem[ALY]{ALY} T. Arakawa, C. H. Lam, and H. Yamada, \textit{Parafermion vertex operator algebras and W-algebras}, Trans. Amer. Math. Soc. 371 (2019), no. 6, 4277-4301.
\bibitem[B-H]{B-H} R. Blumenhagen, W. Eholzer, A. Honecker, K. Hornfeck, and R. Hubel, \textit{Coset realizations of unifying W-algebras}, Int. J. Mod. Phys. Lett. A 10, 2367-2430 (1995)
\bibitem[BFM]{BFM}  M. Bershtein, B. Feigin, and G. Merzon, \textit{Plane partitions with a \lq\lq pit": generating functions and representation theory}, Selecta Math. (N.S.) 24 (2018), no. 1, 21-62. 
\bibitem[CLI]{CLI} T. Creutzig and A. Linshaw, \textit{The super $\cW_{1+\infty}$-algebra with integral central charge}, Trans. Am. Math.Soc. 367 (2015), 5521-5551.
\bibitem[CLII]{CLII}T. Creutzig and A. Linshaw, \textit{Cosets of affine vertex algebras inside larger structures}, J. Algebra 517 (2019), 396-438.
\bibitem[CLIII]{CLIII} T. Creutzig and A. Linshaw, \textit{Trialities of $\mathcal{W}$-algebras},  Cambridge J. Math. Vol 10, no. 1 (2022), 69-194.
\bibitem[FFI]{FFI} B. Feigin and E. Frenkel, \textit{Quantization of the Drinfeld-Sokolov reduction}, Phys. Lett. B, 246(1-2):75-81, 1990.
\bibitem[FFII]{FFII} B. Feigin and E. Frenkel, \textit{Duality in W-algebras}, Int. Math. Res. Notices 1991 (1991), no. 6, 75-82.
\bibitem[GR]{GR} D. Gaiotto and M. Rap\v{c}\'ak, \textit{Vertex Algebras at the Corner}, J. High Energy Phys. 2019, no. 1, 160, front matter+85 pp.
\bibitem[JWI]{JWI} C. Jiang and Q. Wang, \textit{Representations of $\mathbb{Z}_2$-orbifold of the parafermion vertex operator algebra $K(\mathfrak{sl}_2,k)$,} J. Algebra 529 (2019), 174-195.
\bibitem[JWII]{JWII} C. Jiang and Q. Wang, \textit{Fusion rules for $\mathbb{Z}_2$-orbifolds of affine and parafermion vertex operator algebras}, Israel J. Math. 240 (2020), no. 2, 837-887.
\bibitem[KRW]{KRW} V. Kac, S. Roan, and M. Wakimoto, \textit{Quantum reduction for affine superalgebras}, Comm. Math. Phys. 241 (2003), no. 2-3, 307-342.
\bibitem[KW]{KW} V. Kac and M. Wakimoto, \textit{Quantum reduction and representation theory of superconformal algebras}. Adv. Math. 185 (2004), no. 2, 400-458. 
\bibitem[KL]{KL} S. Kanade and A. Linshaw, \textit{Universal two-parameter even spin $\cW_{\infty}$-algebra}, Adv. Math. 355 (2019), 106774, 58 pp.
\bibitem[Li]{Li} H. Li, \textit{Vertex algebras and vertex Poisson algebras}, Commun. Contemp. Math. 6 (2004) 61-110.
\bibitem[LI]{LI} A. Linshaw, \textit{Invariant subalgebras of affine vertex algebras}, Adv. Math. 234 (2013), 61-84.
\bibitem[LII]{LII} A. Linshaw, \textit{Universal two-parameter $\mathcal{W}_{\infty}$-algebra and vertex algebras of type $\mathcal{W}(2,3,\dots,N)$}, Compos. Math. 157, no.1 (2021),12-82. 
\bibitem[RSYZ]{RSYZ} M. Rap\v{c}\'ak, Y. Soibelman, Y. Yang, and G. Zhao, \textit{Cohomological Hall algebras, vertex algebras, and instantons}, Comm. Math. Phys. 376 (2020), no. 3, 1803-1873.
\bibitem[Z]{Z} Y. C. Zhu, \textit{Modular invariants of characters of vertex operators}, J. Amer. Math. Soc. 9 (1996) 237-302.

\end{thebibliography}
\end{document}